\documentclass{cimart}

\usepackage{enumitem}

\DeclareMathOperator{\Imag}{Im}
\DeclareMathOperator{\Ker}{Ker}
\DeclareMathOperator{\Sym}{Sym}

\theoremstyle{cmdefinition}
\newtheorem{notation}{Notation}

\title{Orthogonal idempotents in symmetric tensor powers of composition algebras}

\authors{Aharon Razon}

\authorinfo{Elta Systems Ltd., Israel}{razona@elta.co.il}

\abstract{%
    We explicitly find a complete set of $\frac{1}{4}(n+2)^2$ (resp.\ $\frac{1}{4}(n+1)(n+3)$)
    primitive orthogonal idempotents in $\Sym^n\mathbb{H}\otimes_\mathbb{R}\mathbb{C}$ if $n$ is even (resp.\ odd),
    where $\Sym^n\mathbb{H}$ is the $n$-th symmetric power of the Hamilton quaternion algebra $\mathbb{H}$.
    We also give a complete set of $\frac{1}{4}(n+2)^2$ (resp.\ $\frac{1}{8}(n+1)(n+3)$)
    primitive orthogonal idempotents in $\Sym^n\mathbb{H}$ if $n$ is even (resp.\ odd).
    Moreover, we explicitly find a complete set of $\frac{1}{24}(n+2)(n+3)(n+4)$ (resp.\ $\frac{1}{24}(n+1)(n+3)(n+5)$)
    primitive orthogonal idempotents in the associative subalgebra 
    $\big(\Sym^n\mathbb{H}\cdot Z(\Sym^n\mathbb{O})\big)\otimes_\mathbb{R}\mathbb{C}$
    of $\Sym^n\mathbb{O}\otimes_\mathbb{R}\mathbb{C}$ if $n$ is even (resp.\ odd),
    where $\Sym^n\mathbb{O}$ is the $n$-th symmetric power of the Cayley octonion algebra $\mathbb{O}$
    and $Z(\Sym^n\mathbb{O})$ is its center.
    We also give a complete set of $\frac{1}{24}(n+2)(n+3)(n+4)$ (resp.\ $\frac{1}{48}(n+1)(n+3)(n+5)$)
    primitive orthogonal idempotents in the associative subalgebra 
    $\Sym^n\mathbb{H}\cdot Z(\Sym^n\mathbb{O})$
    of $\Sym^n\mathbb{O}$ if $n$ is even (resp.\ odd).
}

\keywords{
    Symmetric tensor powers, Quaternion algebra, Octonion algebra, Composition algebra.
    }

\msc{17A75 (primary); 16W10 (secondary).}

\VOLUME{34}
\YEAR{2026}
\ISSUE{1}
\NUMBER{13}
\DOI{https://doi.org/10.46298/cm.18005}
\editinfo{April 14, 2026}{June 17, 2026}{Adam Chapman}
\acknowledgments{The author thanks the anonymous referees for some helpful remarks.}

\begin{document}

\section*{Introduction}

Let $\mathcal{C}$ be either the Hamilton quaternion algebra $\mathbb{H}$ or the Cayley octonion algebra $\mathbb{O}$ 
over the field of real numbers $\mathbb{R}$.
Then $\mathcal{C}$ is a composition algebra of dimension 4 or 8, respectively, over $\mathbb{R}$.
For each positive integer $n$,
$\Sym^n\mathcal{C}$ is a direct sum of central simple algebras over $\mathbb{R}$ 
\cite[Prop.\ 3.6(f), Prop.\ 3.9(b), and Prop.\ 3.12(a)]{Raz24}:
\begin{equation}
\label{eq:decomp}
\Sym^n\mathcal{C}=\bigoplus_{m=\left\lceil \frac{n}{2} \right\rceil}^n\mathbb{S}_m^{(n)}\mathcal{C}
\end{equation}
such that $\mathbb{S}_m^{(n)}\mathcal{C}\cong T_{2m-n}\mathcal{C}:=\mathbb{S}_{2m-n}^{(2m-n)}\mathcal{C}$ 
for each integer $m$ between $\left\lceil \frac{n}{2} \right\rceil$ and $n$.
Here $\Sym^n\mathcal{C}$ is the ring of symmetric tensors of $\mathcal{C}^{\otimes n}$, 
i.e. the fixed ring of $\mathcal{C}^{\otimes n}$ under the action of the symmetric group $\mathfrak{S}_n$ defined by
$\sigma(x_1\otimes\cdots\otimes x_n)=x_{\sigma^{-1}(1)}\otimes\cdots\otimes x_{\sigma^{-1}(n)}$ for $\sigma\in\mathfrak{S}_n$ and
$x_1,\dots,x_n\in\mathcal{C}$.
For $x\in\mathcal{C}^{\otimes n}$ we denote $x^\vee:=\frac{1}{n!}\sum_{\sigma\in\mathfrak{S}_n}\sigma(x)$.

Section 3 of \cite{Raz24} gives an explicit construction of an orthogonal system of idempotents:
for each integer $m$ between $\left\lceil \frac{n}{2} \right\rceil$ and $n$, a central idempotent 
$e_{m,\mathcal{C}}^{(n)}\in\Sym^n\mathcal{C}$ is constructed
such that $\mathbb{S}_m^{(n)}\mathcal{C}=\Sym^n\mathcal{C}\cdot e_{m,\mathcal{C}}^{(n)}$, 
$e_{m,\mathcal{C}}^{(n)}e_{m',\mathcal{C}}^{(n)}=0$ if $m\ne m'$, and
$\sum_{m=\left\lceil \frac{n}{2} \right\rceil}^ne_{m,\mathcal{C}}^{(n)}=1^{\otimes n}$
\cite[Lemma 3.4 and Prop.\ 3.6(c)]{Raz24}.
In particular, $e_{m,\mathcal{C}}^{(n)}$ commutes and associates with all elements of $\Sym^n\mathcal{C}$.

In this work we continue the work done in \cite{Raz21,  Raz24, Raz25}
and study further the structure of the simple components
$\mathbb{S}_m^{(n)}\mathcal{C}\cong T_{2m-n}\mathcal{C}$ of the decomposition \eqref{eq:decomp} of $\Sym^n\mathcal{C}$.
We embed the quaternion algebra $\mathbb{H}$ inside the octonion algebra $\mathbb{O}$, so $\Sym^n\mathbb{H}\subseteq\Sym^n\mathbb{O}$.
The aim of the current study is to give explicitly a complete set of primitive orthogonal idempotents 
(see Section~\ref{I})
in $\Sym^n\mathbb{H}\cdot e_{m,\mathcal{C}}^{(n)}$ and in $\big(\Sym^n\mathbb{H}\otimes_\mathbb{R}\mathbb{C}\big)\cdot e_{m,\mathcal{C}}^{(n)}$,
where $\mathbb{C}$ is the field of complex numbers.
Note that $\Sym^n\mathbb{H}\cdot e_{m,\mathbb{H}}^{(n)}=\mathbb{S}^{(n)}_m\mathbb{H}$
and $\Sym^n\mathbb{H}\cdot e_{m,\mathbb{O}}^{(n)}$ is an associative subalgebra of $\Sym^n\mathbb{O}$.
Using \eqref{eq:decomp}, this will give a complete set of primitive orthogonal idempotents in 
$\bigoplus_{m=\left\lceil \frac{n}{2} \right\rceil}^n\Sym^n\mathbb{H}\cdot e_{m,\mathcal{C}}^{(n)}$ and in
$\bigoplus_{m=\left\lceil \frac{n}{2} \right\rceil}^n\big(\Sym^n\mathbb{H}\otimes_\mathbb{R}\mathbb{C}\big)\cdot e_{m,\mathcal{C}}^{(n)}$.

Section 6 of \cite{Raz24} studies the case $\mathcal{C}=\mathbb{H}$, where we have by the Wedderburn--Artin theorem
that the central simple algebra $T_n\mathbb{H}$ is isomorphic
either to a matrix algebra over $\mathbb{R}$ or to a matrix algebra over $\mathbb{H}$:
$T_n\mathbb{H} \cong M_{n+1}(\mathbb{R})$, if $n$ is even,
and $T_n\mathbb{H} \cong M_{\frac{n+1}{2}}(\mathbb{H})$, if $n$ is odd \cite[Thm.\ 6.2]{Raz24}.
In the ring of $n\times n$ matrices $M_n(D)$ over a division ring $D$,
the standard diagonal matrices $E_{ii}$ (with $1$ at position $(i,i)$ and $0$ elsewhere)
form a complete set of primitive orthogonal idempotents.
In particular, for each integer $\ell$ between $\left\lceil \frac{n}{2} \right\rceil$ and $n$,
the size of a complete set of primitive orthogonal idempotents in 
$\mathbb{S}_\ell^{(n)}\mathbb{H} \cong T_{2\ell-n}\mathbb{H}$ is $2\ell-n+1$ (resp.\ $\frac{2\ell-n+1}{2}$)
if $n$ is even (resp.\ odd). Therefore, the size of a complete set of primitive orthogonal idempotents in 
$\mathbb{S}_\ell^{(n)}\mathbb{H}\otimes_\mathbb{R}\mathbb{C}$ is $2\ell-n+1$.

In order to describe explicitly a complete set of primitive orthogonal idempotents in 
$\mathbb{S}_\ell^{(n)}\mathbb{H}$ and in $\mathbb{S}_\ell^{(n)}\mathbb{H}\otimes_\mathbb{R}\mathbb{C}$,
we shall need some notation.
Let $e_0=1,e_1,e_2,e_3$ be a basis of $\mathbb{H}$ over $\mathbb{R}$ such that
$e_1^2=e_2^2=e_3^2=-1$, $e_1e_2=-e_2e_1=e_3$, $e_2e_3=-e_3e_2=e_1$, $e_3e_1=-e_1e_3=e_2$.
Let $\sqrt{-1}$ be a root of $X^2+1=0$ in $\mathbb{C}$.
We denote
\[
a := \frac{1}{2}(1+\sqrt{-1}e_1)\in\mathbb{H}\otimes_\mathbb{R}\mathbb{C}
\]
and let
\[
a^c := \frac{1}{2}(1-\sqrt{-1}e_1)
\]
be its complex conjugate.
We also denote
\[
\square:=\frac{1}{2}(1\otimes1+e_1\otimes e_1)\in\Sym^2\mathbb{H}.
\]

\begin{theorem}[{Theorem~\ref{Theorem1}}]
\label{TheoremA}
Let $\ell$ be an integer between $\left\lceil \frac{n}{2} \right\rceil$ and $n$.
\begin{enumerate}[label={\rm(\alph*)},leftmargin=*]
\item The set
\[
\left\{
\binom{n}{k}
\big(a^{\otimes k}\otimes (a^c)^{\otimes(n-k)}\big)^\vee
e^{(n)}_{\ell,\mathbb{H}}
\;\middle|\;
k \in \{n-\ell,\ldots,\ell\}
\right\}
\]
is a complete set of primitive orthogonal idempotents in
$\mathbb{S}_\ell^{(n)}\mathbb{H}\otimes_\mathbb{R}\mathbb{C}$.

\item
If $n$ is odd, then
\begin{align*}
\binom{n}{k}
\frac{n-2k}{2^k}
\sum_{i=0}^{\frac{n-1}{2}-k}
\left(-\frac{1}{2}\right)^i
\frac{\binom{n-2k-i}{i}}{n-2k-i}
\big(1^{\otimes(n-2k-2i)}\otimes\square^{\otimes(k+i)}\big)^\vee
\cdot e^{(n)}_{\ell,\mathbb{H}}
\,,
\end{align*}
$k\in\{n-\ell,\ldots,\frac{n-1}{2}\}$,
form a complete set of primitive orthogonal idempotents in $\mathbb{S}_\ell^{(n)}\mathbb{H}$.

\item
If $n$ is even, then
\[
\left(\frac{1}{2}\right)^{\frac{n}{2}}
\binom{n}{\frac{n}{2}}
\big(\square^{\otimes \frac{n}{2}}\big)^\vee
\cdot e_{\ell,\mathbb{H}}^{(n)}.
\]

together with
\begin{align*}
\binom{n}{k}
\frac{\frac{n}{2}-k}{2^k}
\sum_{i=0}^{\left\lfloor \frac{n}{4}-\frac{k}{2} \right\rfloor}
\left(-\frac{1}{4}\right)^i
\frac{\binom{\frac{n}{2}-k-i}{i}}{\frac{n}{2}-k-i}
&\big(\square^{\otimes(k+2i)}\otimes(1\otimes1-\square)^{\otimes(\frac{n}{2}-k-2i)}\big)^\vee\\
&\cdot \frac{1}{2}\big(1^{\otimes n}+\delta\cdot e_2^{\otimes n}\big)\cdot e^{(n)}_{\ell,\mathbb{H}}\,,
\end{align*}
$k\in\{n-\ell,\ldots,\frac{n}{2}-1\}$, $\delta\in\{-1,1\}$,
if $\ell>\frac{n}{2}$,
form a complete set of primitive orthogonal idempotents in $\mathbb{S}_\ell^{(n)}\mathbb{H}$.
\end{enumerate}
\end{theorem}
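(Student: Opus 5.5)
The plan is to work in $\Sym^n\mathbb{H}\otimes_\mathbb{R}\mathbb{C}\cong\Sym^n M_2(\mathbb{C})$ and use Schur--Weyl duality for the complex statement (a). The real statements (b) and (c) will then follow by Galois descent together with a Waring-type identity for the explicit coefficients.

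For (a), first note that $a,a^c$ are orthogonal idempotents in $\mathbb{H}\otimes\mathbb{C}$ with $a+a^c=1$. Put
\[
f_k:=\binom{n}{k}\big(a^{\otimes k}\otimes(a^c)^{\otimes(n-k)}\big)^\vee .
\]
This is the sum, over all $k$-subsets $S\subseteq\{1,\dots,n\}$, of the elementary tensor with $a$ in the slots of $S$ and $a^c$ elsewhere. Distinct such tensors are products of slotwise orthogonal idempotents, so the $f_k$ are orthogonal idempotents with $\sum_k f_k=1^{\otimes n}$. Since $e^{(n)}_{\ell,\mathbb{H}}$ is central, the products $f_ke^{(n)}_{\ell,\mathbb{H}}$ are orthogonal idempotents summing to $e^{(n)}_{\ell,\mathbb{H}}$.

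To identify which of them are nonzero and primitive, identify $\mathbb{H}\otimes\mathbb{C}\cong\mathrm{End}(V)$ with $\dim V=2$, sending $a$ and $a^c$ to the projections onto the two eigenlines of $\sqrt{-1}e_1$. Then $\Sym^n\mathrm{End}(V)=\mathrm{End}_{\mathfrak{S}_n}(V^{\otimes n})$. By Schur--Weyl duality this is $\bigoplus_m\mathrm{End}(S_{(m,n-m)}V)$, with $\dim S_{(m,n-m)}V=2m-n+1$. Comparing with \eqref{eq:decomp} (via the dimensions $2\ell-n+1$ recalled above), $e^{(n)}_{\ell,\mathbb{H}}$ is the projection onto the $\lambda=(\ell,n-\ell)$ summand. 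Under this identification $f_k$ is the projection onto the torus weight space of weight $2k-n$. In $S_{(\ell,n-\ell)}V\cong\Sym^{2\ell-n}V\otimes\det^{n-\ell}$ these weight spaces are one-dimensional exactly for $n-\ell\le k\le\ell$ and zero otherwise. Hence $f_ke^{(n)}_{\ell,\mathbb{H}}$ has rank one for those $k$, so it is primitive, and it vanishes for the other $k$. This proves (a).

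For (b) and (c), complex conjugation swaps $a$ and $a^c$, so it sends $f_k$ to $f_{n-k}$. Thus $g_k:=f_k+f_{n-k}$ (for $k<n/2$) and $f_{n/2}$ are real. When $n$ is odd, $g_ke^{(n)}_{\ell,\mathbb{H}}$ for $n-\ell\le k\le\frac{n-1}2$ gives $\frac{2\ell-n+1}{2}$ orthogonal idempotents. This is the number of primitive idempotents in $M_{\frac{2\ell-n+1}2}(\mathbb{H})$, so they are primitive.

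It remains to express $g_k$ in terms of $\square$. A direct computation gives $a\otimes a^c+a^c\otimes a=\square$ and $a\otimes a+a^c\otimes a^c=1\otimes1-\square$. Now pair slots. In the commutative algebra of symmetrized products of $a,a^c$, the count $g_k$ (respectively $f_{n/2}$) becomes a symmetric function of two "variables" $x,y$ (one per eigenline) in which the term $x^{n-2k}+y^{n-2k}$ appears, together with $xy\leftrightarrow\square$ and $x+y\leftrightarrow 1$. Waring's formula
\[
x^m+y^m=\sum_i(-1)^i\frac{m}{m-i}\binom{m-i}{i}(x+y)^{m-2i}(xy)^i
\]
with $m=n-2k$ should produce exactly the coefficients in (b), once the symmetrization normalizations $\binom{n}{k}$ and $2^{-k}$ are tracked. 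For $n$ even the same argument with $m=\frac n2-k$, applied to the pair-variables $\square$ and $1\otimes1-\square$ (whose "$xy$" carries the factor $\frac14$), gives the displayed sum. Likewise $f_{n/2}=2^{-n/2}\binom{n}{n/2}(\square^{\otimes n/2})^\vee$.

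For $n$ even the algebra is $M_{2\ell-n+1}(\mathbb{R})$, so the rank-two idempotents $g_ke^{(n)}_{\ell,\mathbb{H}}$ must still be split. Since $e_2e_1e_2^{-1}=-e_1$, conjugation by $e_2^{\otimes n}$ swaps $a$ and $a^c$. Also $(e_2^{\otimes n})^2=(-1)^n=1$ and $e_2^{\otimes n}$ is symmetric. Therefore $e_2^{\otimes n}$ commutes with $g_k$, interchanges the two lines $\operatorname{Im}f_k$ and $\operatorname{Im}f_{n-k}$, and $\frac12(1^{\otimes n}\pm e_2^{\otimes n})g_k$ are rank-one real idempotents. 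The count $1+2(\ell-\frac n2)=2\ell-n+1$ then gives completeness and primitivity.

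The main obstacle I expect is the bookkeeping in the Waring-identity step. One has to verify that symmetrizing products of $1$ and $\square$ in $\Sym^n$ matches monomials in $x+y$ and $xy$ with precisely the stated factors, and to handle the edge index $i$ where $m-i$ could vanish. I would check this first on generating functions $\sum_k f_kt^k=\big((a+ta^c)^{\otimes n}\big)$, restricted to the symmetric subalgebra.
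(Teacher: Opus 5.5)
Your proof is correct. For part (a) and for the splitting in (c) it takes a genuinely different route from the paper; your part (b) and the Waring rewriting are essentially the paper's.

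\textbf{How the paper argues.} The paper never splits $\mathbb{H}\otimes_\mathbb{R}\mathbb{C}$. For (a) with $\ell=n$, it proves that $r\mapsto r\,e^{(n)}_{n,\mathbb{H}}$ is injective on $\Sym^n\mathbb{C}[e_1]$ (Lemma~\ref{isomCe}, via the local--global criterion Proposition~\ref{LocalGlobal} and Alon's Lemma~\ref{Noga}), then counts against $M_{n+1}(\mathbb{C})$. For $\ell<n$ it substitutes the closed formula \eqref{emnH} and uses $(a\otimes a)\triangle_\mathbb{H}=0=(a^c\otimes a^c)\triangle_\mathbb{H}$ and $\square\triangle_\mathbb{H}=\triangle_\mathbb{H}$ to reduce to $T_{2\ell-n}\mathbb{H}_\mathbb{C}$; primitivity comes only from counting. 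In (c) it shows $\frac12(1^{\otimes n}\pm e_2^{\otimes n})g_ke^{(n)}_{\ell,\mathbb{H}}\neq0$ by the same reduction, Proposition~\ref{LocalGlobal}, and a nonconstant polynomial in $\alpha_2$.

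\textbf{What your route buys.} The Schur--Weyl picture makes the range $n-\ell\le k\le\ell$ transparent: it is the weight support of $\Sym^{2\ell-n}V\otimes\det^{n-\ell}$. Primitivity becomes rank one rather than a count. Nonvanishing in (c) is immediate, but state the step explicitly: $e_2^{\otimes n}$ restricts to an involution of the plane spanned by the images of $f_ke^{(n)}_{\ell,\mathbb{H}}$ and $f_{n-k}e^{(n)}_{\ell,\mathbb{H}}$ in $S_{(\ell,n-\ell)}V$. It exchanges these two distinct lines, so it is not $\pm\mathrm{id}$, and both of its eigenlines are nonzero. Note that the lines live in $S_{(\ell,n-\ell)}V$; the images of $f_k$ in $V^{\otimes n}$ are not lines. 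The price is importing the splitting $\mathbb{H}_\mathbb{C}\cong\mathrm{End}(V)$ and Schur--Weyl duality, which have no octonionic analogue. The paper's intrinsic tools, such as Lemma~\ref{zeroIntersection}, are the ones it reuses in Section~\ref{IV}.

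\textbf{Two precision points.} First, your block identification rests on two facts: central primitive idempotents are unique, and the block sizes $2\ell-n+1$ are pairwise distinct. There is also a direct alternative. Under the splitting, $\triangle_\mathbb{H}\mapsto\frac12(1-P)$, where $P$ is the flip of $V\otimes V$. Hence $\triangle^{(n)}_\mathbb{H}$ acts on the $(m,n-m)$-isotypic part by $\frac{(n-m)(m+1)}{n(n-1)}=\beta^{(n)}_{m,\mathbb{H}}$, which identifies $e^{(n)}_{m,\mathbb{H}}$ without Proposition~\ref{matrixProp}.

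Second, in the Waring step the dictionary, in the commutative product $(u,v)\mapsto(u\otimes v)^\vee$, is $x=a$, $y=a^c$, $x+y=1$ and $xy=(a\otimes a^c)^\vee=\frac12\square$ (not $\square$). With it, $g_k=\binom nk(xy)^k(x^{n-2k}+y^{n-2k})$. For $n$ even, apply Waring to $x=a\otimes a$ and $y=a^c\otimes a^c$, so that $x+y=1\otimes1-\square$ and $xy=\frac14(\square\otimes\square)^\vee$. With these the coefficients in (b) and (c) come out exactly, and no index $m-i$ vanishes in either range.
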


Using the decomposition \eqref{eq:decomp}, the size of a complete set of primitive orthogonal idempotents 
in $\Sym^n\mathbb{H}\otimes_\mathbb{R}\mathbb{C}$ is
\[
\sum_{\ell=\left\lceil \frac{n}{2} \right\rceil}^n (2\ell-n+1),
\]
which is $\frac{1}{4}(n+2)^2$ (resp.\ $\frac{1}{4}(n+1)(n+3)$)
if $n$ is even (resp.\ odd).
Also, the size of a complete set of primitive orthogonal idempotents in $\Sym^n\mathbb{H}$ is
\[
\sum_{\ell=\frac{n}{2}}^n (2\ell-n+1) = \frac{1}{4}(n+2)^2
\]
(resp.\ 
\[
\sum_{\ell=\frac{n+1}{2}}^n \frac{2\ell-n+1}{2} = \frac{1}{8}(n+1)(n+3))\]
if $n$ is even (resp.\ odd).
The following corollary of Theorem~\ref{TheoremA}
describes explicitly a complete set of primitive orthogonal idempotents in 
$\Sym^n\mathbb{H}$ and in $\Sym^n\mathbb{H}\otimes_\mathbb{R}\mathbb{C}$.

\begin{corollary}[{Corollary~\ref{Corollary2}}]
\label{CorollaryB}
\begin{enumerate}[label={\rm(\alph*)},leftmargin=*]
\item
The set
\[
\left\{
\binom{n}{k}\bigl(a^{\otimes k}\otimes(a^c)^{\otimes(n-k)}\bigr)^\vee e^{(n)}_{\ell,\mathbb{H}}
\;\middle|\;
\begin{aligned}
&k\in\{n-\ell,\ldots,\ell\},\\
&\ell\in\left\{\left\lceil \frac{n}{2} \right\rceil,\ldots,n\right\}
\end{aligned}
\right\}
\]
is a complete set of primitive orthogonal idempotents in $\Sym^n\mathbb{H}\otimes_\mathbb{R}\mathbb{C}$.

\item
If $n$ is odd, then
\begin{align*}
\binom{n}{k}\frac{n-2k}{2^k}
\sum_{i=0}^{\frac{n-1}{2}-k}(-\frac{1}{2})^i
\frac{\binom{n-2k-i}{i}}{n-2k-i}
\big(1^{\otimes(n-2k-2i)}\otimes\square^{\otimes(k+i)}\big)^\vee\cdot e^{(n)}_{\ell,\mathbb{H}}\,,
\end{align*}
$k\in\{n-\ell,\ldots,\frac{n-1}{2}\}$, $\ell\in\left\{\frac{n+1}{2},\ldots,n\right\}$,
form a complete set of primitive orthogonal idempotents in $\Sym^n\mathbb{H}$.

\item
If $n$ is even, then
\[
\left(\frac{1}{2}\right)^{\frac{n}{2}}\binom{n}{\frac{n}{2}}
\big(\square^{\otimes\frac{n}{2}}\big)^\vee\cdot e_{\frac{n}{2},\mathbb{H}}^{(n)}
\]
and
\[
\left(\frac{1}{2}\right)^{\frac{n}{2}}\binom{n}{\frac{n}{2}}
\big(\square^{\otimes\frac{n}{2}}\big)^\vee\cdot e_{\ell,\mathbb{H}}^{(n)}
\]
together with
\begin{align*}
\binom{n}{k}\frac{\frac{n}{2}-k}{2^k}
\sum_{i=0}^{\frac{n}{4}-\lfloor\frac{k}{2}\rfloor}
\left(-\frac{1}{4}\right)^i
\frac{\binom{\frac{n}{2}-k-i}{i}}{\frac{n}{2}-k-i}
&\big(\square^{\otimes(k+2i)}\otimes(1\otimes1-\square)^{\otimes(\frac{n}{2}-k-2i)}\big)^\vee\\
&\cdot\frac{1}{2}\big(1^{\otimes n}+\delta\cdot e_2^{\otimes n}\big)\cdot e^{(n)}_{\ell,\mathbb{H}}\,,
\end{align*}
$k\in\{n-\ell,\ldots,\frac{n}{2}-1\}$, $\delta\in\{-1,1\}$, $\ell\in\left\{\frac{n}{2}+1,\ldots,n\right\}$,
form a complete set of primitive orthogonal idempotents in $\Sym^n\mathbb{H}$.
\end{enumerate}
\end{corollary}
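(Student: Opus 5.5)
The plan is to derive Corollary~\ref{CorollaryB} directly from Theorem~\ref{TheoremA} together with the decomposition \eqref{eq:decomp} and the properties of the central idempotents $e^{(n)}_{\ell,\mathbb{H}}$ recalled from \cite[Lemma 3.4 and Prop.\ 3.6(c)]{Raz24}. The general principle is elementary. Suppose $A=\bigoplus_\ell A e_\ell$, where the $e_\ell$ are central orthogonal idempotents summing to $1$. Suppose further that for each $\ell$ we have a complete set $\{f_{\ell,j}\}_j$ of primitive orthogonal idempotents of $Ae_\ell$ (with identity $e_\ell$). Then the union $\{f_{\ell,j}\}_{\ell,j}$ is a complete set of primitive orthogonal idempotents of $A$.

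First I will verify this principle in our setting.
\begin{enumerate}[label={\rm(\roman*)},leftmargin=*]
\item \emph{Orthogonality.} Within a fixed $\ell$ this is given by Theorem~\ref{TheoremA}. For $\ell\neq\ell'$ we have $f_{\ell,j}=f_{\ell,j}e_\ell$ and $f_{\ell',j'}=f_{\ell',j'}e_{\ell'}$. By centrality, and since associativity holds because $\Sym^n\mathbb{H}$ is associative,
\[
f_{\ell,j}f_{\ell',j'}=f_{\ell,j}f_{\ell',j'}e_\ell e_{\ell'}=0 .
\]
\item \emph{Completeness.} We have $\sum_{\ell,j}f_{\ell,j}=\sum_\ell e_\ell=1^{\otimes n}$.
\item \emph{Primitivity.} Let $f$ be one of the $f_{\ell,j}$. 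Any decomposition $f=g+h$ into orthogonal idempotents in $A$ satisfies $g=fgf\in fAf=f(Ae_\ell)f$, and similarly for $h$. So it is a decomposition inside $Ae_\ell$, where $f$ is primitive; hence $g=0$ or $h=0$.
\end{enumerate}
The same argument applies verbatim after tensoring with $\mathbb{C}$, since $e^{(n)}_{\ell,\mathbb{H}}\otimes1$ remains central in $\Sym^n\mathbb{H}\otimes_\mathbb{R}\mathbb{C}$ and $\mathbb{S}^{(n)}_\ell\mathbb{H}\otimes_\mathbb{R}\mathbb{C}=(\Sym^n\mathbb{H}\otimes_\mathbb{R}\mathbb{C})e^{(n)}_{\ell,\mathbb{H}}$.

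Next I will apply the principle part by part. For (a), I take the union over $\ell\in\{\lceil n/2\rceil,\dots,n\}$ of the sets in Theorem~\ref{TheoremA}(a). For (b), with $n$ odd, $\lceil n/2\rceil=\frac{n+1}{2}$, and I take the union over $\ell$ of the families in Theorem~\ref{TheoremA}(b). For (c), with $n$ even, I will split into two cases. For $\ell=\frac{n}{2}$, the family in Theorem~\ref{TheoremA}(c) consists of the single element $(\frac12)^{n/2}\binom{n}{n/2}(\square^{\otimes n/2})^\vee e^{(n)}_{n/2,\mathbb{H}}$. This is consistent with $\mathbb{S}^{(n)}_{n/2}\mathbb{H}\cong T_0\mathbb{H}\cong\mathbb{R}$, so the element must equal $e^{(n)}_{n/2,\mathbb{H}}$ itself. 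For $\ell>\frac{n}{2}$, I use the $\square$-term together with the $\delta$-family. Collecting these gives exactly the list in (c).

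The only step requiring care is a bookkeeping check. The summation range printed in the corollary, $\frac{n}{4}-\lfloor\frac k2\rfloor$, must agree with the range $\lfloor\frac n4-\frac k2\rfloor$ from the theorem. I plan to compare the two ranges directly. Any discrepancy only adds values of $i$ for which $\frac n2-k-2i<0$ or the binomial coefficient $\binom{\frac n2-k-i}{i}$ vanishes, so those terms are zero and the two expressions coincide. Finally, I will compare the count of idempotents with the sums computed before the corollary as a consistency check.
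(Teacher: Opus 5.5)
Your proposal is correct and follows the same route as the paper, which deduces the corollary from Theorem~\ref{Theorem1} via Corollary~\ref{completeCorollary} applied to the central idempotents $e^{(n)}_{\ell,\mathbb{H}}$; your $fAf$ argument for primitivity is just a variant of the paper's left-ideal argument. One small correction to your bookkeeping: the upper limit $\frac{n}{4}-\lfloor\frac{k}{2}\rfloor$ is a misprint for $\lfloor\frac{n}{4}-\frac{k}{2}\rfloor$ (as in Theorem~\ref{Theorem1}(c)), and when $4\mid n$ and $k$ is odd the extra index does not give a zero term but an undefined one (exponent $-1$ on $1\otimes1-\square$, and for $k=\frac{n}{2}-1$ a coefficient of the form $\frac{0}{0}$), so that index should simply be dropped rather than argued to vanish.
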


Now we consider the associative subalgebra $\Sym^n\mathbb{H}\cdot e^{(n)}_{m,\mathbb{O}}$ of $\Sym^n\mathbb{O}$
and the associative subalgebra $(\Sym^n\mathbb{H}\otimes_\mathbb{R}\mathbb{C})\cdot e^{(n)}_{m,\mathbb{O}}$ of 
$\Sym^n\mathbb{O}\otimes_\mathbb{R}\mathbb{C}$,
for each integer $m$ between $\left\lceil \frac{n}{2} \right\rceil$ and $n$.
Using the decomposition \eqref{eq:decomp} for $\mathcal{C}=\mathbb{H}$,
\[
\Sym^n\mathbb{H}\cdot e^{(n)}_{m,\mathbb{O}}
=\bigoplus_{\ell=\left\lceil \frac{n}{2} \right\rceil}^n\Sym^n\mathbb{H}\cdot e^{(n)}_{\ell,\mathbb{H}}\cdot e^{(n)}_{m,\mathbb{O}}.
\]
In Corollary~\ref{MainLemma} we show that if $\ell$ is an integer between $\left\lceil \frac{n}{2} \right\rceil$ and $m$, then
$\Sym^n\mathbb{H}\cdot e^{(n)}_{\ell,\mathbb{H}}\cong\Sym^n\mathbb{H}\cdot e^{(n)}_{\ell,\mathbb{H}}\cdot e^{(n)}_{m,\mathbb{O}}$
by $x\mapsto x\cdot e^{(n)}_{m,\mathbb{O}}$.
Moreover, $e^{(n)}_{\ell,\mathbb{H}}\cdot e^{(n)}_{m,\mathbb{O}}=0$ if $\ell>m$.
It follows that the size of a complete set of primitive orthogonal idempotents in 
$(\Sym^n\mathbb{H}\otimes_\mathbb{R}\mathbb{C})\cdot e^{(n)}_{m,\mathbb{O}}$ is
$\sum_{\ell=\left\lceil \frac{n}{2} \right\rceil}^m(2\ell-n+1)$ which is
${\frac{1}{4}}(2m-n+2)^2$ (resp.\ ${\frac{1}{4}}(2m-n+1)(2m-n+3)$) if $n$ is even (resp.\ odd).
Also, the size of a complete set of primitive orthogonal idempotents in
$\Sym^n\mathbb{H}\cdot e^{(n)}_{m,\mathbb{O}}$ is:
\[
    \begin{cases}
        \sum_{\ell=\frac{n}{2}}^m(2\ell-n+1)=\frac{1}{4}(2m-n+2)^2, 
        & \text{if $n$ is even}, \\
        \sum_{\ell=\frac{n+1}{2}}^m \frac{2\ell-n+1}{2}=\frac{1}{8}(2m-n+1)(2m-n+3),
        & \text{if $n$ is odd}.
    \end{cases}
\]
The following theorem describes explicitly a complete set of primitive orthogonal idempotents in 
$(\Sym^n\mathbb{H}\otimes_\mathbb{R}\mathbb{C})\cdot e^{(n)}_{m,\mathbb{O}}$ and in $\Sym^n\mathbb{H}\cdot e^{(n)}_{m,\mathbb{O}}$.

\begin{theorem}[{Theorem~\ref{Theorem3}}]
\label{TheoremC}
Let $m$ be an integer between $\left\lceil \frac{n}{2} \right\rceil$ and $n$.
\begin{enumerate}[label={\rm(\alph*)},leftmargin=*]
\item
The set
\[
\left\{
\binom{n}{k}\bigl(a^{\otimes k}\otimes(a^c)^{\otimes(n-k)}\bigr)^\vee 
e^{(n)}_{\ell,\mathbb{H}}e^{(n)}_{m,\mathbb{O}}
\;\middle|\;
\begin{aligned}
&k\in\{n-\ell,\ldots,\ell\},\\
&\ell\in\left\{\left\lceil \frac{n}{2} \right\rceil,\ldots,m\right\}
\end{aligned}
\right\}
\]
is a complete set of primitive orthogonal idempotents in $(\Sym^n\mathbb{H}\otimes_\mathbb{R}\mathbb{C})\cdot e^{(n)}_{m,\mathbb{O}}$.

\item
If $n$ is odd, then
\begin{align*}
\binom{n}{k}\frac{n-2k}{2^k}
\sum_{i=0}^{\frac{n-1}{2}-k}(-\frac{1}{2})^i
\frac{\binom{n-2k-i}{i}}{n-2k-i}
\big(1^{\otimes(n-2k-2i)}\otimes\square^{\otimes(k+i)}\big)^\vee\cdot e^{(n)}_{\ell,\mathbb{H}}e^{(n)}_{m,\mathbb{O}}\,,
\end{align*}
$k\in\{n-\ell,\ldots,\frac{n-1}{2}\}$, $\ell\in\left\{\frac{n+1}{2},\ldots,m\right\}$,
form a complete set of primitive orthogonal idempotents in $\Sym^n\mathbb{H}\cdot e^{(n)}_{m,\mathbb{O}}$.

\item
If $n$ is even, then
\[
\left(\frac{1}{2}\right)^{\frac{n}{2}}\binom{n}{\frac{n}{2}}
\big(\square^{\otimes\frac{n}{2}}\big)^\vee\cdot e_{\frac{n}{2},\mathbb{H}}^{(n)}e_{m,\mathbb{O}}^{(n)}
\]
and
\[
\left(\frac{1}{2}\right)^{\frac{n}{2}}\binom{n}{\frac{n}{2}}
\big(\square^{\otimes\frac{n}{2}}\big)^\vee\cdot e_{\ell,\mathbb{H}}^{(n)}e_{m,\mathbb{O}}^{(n)}
\]
together with
\begin{align*}
\binom{n}{k}\frac{\frac{n}{2}-k}{2^k}
\sum_{i=0}^{\left\lfloor \frac{n}{4}-\frac{k}{2}\right\rfloor}
\left(-\frac{1}{4}\right)^i
\frac{\binom{\frac{n}{2}-k-i}{i}}{\frac{n}{2}-k-i}
&\big(\square^{\otimes(k+2i)}\otimes(1\otimes1-\square)^{\otimes(\frac{n}{2}-k-2i)}\big)^\vee\\
&\cdot\frac{1}{2}\big(1^{\otimes n}+\delta\cdot e_2^{\otimes n}\big)\cdot e^{(n)}_{\ell,\mathbb{H}}e^{(n)}_{m,\mathbb{O}}\,,
\end{align*}
$k\in\{n-\ell,\ldots,\frac{n}{2}-1\}$, $\delta\in\{-1,1\}$, $\ell\in\left\{\frac{n}{2}+1,\ldots,m\right\}$,
if $m>\frac{n}{2}$,
form a complete set of primitive orthogonal idempotents in $\Sym^n\mathbb{H}\cdot e^{(n)}_{m,\mathbb{O}}$.
\end{enumerate}
\end{theorem}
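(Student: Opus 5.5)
Theorem~\ref{TheoremC} will be deduced from Theorem~\ref{TheoremA} (equivalently Corollary~\ref{CorollaryB}) by transporting idempotents along the isomorphism announced in Corollary~\ref{MainLemma}. We use that corollary in two ways. First, for $\lceil\frac n2\rceil\le\ell\le m$ the map $\varphi_\ell\colon x\mapsto x\cdot e^{(n)}_{m,\mathbb{O}}$ is an algebra isomorphism $\Sym^n\mathbb{H}\cdot e^{(n)}_{\ell,\mathbb{H}}\to\Sym^n\mathbb{H}\cdot e^{(n)}_{\ell,\mathbb{H}}e^{(n)}_{m,\mathbb{O}}$. Second, $e^{(n)}_{\ell,\mathbb{H}}e^{(n)}_{m,\mathbb{O}}=0$ for $\ell>m$.

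Since $e^{(n)}_{m,\mathbb{O}}$ is central and associates with all of $\Sym^n\mathbb{O}$, hence with $\Sym^n\mathbb{H}$, the map $x\mapsto xe^{(n)}_{m,\mathbb{O}}$ is a ring homomorphism $\Sym^n\mathbb{H}\to\Sym^n\mathbb{H}\cdot e^{(n)}_{m,\mathbb{O}}$. It is onto, sends $1^{\otimes n}$ to $e^{(n)}_{m,\mathbb{O}}$, and is $\mathbb{R}$-linear, so it extends $\mathbb{C}$-linearly after $\otimes_\mathbb{R}\mathbb{C}$. The $e^{(n)}_{\ell,\mathbb{H}}$ are central orthogonal idempotents summing to $1^{\otimes n}$. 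Hence the elements $e^{(n)}_{\ell,\mathbb{H}}e^{(n)}_{m,\mathbb{O}}$ are central orthogonal idempotents in the target, and they sum to $e^{(n)}_{m,\mathbb{O}}$. By the vanishing part of Corollary~\ref{MainLemma}, only $\ell\le m$ contribute. This gives the decomposition
\[
\Sym^n\mathbb{H}\cdot e^{(n)}_{m,\mathbb{O}}=\bigoplus_{\ell=\lceil n/2\rceil}^{m}\Sym^n\mathbb{H}\cdot e^{(n)}_{\ell,\mathbb{H}}e^{(n)}_{m,\mathbb{O}}
\]
into two-sided ideals, and likewise after tensoring with $\mathbb{C}$. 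Each summand is isomorphic via $\varphi_\ell$ to $\mathbb{S}^{(n)}_\ell\mathbb{H}$, respectively to $\mathbb{S}^{(n)}_\ell\mathbb{H}\otimes_\mathbb{R}\mathbb{C}$; we tensor $\varphi_\ell$ with $\mathbb{C}$, which preserves bijectivity.

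Next we use two general facts. An algebra isomorphism sends a complete set of primitive orthogonal idempotents to another such set, because primitivity, orthogonality and the sum being the unit are all algebraic properties. In a finite direct product of algebras, the union of complete sets of primitive orthogonal idempotents of the factors is such a set for the product. So it suffices to apply $\varphi_\ell$ to the sets listed in Theorem~\ref{TheoremA}(a),(b),(c) for each $\ell\le m$ and take the union. Each listed element has the form $y\cdot e^{(n)}_{\ell,\mathbb{H}}$ with $y\in\Sym^n\mathbb{H}$ (or its complexification). Its image is $y\,e^{(n)}_{\ell,\mathbb{H}}e^{(n)}_{m,\mathbb{O}}$. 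We need associativity to drop brackets here. This holds because all factors lie in $\Sym^n\mathbb{H}$, which is associative, except $e^{(n)}_{m,\mathbb{O}}$, which associates with everything.

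The result is exactly the displayed families of Theorem~\ref{TheoremC}. In part (c), the $\ell=\frac n2$ summand contributes the single element $(\frac12)^{n/2}\binom{n}{n/2}(\square^{\otimes n/2})^\vee e^{(n)}_{n/2,\mathbb{H}}e^{(n)}_{m,\mathbb{O}}$. For each $\ell$ with $\frac n2<\ell\le m$, which requires $m>\frac n2$, we get the remaining families. As a consistency check, the counts $\sum_{\ell\le m}(2\ell-n+1)$ and its halved odd analogue match the sizes computed before the theorem.

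The only real content is Corollary~\ref{MainLemma}, which we assume. The main point needing care is the nonassociative ambient algebra $\Sym^n\mathbb{O}$. We must check that multiplication by $e^{(n)}_{m,\mathbb{O}}$ is multiplicative on $\Sym^n\mathbb{H}$, i.e.\ $(xe)(ye)=(xy)e$ with $e=e^{(n)}_{m,\mathbb{O}}$. This follows from $e$ being a central idempotent lying in the nucleus of $\Sym^n\mathbb{O}$, as recalled in the introduction.
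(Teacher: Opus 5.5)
Your proposal is correct and follows the paper's proof closely. You decompose $\Sym^n\mathbb{H}\cdot e^{(n)}_{m,\mathbb{O}}$ along the central idempotents $e^{(n)}_{\ell,\mathbb{H}}e^{(n)}_{m,\mathbb{O}}$, discard $\ell>m$, transport the sets of Theorem~\ref{TheoremA} through the isomorphisms of Corollary~\ref{MainLemma}, and glue them via Corollary~\ref{completeCorollary}. Your explicit check that $x\mapsto x\,e^{(n)}_{m,\mathbb{O}}$ is multiplicative despite nonassociativity is a detail the paper leaves implicit.
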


It follows that the size of a complete set of primitive orthogonal idempotents 
in \begin{center}$\bigoplus_{m=\frac{n}{2}}^n(\Sym^n\mathbb{H}\otimes_\mathbb{R}\mathbb{C})\cdot e^{(n)}_{m,\mathbb{O}}$\end{center} is
\[\begin{cases}
    \sum_{m=\frac{n}{2}}^n\frac{1}{4}(2m-n+2)^2=\frac{1}{24}(n+2)(n+3)(n+4),
    & \text{if $n$ is even}, \\
    \sum_{m=\frac{n+1}{2}}^n\frac{1}{4}(2m-n+1)(2m-n+3)=\frac{1}{24}(n+1)(n+3)(n+5),
    & \text{if $n$ is odd}.
\end{cases}\]
Also, the size of a complete set of primitive orthogonal idempotents in 
$\bigoplus_{m=\frac{n}{2}}^n\Sym^n\mathbb{H}\cdot e^{(n)}_{m,\mathbb{O}}$ is
\[\begin{cases}
    \sum_{m=\frac{n}{2}}^n\frac{1}{4}(2m-n+2)^2=\frac{1}{24}(n+2)(n+3)(n+4),
    & \text{if $n$ is even}, \\
    \sum_{m=\frac{n+1}{2}}^n\frac{1}{8}(2m-n+1)(2m-n+3)=\frac{1}{48}(n+1)(n+3)(n+5),
    & \text{if $n$ is odd}.
\end{cases}\]
By \eqref{eq:decomp}, applied to $\mathcal{C}=\mathbb{O}$, the center $Z(\Sym^n\mathbb{O})$ of $\Sym^n\mathbb{O}$
is $\bigoplus_{m=\left\lceil \frac{n}{2} \right\rceil}^n\mathbb{R}\cdot e^{(n)}_{m,\mathbb{O}}$,
since the $\mathbb{S}^{(n)}_m\mathbb{O}$'s are central algebras over $\mathbb{R}$.
Hence, $\Sym^n\mathbb{H}\cdot Z(\Sym^n\mathbb{O})
=\bigoplus_{m=\left\lceil \frac{n}{2} \right\rceil}^n\Sym^n\mathbb{H}\cdot e^{(n)}_{m,\mathbb{O}}$.
The following corollary of Theorem~\ref{TheoremC}
describes explicitly a complete set of primitive orthogonal idempotents in 
$\Sym^n\mathbb{H}\cdot Z(\Sym^n\mathbb{O})$
and in $\big(\Sym^n\mathbb{H}\cdot Z(\Sym^n\mathbb{O})\big)\otimes_\mathbb{R}\mathbb{C}$.

\begin{corollary}[{Corollary~\ref{Corollary4}}]
\label{CorollaryD}
\begin{enumerate}[label={\rm(\alph*)},leftmargin=*]
\item
The set
\[
\left\{
\binom{n}{k}\bigl(a^{\otimes k}\otimes(a^c)^{\otimes(n-k)}\bigr)^\vee 
e^{(n)}_{\ell,\mathbb{H}}e^{(n)}_{m,\mathbb{O}}
\;\middle|\;
\begin{aligned}
&k\in\{n-\ell,\ldots,\ell\},\\
&\ell\in\left\{\left\lceil \frac{n}{2} \right\rceil,\ldots,m\right\},\\
&m\in\left\{\left\lceil \frac{n}{2} \right\rceil,\ldots,n\right\}
\end{aligned}
\right\}
\]
is a complete set of primitive orthogonal idempotents in 
$\big(\Sym^n\mathbb{H}\cdot Z(\Sym^n\mathbb{O})\big)\otimes_\mathbb{R}\mathbb{C}$.

\item
If $n$ is odd, then
\begin{align*}
\binom{n}{k}\frac{n-2k}{2^k}
\sum_{i=0}^{\frac{n-1}{2}-k}\left(-\frac{1}{2}\right)^i
\frac{\binom{n-2k-i}{i}}{n-2k-i}
\big(1^{\otimes(n-2k-2i)}\otimes\square^{\otimes(k+i)}\big)^\vee\cdot 
e^{(n)}_{\ell,\mathbb{H}}e^{(n)}_{m,\mathbb{O}}\,,
\end{align*}
$k\in\{n-\ell,\ldots,\frac{n-1}{2}\}$, $\ell\in\left\{\frac{n+1}{2},\ldots,m\right\}$, $m\in\left\{\frac{n+1}{2},\ldots,n\right\}$,
form a complete set of primitive orthogonal idempotents in 
$\Sym^n\mathbb{H}\cdot Z(\Sym^n\mathbb{O})$.

\item
If $n$ is even, then
\[
\left(\frac{1}{2}\right)^{\frac{n}{2}}
\binom{n}{\frac{n}{2}}
\bigl(\square^{\otimes \frac{n}{2}}\bigr)^\vee
\cdot
e_{\frac{n}{2},\mathbb{H}}^{(n)}e_{m,\mathbb{O}}^{(n)},
\]
for $m\in\left\{\frac{n}{2},\ldots,n\right\}$,
and
\[
\left(\frac{1}{2}\right)^{\frac{n}{2}}
\binom{n}{\frac{n}{2}}
\bigl(\square^{\otimes \frac{n}{2}}\bigr)^\vee
\cdot
e_{\ell,\mathbb{H}}^{(n)}e_{m,\mathbb{O}}^{(n)}
\]
together with
\begin{align*}
\binom{n}{k}\frac{\frac{n}{2}-k}{2^k}
\sum_{i=0}^{\left\lfloor \frac{n}{4}-\frac{k}{2} \right\rfloor}
\left(-\frac{1}{4}\right)^i
\frac{\binom{\frac{n}{2}-k-i}{i}}{\frac{n}{2}-k-i}
&\big(\square^{\otimes(k+2i)}\otimes(1\otimes1-\square)^{\otimes(\frac{n}{2}-k-2i)}\big)^\vee\\
&\cdot\frac{1}{2}\big(1^{\otimes n}+\delta\cdot e_2^{\otimes n}\big)\cdot e^{(n)}_{\ell,\mathbb{H}}e^{(n)}_{m,\mathbb{O}}\,,
\end{align*}
$k\in\{n-\ell,\ldots,\frac{n}{2}-1\}$, $\delta\in\{-1,1\}$, $\ell\in\left\{\frac{n}{2}+1,\ldots,m\right\}$,
$m\in\left\{\frac{n}{2}+1,\ldots,n\right\}$,
form a complete set of primitive orthogonal idempotents in 
$\Sym^n\mathbb{H}\cdot Z(\Sym^n\mathbb{O})$.
\end{enumerate}
\end{corollary}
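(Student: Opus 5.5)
The plan is to deduce Corollary~\ref{CorollaryD} directly from Theorem~\ref{TheoremC}, using the decomposition of the center of $\Sym^n\mathbb{O}$ recalled just above.

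\textbf{Step 1: block decomposition.} By \eqref{eq:decomp} for $\mathcal{C}=\mathbb{O}$, the $e^{(n)}_{m,\mathbb{O}}$, $m=\lceil n/2\rceil,\dots,n$, are central idempotents of $\Sym^n\mathbb{O}$. They commute and associate with everything, are pairwise orthogonal, and sum to $1^{\otimes n}$. Hence the associative algebra
\[
A:=\Sym^n\mathbb{H}\cdot Z(\Sym^n\mathbb{O})=\bigoplus_{m}\Sym^n\mathbb{H}\cdot e^{(n)}_{m,\mathbb{O}}
\]
is a direct product of the ideals $A_m:=\Sym^n\mathbb{H}\cdot e^{(n)}_{m,\mathbb{O}}$, with unit $e^{(n)}_{m,\mathbb{O}}$ in $A_m$. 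The same holds after tensoring with $\mathbb{C}$.

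\textbf{Step 2: general fact about products.} In a finite direct product $A=\prod_m A_m$ of unital associative algebras, a complete set of primitive orthogonal idempotents of $A$ is obtained as the union, over $m$, of such sets for the $A_m$. Each of the three properties is checked separately.
\begin{itemize}
\item \emph{Orthogonality:} elements lying in different factors multiply to zero, since $e^{(n)}_{m,\mathbb{O}}e^{(n)}_{m',\mathbb{O}}=0$ for $m\neq m'$.
\item \emph{Completeness:} the sum of all the idempotents is $\sum_m e^{(n)}_{m,\mathbb{O}}=1^{\otimes n}$.
\item \emph{Primitivity in $A$:} any idempotent $f\le e$ with $e\in A_m$ satisfies $f=efe\in A_m$. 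So primitivity in $A_m$ implies primitivity in $A$.
\end{itemize}

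\textbf{Step 3: apply Theorem~\ref{TheoremC}.} Apply Theorem~\ref{TheoremC} for each $m$ and take the union of the resulting sets.
\begin{itemize}
\item Parts (a) and (b) give exactly the sets listed in Corollary~\ref{CorollaryD}(a),(b), once $m$ ranges over its values.
\item In part (c), for $m=\frac n2$ only the single element $\left(\frac12\right)^{\frac n2}\binom{n}{\frac n2}(\square^{\otimes\frac n2})^\vee e^{(n)}_{\frac n2,\mathbb{H}}e^{(n)}_{\frac n2,\mathbb{O}}$ occurs. For $m>\frac n2$, the remaining families with $\ell\in\{\frac n2+1,\dots,m\}$ are added.
\end{itemize}
Collecting these over $m\in\{\frac n2,\dots,n\}$ reproduces the list in (c).

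\textbf{Main obstacle.} The only point needing care is bookkeeping in (c). One must match the index ranges of Theorem~\ref{TheoremC}(c) with those in the corollary, in particular the element with $e^{(n)}_{\frac n2,\mathbb{H}}$ for every $m$ and the case distinction $m>\frac n2$. As a consistency check, the resulting cardinalities should agree with the sums computed before the corollary.
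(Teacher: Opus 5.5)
Your proposal is correct and follows the paper's route. The paper likewise uses that the $e^{(n)}_{m,\mathbb{O}}$ are orthogonal central idempotents summing to $1^{\otimes n}$, and invokes Corollary~\ref{completeCorollary} to assemble the sets from Theorem~\ref{Theorem3}. Your Step~2 just reproves that corollary, using the direct argument $f=efe\in A_m$ for primitivity where the paper argues via indecomposability of $Re$ and Lemma~\ref{indecomposable}.
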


\section[Primitive orthogonal idempotents]{Complete set of primitive orthogonal idempotents}
\label{I}

Let $R$ be an associative ring, not necessarily commutative, with a unit $1$.
An element $x\in R$ is an {\bf idempotent} if $x^2=x$.
A set of idempotents $\{e_i\}$ is said to be {\bf orthogonal} if $e_ie_j=0$ for all $i\ne j$.
Clearly, the sum of two orthogonal idempotents is also an idempotent.
Also, if $e$ is any idempotent, then $e$ and $1-e$ are orthogonal idempotents.
The key result is the following.

\begin{lemma}[see {\cite[p.\ 119, 1st par.]{CuR81}}]
\label{KeyIdem}
There is a $1\!\!-\!\!1$ correspondence between a decomposition $R=I_1\oplus\ldots\oplus I_n$
as a direct sum of left ideals and
orthogonal idempotents $e_1,\ldots,e_n$ such that $e_1+\ldots+e_n=1$.
\end{lemma}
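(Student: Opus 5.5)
To prove Lemma~\ref{KeyIdem}, I will write down both correspondences explicitly and then check that they are mutually inverse.

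\emph{From idempotents to ideals.} Given orthogonal idempotents $e_1,\ldots,e_n$ with $e_1+\ldots+e_n=1$, set $I_j:=Re_j$. Each $I_j$ is a left ideal.
\begin{itemize}
\item They span $R$: for $r\in R$ we have $r=r\cdot 1=\sum_j re_j$.
\item The sum is direct: suppose $\sum_j r_je_j=0$ with $r_je_j\in I_j$. Right-multiply by $e_i$. Orthogonality gives $e_je_i=0$ for $j\ne i$, and idempotence gives $e_i^2=e_i$, so we get $r_ie_i=0$.
\end{itemize}
Hence $R=I_1\oplus\cdots\oplus I_n$.

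\emph{From ideals to idempotents.} Given a decomposition $R=I_1\oplus\cdots\oplus I_n$ into left ideals, write $1=e_1+\cdots+e_n$ uniquely with $e_j\in I_j$.
\begin{itemize}
\item For any $x\in I_i$, write $x=x\cdot 1=\sum_j xe_j$. Since $I_j$ is a left ideal, $xe_j\in I_j$. Comparing this with the trivial decomposition $x\in I_i$, uniqueness of components forces $xe_i=x$ and $xe_j=0$ for $j\ne i$.
\item Applying this to $x=e_i$ gives $e_i^2=e_i$ and $e_ie_j=0$ for $i\ne j$. So the $e_j$ are orthogonal idempotents summing to $1$.
\item The same identity $xe_i=x$ for all $x\in I_i$ shows $I_i\subseteq Re_i$. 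Conversely $Re_i\subseteq I_i$ because $e_i\in I_i$ and $I_i$ is a left ideal. So $I_i=Re_i$.
\end{itemize}

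\emph{Mutual inverses.} Going from ideals to idempotents and back recovers $I_i=Re_i$, as just shown. Going from idempotents $e_i$ to the ideals $Re_i$ and back: the decomposition $1=\sum e_i$ has its $j$-th component $e_j\in Re_j$, and components in a direct sum are unique, so we recover the same $e_i$. This gives the bijection.

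The only point needing care is the uniqueness-of-components argument in the second direction. That is where both idempotence and orthogonality come from, and it relies on the $I_j$ being left ideals, so that $xe_j\in I_j$.
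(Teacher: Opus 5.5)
Your proof is correct, and it is the standard argument from Curtis--Reiner that the paper cites without reproducing. As there, you write $1=\sum e_j$ with $e_j\in I_j$ and use uniqueness of components for $x=\sum_j xe_j$ to get idempotence, orthogonality and $I_i=Re_i$; conversely, right multiplication by $e_i$ shows that $\sum_j Re_j$ is direct.
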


Note that under the above correspondence, $I_i=\{0\}$ if and only if $e_i=0$.
Also, any idempotent $e\in R$ gives orthogonal $\{e,1-e\}$, therefore $R=Re\oplus R(1-e)$.

Now suppose that $R$ is an artinian ring (and hence noetherian by the Hopkins-Levitzki theorem).
The Krull-Schmidt theorem \cite[p.\ 128, Thm.\ (6.12)]{CuR81}
says that $R$ is a direct sum of finitely many indecomposable projective modules $I_i$.
Such modules correspond to primitive idempotents.

\begin{definition}
\label{primitive}
A {\sl nonzero} idempotent $e$ is said to be {\bf primitive} 
if it cannot be written as a sum of nonzero orthogonal idempotents $e=f_1+f_2$.
\end{definition}

\begin{lemma}[see {\cite[p.\ 119, 2nd par.]{CuR81}}]
\label{indecomposable}
If $e$ is an idempotent, then $Re$ is indecomposable if and only if $e$ is primitive.
\end{lemma}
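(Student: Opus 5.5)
We prove both directions by contraposition, using the correspondence of Lemma~\ref{KeyIdem} applied to the ring $Re$ viewed as a left $R$-module. The strategy is to translate decompositions of the left ideal $Re$ into orthogonal idempotent decompositions of $e$, and back.

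\emph{Step 1: a decomposition of $e$ gives a decomposition of $Re$.} Suppose $e$ is not primitive, so $e=f_1+f_2$ with $f_1,f_2$ nonzero orthogonal idempotents. We claim $Re=Rf_1\oplus Rf_2$.
\begin{itemize}
\item Multiplying $e=f_1+f_2$ on the left and on the right by $f_1$ gives $f_1e=f_1=ef_1$, and similarly $f_2e=f_2=ef_2$.
\item Hence $Rf_i=Rf_ie\subseteq Re$.
\item For any $r\in R$ we have $re=rf_1+rf_2$, so $Re=Rf_1+Rf_2$.
\item The sum is direct: if $x=r_1f_1=r_2f_2$, then $x=xf_1=r_2f_2f_1=0$.
\end{itemize}
Both summands are nonzero, since $f_i=f_if_i\in Rf_i$. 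So $Re$ is decomposable.

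\emph{Step 2: a decomposition of $Re$ gives a decomposition of $e$.} Suppose $Re=M_1\oplus M_2$ with $M_1,M_2$ nonzero left submodules. Write $e=f_1+f_2$ with $f_i\in M_i$, and note that the $M_i$ are left ideals of $R$.
\begin{itemize}
\item Each $x\in Re$ satisfies $x=xe$. Since $xf_i\in M_i$, uniqueness of the decomposition $x=xf_1+xf_2$ shows that $x\mapsto xf_i$ is the projection $\pi_i$ onto $M_i$.
\item Applying this to $x=f_1$ gives $f_1=f_1f_1+f_1f_2$. Comparing components yields $f_1^2=f_1$ and $f_1f_2=0$; the same argument gives $f_2^2=f_2$ and $f_2f_1=0$.
\item If $f_1=0$, then $M_1=\pi_1(Re)=Re\,f_1=0$, a contradiction. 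So $f_1\neq 0$, and likewise $f_2\neq 0$.
\end{itemize}
Hence $e$ is not primitive.

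Combining the two steps, $Re$ is indecomposable if and only if $e$ is primitive. The one subtle point is nonzeroness. On the module side, $Re\neq 0$ because $e\neq 0$ (primitive idempotents are nonzero by Definition~\ref{primitive}). On the idempotent side, the components $f_i$ must be nonzero, which is why we used the projection identity $\pi_i(x)=xf_i$ in Step 2.

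The main obstacle is verifying in Step 2 that $x\mapsto xf_i$ really is the projection. This rests on $M_i$ being left ideals, so that $xf_i\in M_i$, and on $x=xe$ for $x\in Re$.
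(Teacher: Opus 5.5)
Your proof is correct and is essentially the standard argument behind the paper's citation; the paper gives no proof of its own, only the reference to Curtis--Reiner. That argument is the correspondence of Lemma~\ref{KeyIdem} redone with the left ideal $Re$ in place of $R$: an orthogonal splitting $e=f_1+f_2$ gives $Re=Rf_1\oplus Rf_2$, and conversely one decomposes $e$ along $Re=M_1\oplus M_2$ and uses $x\mapsto xf_i$ as the projections, exactly as you do. Two small points: your opening sentence overstates the dependence, since you never invoke Lemma~\ref{KeyIdem} but reprove the correspondence directly for $Re$ (which is what is needed), and the degenerate case $e=0$, where $Re=0$ is not indecomposable and $e$ is not primitive, is adequately covered by your nonzeroness remark.
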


Note that in writing $R$ as a direct sum of indecomposable projective modules, the terms are unique
up to isomorphism and permutation, but this does {\sl not} mean that the corresponding idempotents are unique.
However, their number is the same.

\begin{definition}
\label{complete}
We say that a set $\{e_1,\ldots,e_n\}$ of orthogonal idempotents in $R$ is {\bf complete}
if $e_1+\ldots+e_n=1$.
\end{definition}

By Lemma~\ref{KeyIdem} and Lemma~\ref{indecomposable} we get the following corollary.

\begin{corollary}
\label{cpoi}
There is a $1\!\!-\!\!1$ correspondence between a decomposition $R=I_1\oplus\ldots\oplus I_n$
as a direct sum of indecomposable left ideals and a complete set of primitive orthogonal idempotents.
In particular, if $\{e_1,\ldots,e_r\}$ and $\{f_1,\ldots,f_s\}$ are
two complete sets of primitive orthogonal idempotents in $R$, then $r=s$.
\end{corollary}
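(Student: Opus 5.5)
The first assertion follows by composing the bijections of Lemma~\ref{KeyIdem} and Lemma~\ref{indecomposable}. Let $R$ be an artinian ring and $\{e_1,\ldots,e_n\}$ a complete set of orthogonal idempotents. Lemma~\ref{KeyIdem} identifies this set with the decomposition $R=Re_1\oplus\cdots\oplus Re_n$; in the reverse direction, an element is recovered from its components in this decomposition. Lemma~\ref{indecomposable} says that each summand $Re_i$ is indecomposable exactly when $e_i$ is primitive. By the note after Lemma~\ref{KeyIdem}, $Re_i=\{0\}$ if and only if $e_i=0$, so nonzero summands correspond to nonzero idempotents, which is consistent with Definition~\ref{primitive}. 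Restricting the bijection of Lemma~\ref{KeyIdem} to complete sets of \emph{primitive} orthogonal idempotents on one side, and to decompositions into \emph{indecomposable} left ideals on the other, therefore gives the claimed $1\!\!-\!\!1$ correspondence.

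For the cardinality statement, take two complete sets $\{e_1,\ldots,e_r\}$ and $\{f_1,\ldots,f_s\}$ of primitive orthogonal idempotents. By the first part they give two decompositions
\[
R=Re_1\oplus\cdots\oplus Re_r=Rf_1\oplus\cdots\oplus Rf_s
\]
of the left regular module into indecomposable left ideals. Since $R$ is artinian, and hence noetherian by Hopkins--Levitzki, $R$ has finite length as a left module over itself. The Krull--Schmidt theorem \cite[Thm.\ (6.12)]{CuR81} then says the indecomposable summands are unique up to isomorphism and permutation, and in particular $r=s$.

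The only point that needs care is verifying the hypotheses of Krull--Schmidt: each summand $Re_i$ must have finite length, or equivalently a local endomorphism ring. Finite length is inherited from $R$, since each $Re_i$ is a direct summand of ${}_RR$. Indecomposability of the summands comes from Lemma~\ref{indecomposable}. Beyond this, the argument only chains together results already stated.
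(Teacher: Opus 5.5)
Your proposal is correct and follows the paper's route: the correspondence comes from combining Lemma~\ref{KeyIdem} with Lemma~\ref{indecomposable}, and $r=s$ comes from the Krull--Schmidt theorem applied to the finite-length module ${}_RR$, as in the paragraph before the corollary. One small quibble: finite length and having a local endomorphism ring are not equivalent conditions; rather, finite length together with indecomposability gives a local endomorphism ring by Fitting's lemma, which is what Krull--Schmidt uses.
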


\begin{example}
\label{matrix}
In the ring $M_n(D)$ of $n\times n$ matrices over a division ring $D$,
the set of diagonal matrices $\{E_{11},\ldots,E_{nn}\}$ 
(where $E_{ii}$ has $1$ at $(i,i)$ and $0$ elsewhere)
is a complete set of primitive orthogonal idempotents.

Indeed, this follows from Corollary~\ref{cpoi} since
$M_n(D)=M_n(D)E_{11}\oplus\ldots\oplus M_n(D)E_{nn}$
is a decomposition of $M_n(D)$ as a direct sum of indecomposable left ideals.
\end{example}

Next, we look at central idempotents.

\begin{definition}
\label{central}
An idempotent $e$ of $R$ is said to be {\bf central} if it lies in the center of $R$.
We say that $e$ is {\bf centrally primitive} in $R$ if $e\ne0$ and 
$e$ cannot be written as a sum of two nonzero orthogonal central idempotents in $R$.
\end{definition}

\begin{lemma}[\!{\cite[pp.\ 326--328, Prop.\ (22.1) and Prop.\ (22.2)]{Lam01}}]
\label{centralLemma}
There is a bijection between:
\begin{enumerate}[label={\rm(\alph*)},leftmargin=*]
\item
an isomorphism $R\cong R_1\times\cdots\times R_n$ as a product of indecomposable rings;

\item
a decomposition $R=I_1\oplus\ldots\oplus I_n$ as a direct sum of indecomposable (two-sided) ideals; and

\item
an expression $1=e_1+\ldots+e_n$ as a sum of orthogonal centrally primitive idempotents.
\end{enumerate}
The correspondence ${\rm(c)}\Rightarrow{\rm(b)}$ is given by $I_i=Re_i$, $i=1,\ldots,n$.
\end{lemma}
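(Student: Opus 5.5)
The statement just above is Lemma~\ref{centralLemma}, the standard bijection among ring product decompositions, decompositions into two-sided ideals, and sums of orthogonal centrally primitive idempotents. I would prove it directly, passing through the idempotents in (c) as the hub. Throughout I use only that $R$ is associative with $1$.

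\emph{From (c) to (b).} Given $1=e_1+\dots+e_n$ with the $e_i$ orthogonal central idempotents, set $I_i=Re_i$.
\begin{itemize}
\item Since $e_i$ is central, $Re_i=e_iR=Re_iR$, so $I_i$ is a two-sided ideal.
\item Every $x\in R$ can be written $x=\sum xe_i$. If $\sum x_ie_i=0$, multiplying by $e_j$ and using orthogonality gives $x_je_j=0$. Hence the sum $R=\bigoplus I_i$ is direct.
\item Suppose $I_i=J\oplus J'$ with $J,J'$ nonzero two-sided ideals of $R$. Write $e_i=f+f'$ with $f\in J$, $f'\in J'$. Since $JJ'\subseteq J\cap J'=0$, comparing components of $e_i=e_i^2$ and of $xe_i=e_ix$ shows that $f,f'$ are orthogonal central idempotents. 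They are nonzero because $J=Rf$ and $J'=Rf'$. This contradicts central primitivity of $e_i$, so each $I_i$ is indecomposable.
\end{itemize}

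\emph{From (b) to (c).} Given $R=\bigoplus I_i$ with two-sided ideals $I_i$, write $1=\sum e_i$ with $e_i\in I_i$.
\begin{itemize}
\item For $i\neq j$ we have $I_iI_j\subseteq I_i\cap I_j=0$. Hence $x=x\cdot 1=\sum xe_i$ and $x=1\cdot x=\sum e_ix$, and comparing components gives $xe_i=e_ix$. So each $e_i$ is central.
\item Taking $x=e_j$ gives $e_je_i=\delta_{ij}e_j$, so the $e_i$ are orthogonal idempotents, and $I_i=Re_i$.
\item Central primitivity of $e_i$ follows by running the previous splitting argument backwards. A splitting $e_i=f+f'$ into nonzero orthogonal central idempotents yields $Re_i=Rf\oplus Rf'$ as nonzero two-sided ideals, contradicting indecomposability of $I_i$.
\end{itemize}
The two constructions are mutually inverse: from (c) we recover $e_i$ as the $I_i$-component of $1$, and from (b) we recover $I_i=Re_i$.

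\emph{Between (a) and (b)/(c).} Given central idempotents as in (c), the map
\[
R\to Re_1\times\dots\times Re_n,\qquad x\mapsto(xe_1,\dots,xe_n),
\]
is a ring isomorphism. Each $Re_i$ is a ring with unit $e_i$. Conversely, given $R\cong R_1\times\dots\times R_n$, the preimages $\varepsilon_i$ of the unit vectors are orthogonal central idempotents with $\sum\varepsilon_i=1$. Under this correspondence, a ring $R_i$ decomposes as a product exactly when its ideal $I_i$ decomposes as a direct sum of two-sided ideals, so indecomposability in (a) matches indecomposability in (b). Here one must note that ideals of $R$ contained in $Re_i$ are the same as ideals of the ring $Re_i$. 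This holds because $R$ acts on $Re_i$ through $Re_i$: for $r\in R$ and $y\in Re_i$ we have $ry=(re_i)y$.

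\emph{Main obstacle.} The only step needing care is the indecomposability bookkeeping: checking that a sub-splitting of one ideal produces \emph{central} (not merely one-sided) orthogonal idempotents. This is the component-comparison argument above, using $I_iI_j=0$ for $i\neq j$. Everything else is routine, and one could simply cite \cite{Lam01} as the paper does.
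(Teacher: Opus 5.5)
Your proof is correct and is the standard argument behind the cited Props.~(22.1)--(22.2) of Lam: you write $1$ in components, compare components to get centrality and orthogonality, and match indecomposability of ideals with central primitivity of idempotents. The paper gives no proof of its own and simply cites that source; the details you leave implicit (for example $J=Rf$ via $y=ye_i$ for $y\in J$, and the nonvanishing of each $e_i$ and $I_i$) are routine.
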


\begin{corollary}
\label{completeCorollary}
Suppose that $\{e_1,\ldots,e_n\}$ is  a set of orthogonal central idempotents in $R$ 
with $1=e_1+\ldots+e_n$.
If $\{e_{ij}\}_{j=1}^{m_i}$ is a complete set of primitive orthogonal idempotents in the ring $Re_i$ (with unit $e_i$),
$i=1,\ldots,n$, then
\begin{equation}
\label{completeEquation}
\big\{e_{11},\ldots,e_{1,m_1},\ldots,e_{n1},\ldots,e_{n,m_n}\big\}
\end{equation}
is a complete set of primitive orthogonal idempotents in $R$.
\end{corollary}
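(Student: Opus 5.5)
The argument has three parts: orthogonality and idempotency, completeness, and primitivity. The first two are direct calculations. For primitivity I plan to use the standard criterion that an idempotent $f$ of $R$ is primitive if and only if $fRf$ has no idempotents other than $0$ and $f$, and then reduce to the corner rings $e_iRe_i$.

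First, the elements $e_{ij}$ are idempotents of $R$. Each $e_{ij}$ lies in $Re_i\subseteq R$, and the multiplication of $Re_i$ is the restriction of that of $R$, so $e_{ij}^2=e_{ij}$ holds in $R$.

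Next, orthogonality. For fixed $i$ and $j\ne j'$ we have $e_{ij}e_{ij'}=0$ by hypothesis. For $i\ne i'$, write $e_{ij}=e_{ij}e_i$ and $e_{i'j'}=e_{i'j'}e_{i'}$. Since $e_i$ is central,
\[
e_{ij}e_{i'j'}=e_{ij}e_ie_{i'j'}e_{i'}=e_{ij}e_{i'j'}e_ie_{i'}=0,
\]
because $e_ie_{i'}=0$.

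Completeness is immediate: $\sum_{i,j}e_{ij}=\sum_i e_i=1$.

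For primitivity, suppose $e_{ij}=f_1+f_2$ with $f_1,f_2$ nonzero orthogonal idempotents of $R$. I will show that $f_1,f_2$ already lie in $Re_i$.
\begin{itemize}
\item From $f_1=f_1(f_1+f_2)=f_1e_{ij}$ and similarly $f_1=e_{ij}f_1$, we get $f_1=e_{ij}f_1e_{ij}$.
\item Since $e_{ij}=e_{ij}e_i$ and $e_i$ is central, $f_1=e_{ij}f_1e_{ij}e_i\in Re_i$. The same argument gives $f_2\in Re_i$.
\end{itemize}
Then $e_{ij}=f_1+f_2$ is a decomposition into nonzero orthogonal idempotents inside the ring $Re_i$. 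This contradicts the primitivity of $e_{ij}$ in $Re_i$, so $e_{ij}$ is primitive in $R$. Each $e_{ij}$ is also nonzero, being primitive in $Re_i$.

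The only point that needs care is this last step: showing that any decomposition of $e_{ij}$ in $R$ happens inside the corner ring $Re_i$. It follows from the identity $f=efe$ for a subidempotent $f$ of $e$, combined with the centrality of $e_i$. All other steps are routine.
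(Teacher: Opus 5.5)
Your proof is correct, and its first two parts match the paper's. The paper just cites the decomposition $R=Re_1\oplus\cdots\oplus Re_n$ into two-sided ideals, where you compute $e_{ij}e_{i'j'}=e_{ij}e_ie_{i'j'}e_{i'}=0$ directly using centrality.

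The primitivity step takes a different route. The paper works with modules. By Lemma~\ref{indecomposable}, primitivity of $e_{ij}$ in $Re_i$ means $Re_ie_{ij}=Re_{ij}$ is an indecomposable left ideal of $Re_i$. Every left ideal of $R$ contained in $Re_{ij}$ is a left ideal of $Re_i$, so $Re_{ij}$ is indecomposable over $R$, and Lemma~\ref{indecomposable} is applied again. You stay with idempotents and the bare definition: any orthogonal splitting $e_{ij}=f_1+f_2$ in $R$ has $f_k=f_ke_{ij}=f_ke_{ij}e_i\in Re_i$, so it is already a splitting inside $Re_i$.

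Both arguments express the same fact: whatever lies below $e_{ij}$ in $R$ already lives in $Re_i$. Yours is shorter and self-contained, needing neither Lemma~\ref{indecomposable} nor the idempotent/left-ideal correspondence. The paper's phrasing fits its module-theoretic framework (Krull--Schmidt, Corollary~\ref{cpoi}).

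Two minor points. The $fRf$ criterion announced in your opening paragraph is never actually used. Also, centrality of $e_i$ is not needed to get $f_k\in Re_i$; it is needed only for $Re_i$ to be a ring with unit $e_i$ and for orthogonality across different $i$.
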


\begin{proof}
According to the correspondence given in Lemma~\ref{centralLemma},
$R=Re_1\oplus\ldots\oplus Re_n$ is a direct sum of (two-sided) ideals.
Hence the set in \eqref{completeEquation} consists of orthogonal idempotents.
Since $e_i=e_{i1}+\ldots+e_{i,m_i}$, $i=1,\ldots,n$, we get
$1=e_{11}+\ldots+e_{1,m_1}+\ldots+e_{n1}+\ldots+e_{n,m_n}$,
so this set is complete.

Moreover, each $e_{ij}$ is primitive in $R$.

Indeed, as $e_i=e_{i1}+\ldots+e_{i,m_i}$ is a sum of orthogonal idempotents, $e_ie_{ij}=e_{ij}$.
Therefore, since $e_{ij}$ is primitive in $Re_i$,
we get by Lemma~\ref{indecomposable} that $Re_{ij}=Re_ie_{ij}$ is indecomposable in the ring $Re_i$
as a direct sum of two nonzero left ideals.
If $I$ is a left ideal of $R$ which is contained in $Re_{ij}$, then it is also
a left ideal of $Re_i$ which is contained in $Re_ie_{ij}$.
Hence, $Re_{ij}$ is also indecomposable in the ring $R$.
Thus, by Lemma~\ref{indecomposable} again, $e_{ij}$ is primitive in $R$.
\end{proof}


\section[Symmetric tensor powers]{Symmetric tensor powers of quaternions and octonions}
\label{II}

Let $\mathbb{O}$ be the Cayley octonion algebra of dimension $8$ over $\mathbb{R}$ 
with a basis $e_0=1$, $e_1$, $e_2$, $e_3,e_4,e_5,e_6,e_7$
that satisfies the following multiplication table:

\begin{equation}
\label{MultTable}
\begin{tabular}{ c| c | c | c | c | c | c | c | c |}
$\cdot$ & $e_0$ & $e_1$ & $e_2$ & $e_3$ & $e_4$ & $e_5$ & $e_6$ & $e_7$ \\
\hline
$e_0$ & $e_0$ & $e_1$ & $e_2$ & $e_3$ & $e_4$ & $e_5$ & $e_6$ & $e_7$ \\
\hline
$e_1$ & $e_1$ & $-e_0$ & $e_3$ & $-e_2$ & $e_5$ & $-e_4$ & $-e_7$ & $e_6$ \\
\hline
$e_2$ & $e_2$ & $-e_3$ & $-e_0$ & $e_1$ & $e_6$ & $e_7$ & $-e_4$ & $-e_5$ \\
\hline
$e_3$ & $e_3$ & $e_2$ & $-e_1$ & $-e_0$ & $e_7$ & $-e_6$ & $e_5$ & $-e_4$ \\
\hline
$e_4$ & $e_4$ & $-e_5$ & $-e_6$ & $-e_7$ & $-e_0$ & $e_1$ & $e_2$ & $e_3$ \\
\hline
$e_5$ & $e_5$ & $e_4$ & $-e_7$ & $e_6$ & $-e_1$ & $-e_0$ & $-e_3$ & $e_2$ \\
\hline
$e_6$ & $e_6$ & $e_7$ & $e_4$ & $-e_5$ & $-e_2$ & $e_3$ & $-e_0$ & $-e_1$ \\
\hline
$e_7$ & $e_7$ & $-e_6$ & $e_5$ & $e_4$ & $-e_3$ & $-e_2$ & $e_1$ & $-e_0$ \\
\hline
\end{tabular}
\end{equation}

\bigskip\noindent
We let $\mathbb{H}$ be the Hamilton quaternion algebra of dimension $4$ over $\mathbb{R}$,
considered as a subalgebra of $\mathbb{O}$,
with the basis $e_0=1,e_1,e_2,e_3$.
We also consider the algebra of complex quaternions $\mathbb{H}_\mathbb{C}:=\mathbb{H}\otimes_\mathbb{R}\mathbb{C}$ 
and the algebra of complex octonions $\mathbb{O}_\mathbb{C}:=\mathbb{O}\otimes_\mathbb{R}\mathbb{C}$ 
over the field $\mathbb{C}=\mathbb{R}[\sqrt{-1}]$ of complex numbers,
where $\sqrt{-1}$ is a root of $X^2+1$ in $\mathbb{C}$.
We write any $x\in\mathbb{O}_\mathbb{C}$ as a sum $x=\sum_{i=0}^7\alpha_ie_i$, with $\alpha_i\in\mathbb{C}$,
and define a nondegenerate quadratic form $N\colon\mathbb{O}_\mathbb{C}\rightarrow\mathbb{C}$ by
$N(x)=\sum_{i=0}^7\alpha_i^2$.
It is multiplicative, meaning that $N(xy)=N(x)N(y)$ for all $x,y\in\mathbb{O}_\mathbb{C}$.
Its restrictions to $\mathbb{O}$, $\mathbb{H}$, $\mathbb{R}[e_1]$, $\mathbb{H}_\mathbb{C}$, and $\mathbb{R}[e_1]_\mathbb{C}:=\mathbb{C}[e_1]$ 
are also multiplicative nondegenerate quadratic forms.
Thus, $\mathbb{R}[e_1]$, $\mathbb{H}$, and $\mathbb{O}$ (resp.\ $\mathbb{C}[e_1]$, $\mathbb{H}_\mathbb{C}$, and $\mathbb{O}_\mathbb{C}$) 
are {\bf composition} algebras over $\mathbb{R}$ (resp.\ $\mathbb{C}$).

We denote by $\mathcal{C}$ either the algebra $\mathbb{R}[e_1]$, which is isomorphic to $\mathbb{C}$, or
the quaternion algebra $\mathbb{H}=\mathbb{R}[e_1,e_2,e_3]$ or the octonion algebra 
$\mathbb{O}=\mathbb{R}[e_1,e_2,e_3,e_4,e_5,e_6,e_7]$ over $\mathbb{R}$.
We also denote by $F$ either the field $\mathbb{R}$ of real numbers or the field $\mathbb{C}$ of complex numbers.
Then, with $\mathcal{C}_\mathbb{R}:=\mathcal{C}$, the algebra $\mathcal{C}_F$ is a composition algebra over $F$,
which is one of the algebras $\mathbb{R}[e_1]$, $\mathbb{H}$, $\mathbb{O}$, $\mathbb{C}[e_1]$, $\mathbb{H}_\mathbb{C}$, $\mathbb{O}_\mathbb{C}$.
Let $d=\dim_\mathbb{R}\mathcal{C}=\dim_\mathbb{C}\mathcal{C}_\mathbb{C}$, so $d=2$ if $\mathcal{C}=\mathbb{R}[e_1]$ or
$d=4$ if $\mathcal{C}=\mathbb{H}$ or $d=8$ if $\mathcal{C}=\mathbb{O}$.
Consider the element
\begin{equation}
\label{triangle}
\triangle_\mathcal{C}:=\frac{1}{d}\sum_{i=0}^{d-1}e_i\otimes e_i
\end{equation}
of the ring of symmetric tensors $\Sym^2\mathcal{C}$ in $\mathcal{C}\otimes_\mathbb{R}\mathcal{C}$.
It satisfies the following result.

\begin{lemma}
\label{triangleLemma}
$\triangle_\mathcal{C}(x\otimes x)=N(x)\triangle_\mathcal{C}=(x\otimes x)\triangle_\mathcal{C}$ for all $x\in\mathcal{C}_F$.
In particular, $\triangle_\mathcal{C}$ lies in the {\bf center} of $\Sym^2\mathcal{C}_F$, 
i.e. it commutes and associates with all elements of $\Sym^2\mathcal{C}_F$,
and $\Sym^2\mathcal{C}_F\cdot\triangle_\mathcal{C}=F\cdot\triangle_\mathcal{C}$.
\end{lemma}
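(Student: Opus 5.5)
We first prove the identity $\triangle_\mathcal{C}(x\otimes x)=N(x)\triangle_\mathcal{C}$ for an arbitrary $x=\sum_j\alpha_je_j\in\mathcal{C}_F$, and then obtain the remaining assertions from it by polarization and linearity. The key input is the structure of the basis: in each of $\mathbb{R}[e_1]$, $\mathbb{H}$, $\mathbb{O}$ (read off from the multiplication table \eqref{MultTable}), left multiplication by a basis element $e_j$ permutes the basis up to sign, $e_ie_j=\varepsilon_{ij}e_{\pi_j(i)}$ with $\varepsilon_{ij}\in\{\pm1\}$ and $\pi_j$ a permutation of $\{0,\dots,d-1\}$. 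Hence $(e_i\otimes e_i)(e_j\otimes e_j)=e_{\pi_j(i)}\otimes e_{\pi_j(i)}$, since the sign appears twice. Summing over $i$ gives $\triangle_\mathcal{C}(e_j\otimes e_j)=\triangle_\mathcal{C}$, and likewise from the right.

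For the cross terms $j\neq k$ we need
\[
\triangle_\mathcal{C}\bigl(e_j\otimes e_k+e_k\otimes e_j\bigr)=0 .
\]
For $j=0$, $k\ne0$ this reads $\sum_i(e_i\otimes e_ie_k+e_ie_k\otimes e_i)=0$. Substituting $i\mapsto\pi_k(i)$ in the second sum, and using $e_ie_k=\varepsilon e_{i'}$ together with $e_{i'}e_k=-\varepsilon e_i$ (because $e_k^2=-1$ and the subalgebra generated by two elements is associative), the two sums cancel term by term. For $j,k\neq0$ distinct, the products $e_ie_j\otimes e_ie_k$ and $e_ie_k\otimes e_ie_j$ pair off with opposite signs, by the same kind of bookkeeping, using anticommutativity $e_je_k=-e_ke_j$ and alternativity (Artin's theorem) applied to the three elements $e_i,e_j,e_k$.

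I expect this sign bookkeeping to be the main obstacle. A cleaner route that avoids it is to note that $\triangle_\mathcal{C}(y\otimes z)$ corresponds to the bilinear map $(y,z)\mapsto \frac1d\sum_i e_iy\otimes e_iz$. Using the identity $\sum_i e_iy\otimes e_iz=\sum_i e_i\otimes e_i(\bar y z)$ (equivalently, left multiplication $L_y$ is $\sqrt{N(y)}$ times an orthogonal map), together with $\bar y z+\bar z y=2\langle y,z\rangle$, one gets
\[
\triangle_\mathcal{C}(y\otimes z+z\otimes y)=2\langle y,z\rangle\triangle_\mathcal{C}.
\]
I would try this route first, verifying the needed identity on basis elements in the same way as above. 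Expanding $x\otimes x=\sum_j\alpha_j^2e_j\otimes e_j+\sum_{j<k}\alpha_j\alpha_k(e_j\otimes e_k+e_k\otimes e_j)$ then gives $N(x)\triangle_\mathcal{C}$. The right-hand version follows symmetrically, using right multiplication.

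For the remaining assertions, $\Sym^2\mathcal{C}_F$ is spanned by the elements $x\otimes x$ with $x\in\mathcal{C}_F$ (polarization, since $\operatorname{char}F=0$). Hence $\triangle_\mathcal{C}$ commutes with every element of $\Sym^2\mathcal{C}_F$, and $\Sym^2\mathcal{C}_F\cdot\triangle_\mathcal{C}=F\triangle_\mathcal{C}$. The inclusion $\supseteq$ holds because $1\otimes1\in\Sym^2\mathcal{C}_F$, and $\triangle_\mathcal{C}\neq0$.

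For associativity, consider the associators $(\triangle_\mathcal{C},u,v)$, $(u,\triangle_\mathcal{C},v)$ and $(u,v,\triangle_\mathcal{C})$ with $u=y\otimes y$ and $v=z\otimes z$. Each product involving $\triangle_\mathcal{C}$ reduces to a scalar multiple of $\triangle_\mathcal{C}$ by the identity just proved. For instance,
\[
\triangle_\mathcal{C}\bigl((y\otimes y)(z\otimes z)\bigr)=\triangle_\mathcal{C}(yz\otimes yz)=N(yz)\triangle_\mathcal{C}=N(y)N(z)\triangle_\mathcal{C},
\]
by multiplicativity of $N$, while $\bigl(\triangle_\mathcal{C}(y\otimes y)\bigr)(z\otimes z)=N(y)N(z)\triangle_\mathcal{C}$ as well. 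The other two associators are handled the same way, and linearity finishes the proof.
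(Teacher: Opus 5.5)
\textbf{Error in the route you would try first.} For $\mathcal{C}=\mathbb{O}$ the identity $\sum_i e_iy\otimes e_iz=\sum_i e_i\otimes e_i(\bar yz)$ is false. Take $y=e_2$, $z=e_4$, so $\bar yz=-e_6$. By \eqref{MultTable}, the unique term on the left with first factor $\pm e_3$ is $e_1e_2\otimes e_1e_4=e_3\otimes e_5$. The unique such term on the right is $-e_3\otimes e_3e_6=-e_3\otimes e_5$.

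Conceptually, identify $u\otimes v$ with $w\mapsto\langle u,w\rangle v$, where $\langle\,,\rangle$ is the polar form of $N$. Then the two sides are $w\mapsto(w\bar y)z$ and $w\mapsto w(\bar yz)$. These differ as soon as the associator $(w,\bar y,z)$ is nonzero, so the identity holds for $\mathbb{R}[e_1]$ and $\mathbb{H}$ but not for $\mathbb{O}$. It is also not equivalent to multiplication by $y$ being a similarity; that property only gives the diagonal case $y=z$. Verifying it on basis elements, as you propose, will therefore fail.

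Your symmetrized conclusion $\triangle_\mathcal{C}(y\otimes z+z\otimes y)=2\langle y,z\rangle\triangle_\mathcal{C}$ is still true. It corresponds to $(w\bar y)z+(w\bar z)y=2\langle y,z\rangle w$, which holds in every composition algebra.

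\textbf{A simpler route needing no polarization.} Use $\langle e_ix,w\rangle=\langle e_i,w\bar x\rangle$ and $(w\bar x)x=w(\bar xx)=N(x)w$. The latter follows from Artin's theorem for the two elements $w,x$, since $\bar x\in F1+Fx$. Together they show that $\sum_ie_ix\otimes e_ix$ corresponds to $N(x)\,\mathrm{id}$, while $\sum_ie_i\otimes e_i$ corresponds to $\mathrm{id}$. Hence $\triangle_\mathcal{C}(x\otimes x)=N(x)\triangle_\mathcal{C}$ for every $x\in\mathcal{C}_F$, isotropic or not. The other side is the same argument with $x(\bar xw)=N(x)w$.

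\textbf{Your first route.} It is sound, but the justification for the cross terms with $j,k\neq0$ is misattributed. Artin's theorem concerns two generators, and three basis elements such as $e_1,e_2,e_4$ generate all of $\mathbb{O}$. What you need is that the associator is alternating, which gives $(we_j)e_k=-(we_k)e_j$ for distinct $j,k\neq0$. Then $e_{i'}:=\pm(e_ie_j)e_k$ satisfies $e_{i'}e_k=\mp e_ie_j$ and $e_{i'}e_j=\pm e_ie_k$. This produces the cancelling partner $-e_ie_j\otimes e_ie_k$.

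\textbf{Comparison with the paper.} With that correction you have a complete, self-contained proof, which is more than the paper gives here. The paper quotes Lemma 2.2 of [Raz24] for $\mathbb{H}$ and $\mathbb{O}$, and only checks $\mathbb{R}[e_1]$ by the one-line computation with $(\alpha+\beta e_1)\otimes(\alpha+\beta e_1)$. Your deduction of commutation, of associativity (via $N(yz)=N(y)N(z)$ on the spanning set $\{x\otimes x\}$), and of $\Sym^2\mathcal{C}_F\cdot\triangle_\mathcal{C}=F\cdot\triangle_\mathcal{C}$ is correct. It follows the same pattern the paper uses in Lemma~\ref{center}.
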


\begin{proof}
This is \cite[Lemma 2.2]{Raz24} for the cases $\mathcal{C}=\mathbb{H}$ or $\mathcal{C}=\mathbb{O}$.
For $\mathcal{C}=\mathbb{R}[e_1]$, this follows from
\begin{equation*}
\frac{1}{2}(1\otimes1+e_1\otimes e_1)\big((\alpha+\beta e_1)\otimes(\alpha+\beta e_1)\big)
=\frac{1}{2}(\alpha^2+\beta^2)(1\otimes1+e_1\otimes e_1),
\end{equation*}
with $\alpha,\beta\in\mathbb{C}$, by a direct calculation.
\end{proof}

In particular, by Lemma~\ref{triangleLemma}, $\triangle_\mathcal{C}(e_i\otimes e_i)=\triangle_\mathcal{C}$, $i=0,\ldots,d-1$.
Hence, by \eqref{triangle},
\begin{equation}
\label{triangleSq}
\triangle_\mathcal{C}^2=\triangle_\mathcal{C},
\end{equation}
\begin{equation}
\label{triangleReH}
\triangle_\mathbb{H}\triangle_{\mathbb{R}[e_1]}=\triangle_\mathbb{H}=\triangle_{\mathbb{R}[e_1]}\triangle_\mathbb{H},
\end{equation}
and
\begin{equation}
\label{triangleHO}
\triangle_\mathbb{O}\triangle_\mathbb{H}=\triangle_\mathbb{O}=\triangle_\mathbb{H}\triangle_\mathbb{O}.
\end{equation}

We denote
\begin{equation*}
\Imag_1\mathcal{C}_F:=\{e\in\mathcal{C}_F\mid e^2=-1\}
=\Big\{\sum_{i=1}^{d-1}\alpha_ie_i\mid\sum_{i=1}^{d-1}\alpha_i^2=1,\alpha_i\in F\Big\}.
\end{equation*}
For $e\in\Imag_1\mathcal{C}_F$ and $x\in\mathcal{C}_F$, we denote
\begin{equation*}
x_e:=\frac{1}{2}(x-exe).
\end{equation*}

\begin{lemma}
\label{epart}
Let $e=\sum_{i=1}^{d-1}\alpha_ie_i\in\Imag_1\mathcal{C}_F$. Then, $(e_0)_e=1$ and $(e_i)_e=\alpha_ie$, for all $i=1,\ldots,d-1$.
\end{lemma}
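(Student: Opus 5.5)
We need to show $(e_0)_e=1$ and $(e_i)_e=\alpha_i e$ for $i\ge1$. The plan is a direct computation, relying only on the alternative-algebra identities available in the composition algebra $\mathcal{C}_F$. The expression $exe$ is unambiguous because $\mathcal{C}_F$ is alternative: by Artin's theorem, the subalgebra generated by $e$ and $x$ is associative. First, $(e_0)_e=\frac12(1-e\cdot1\cdot e)=\frac12(1-e^2)=\frac12(1+1)=1$, since $e\in\Imag_1\mathcal{C}_F$ means $e^2=-1$.

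For $i\ge1$, I will use the standard facts about pure imaginary elements. For $u,v$ in the span of $e_1,\dots,e_{d-1}$ we have
\[
uv+vu=-2\langle u,v\rangle,
\]
where $\langle\,,\rangle$ is the bilinear form polarizing $N$. This is checked on the basis: the multiplication table \eqref{MultTable} shows that $e_j^2=-1$ and $e_je_k=-e_ke_j$ for distinct $j,k\ge1$, and the identity then extends bilinearly over $F$. In particular $e_ie+ee_i=-2\alpha_i$. Hence $e_ie=-ee_i-2\alpha_i$. Left-multiplying by $e$ and using flexibility/alternativity ($e(ee_i)=e^2e_i=-e_i$), we get
\[
e e_i e=-e(ee_i)-2\alpha_i e=e_i-2\alpha_i e.
\]
Therefore $(e_i)_e=\frac12\bigl(e_i-(e_i-2\alpha_i e)\bigr)=\alpha_i e$.

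The only point needing care is justifying $e(ee_i)=(ee)e_i$ and the meaning of $e(e_ie)$ versus $(ee_i)e$ in the nonassociative case $\mathcal{C}=\mathbb{O}$. Both follow from alternativity of $\mathbb{O}_F$, which holds since $\mathbb{O}_F$ is a composition algebra; alternatively, Artin's theorem gives them directly. For $\mathbb{R}[e_1]$ and $\mathbb{H}$ everything is associative and the computation is immediate.
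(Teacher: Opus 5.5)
Your proof is correct, but it takes a different route from the paper. The paper gives no computation for $\mathbb{H}$ and $\mathbb{O}$: it cites \cite[Lemma~2.3]{Raz24} for those cases. It then checks only the new case $\mathcal{C}=\mathbb{R}[e_1]$ by hand, observing that $\Imag_1\mathcal{C}_F=\{\pm e_1\}$ there, so both identities follow immediately from the definition $x_e=\frac{1}{2}(x-exe)$.

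You instead give one self-contained argument that covers all three algebras and both fields $F$ at once. It uses two ingredients:
\begin{itemize}
\item the relation $uv+vu=-2\langle u,v\rangle$ on the span of $e_1,\dots,e_{d-1}$, read off from the multiplication table and extended $F$-bilinearly;
\item alternativity (Artin's theorem), which makes $exe$ unambiguous and justifies $e(ee_i)=e^2e_i$.
\end{itemize}

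The paper's route is shorter, given the earlier reference. Yours does not depend on \cite{Raz24} and needs no separate treatment of $\mathbb{R}[e_1]$. It also makes clear that only $F$-bilinear identities are used, so complex coefficients $\alpha_i$ (the case $F=\mathbb{C}$) require no additional argument.
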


\begin{proof}
This is \cite[Lemma 2.3]{Raz24} for the cases $\mathcal{C}=\mathbb{H}$ or $\mathcal{C}=\mathbb{O}$. For $\mathcal{C}=\mathbb{R}[e_1]$, we have 
$\Imag_1\mathcal{C}_F=\{-e_1,e_1\}$, so in this case $(e_0)_{e_1}=(e_0)_{-e_1}=1$ 
and $(e_1)_{e_1}=(e_1)_{-e_1}=e_1$, by definition.
\end{proof}

Fix a positive integer $n$.
Let $\mathcal{C}_F^{\otimes n}$ be the tensor product over $F$ of $n$ copies of $\mathcal{C}_F$. For $e\in\Imag_1\mathcal{C}_F$,
we extend the $F$-linear map $\mathcal{C}_F\rightarrow F[e]$ defined by $x\mapsto x_e$ to 
an $F$-linear map $\mathcal{C}_F^{\otimes n}\rightarrow F[e]$ defined by 
\begin{equation}
\label{epartn}
x_1\otimes\cdots\otimes x_n\mapsto(x_1\otimes\cdots\otimes x_n)_e:=(x_1)_e\cdots(x_n)_e.
\end{equation}

Define the action of the symmetric group 
$\mathfrak{S}_n$ on $\mathcal{C}_F^{\otimes n}$ by 
\[\sigma(x_1\otimes\cdots\otimes x_n)=x_{\sigma^{-1}(1)}\otimes\cdots\otimes x_{\sigma^{-1}(n)}\]
for $\sigma\in \mathfrak{S}_n$ and $x_1,\dots,x_n\in\mathcal{C}_F$.
Let
\begin{equation*}
x^\vee:=\frac{1}{n!}\sum_{\sigma\in\mathfrak{S}_n}\sigma(x)
\end{equation*}
for $x\in\mathcal{C}_F^{\otimes n}$.
Then $\{x^\vee\mid x\in\mathcal{C}_F^{\otimes n}\}$ is the
fixed ring
\begin{equation*}
\Sym^n\mathcal{C}_F=\{x\in\mathcal{C}_F^{\otimes n}\mid\sigma(x)=x\text{ for all }\sigma\in\mathfrak{S}_n\}
\end{equation*}
of $\mathcal{C}_F^{\otimes n}$ under the action of $\mathfrak{S}_n$. 
We identify $\Sym^n\mathcal{C}_\mathbb{C}$ with $\Sym^n\mathcal{C}\otimes_\mathbb{R}\mathbb{C}$.
Note that
\begin{subequations}\label{eq_sym_1}
\begin{equation}\label{eq_sym_1a}
(\sigma(x))^\vee=x^\vee\hbox{\rm\ for\ }x\in\mathcal{C}_F^{\otimes n}\text{ and }\sigma\in\mathfrak{S}_n\,,
\end{equation}
\begin{equation}\label{eq_sym_1b}
(x\otimes y)^\vee=(x\otimes y^\vee)^\vee\hbox{\rm\ for\ }x\in\mathcal{C}_F^{\otimes k},
y\in\mathcal{C}_F^{\otimes(n-k)},\text{ and}
\end{equation}
\begin{equation}\label{eq_sym_1c}
(xr)^\vee=x^\vee r\text{ and }(rx)^\vee=rx^\vee\hbox{\rm\ for\ }x\in\mathcal{C}_F^{\otimes n},r\in\Sym^n\mathcal{C}_F.
\end{equation}
\end{subequations}
Clearly, $(x^\vee)_e=x_e$ for each $x\in\mathcal{C}_F^{\otimes n}$.

In the rest of this section we summarize the results of \cite{Raz21,  Raz24, Raz25}
that we need on the decomposition of $\Sym^n\mathcal{C}_F$ into its simple components
and draw from them more results.

\begin{lemma}[see {\cite[Lemma 3.1]{Raz21}}]
\label{span} 
The set $\{x^{\otimes n}\mid x\in\mathcal{C}_F\}$ spans $\Sym^n\mathcal{C}_F$.
\end{lemma}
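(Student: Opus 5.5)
The plan is to use the standard polarization argument for symmetric tensors. Let $W\subseteq\Sym^n\mathcal{C}_F$ denote the $F$-linear span of $\{x^{\otimes n}\mid x\in\mathcal{C}_F\}$. Clearly $W\subseteq\Sym^n\mathcal{C}_F$. For the reverse inclusion, the symmetrizations $(x_1\otimes\cdots\otimes x_n)^\vee$ span $\Sym^n\mathcal{C}_F$. Indeed, any fixed element $y$ satisfies $y=y^\vee$, and $y$ is a linear combination of pure tensors. So it suffices to show that every $(x_1\otimes\cdots\otimes x_n)^\vee$, with $x_i\in\mathcal{C}_F$, lies in $W$. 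By multilinearity it is even enough to do this for $x_i$ drawn from the basis $e_0,\ldots,e_{d-1}$.

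The key step is the polarization identity
\[
n!\,(x_1\otimes\cdots\otimes x_n)^\vee=\sum_{\emptyset\ne S\subseteq\{1,\ldots,n\}}(-1)^{n-|S|}\Big(\sum_{i\in S}x_i\Big)^{\otimes n}.
\]
To verify it, expand each $\big(\sum_{i\in S}x_i\big)^{\otimes n}$ as a sum over all maps $f\colon\{1,\ldots,n\}\to S$ of $x_{f(1)}\otimes\cdots\otimes x_{f(n)}$. Then collect terms according to the image $f(\{1,\ldots,n\})=T$. A term with image $T$ appears with total coefficient $\sum_{S\supseteq T}(-1)^{n-|S|}$. This coefficient vanishes unless $T=\{1,\ldots,n\}$, by the binomial identity $\sum_{j}\binom{n-|T|}{j}(-1)^{j}=0$ when $|T|<n$. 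When $T=\{1,\ldots,n\}$, the maps $f$ are exactly the permutations, and these terms give $\sum_{\sigma}\sigma(x_1\otimes\cdots\otimes x_n)=n!\,(x_1\otimes\cdots\otimes x_n)^\vee$.

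Since $F$ has characteristic $0$, we may divide by $n!$, which gives $(x_1\otimes\cdots\otimes x_n)^\vee\in W$.

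The main (mild) obstacle is the inclusion–exclusion bookkeeping. One must check that the coefficient attached to each map $f$ depends only on its image and cancels correctly. Nothing about the multiplicative structure of $\mathcal{C}_F$ is used; the argument works in any vector space over a field of characteristic $0$ (or characteristic greater than $n$). Alternatively, one could argue via a Vandermonde determinant: take $x=\sum_i t_ix_i$ with varying scalars $t_i$, and extract the coefficient of $t_1\cdots t_n$ in $x^{\otimes n}$ from finitely many specializations. I would present the subset formula, since it is explicit.
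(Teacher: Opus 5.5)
Your proof is correct. The key identity checks out: a term $x_{f(1)}\otimes\cdots\otimes x_{f(n)}$ gets total coefficient $\sum_{S\supseteq \mathrm{im}(f)}(-1)^{n-|S|}=(1-1)^{n-|\mathrm{im}(f)|}$. This is $1$ exactly when $f$ is a permutation and $0$ otherwise. Summing $x_{\sigma^{-1}(1)}\otimes\cdots\otimes x_{\sigma^{-1}(n)}$ over all $\sigma\in\mathfrak{S}_n$ is the same as summing over all bijections $f$. Add the observation $y=y^\vee$ for $y\in\Sym^n\mathcal{C}_F$ and the invertibility of $n!$ in $F$, and the lemma follows. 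Since no multiplication in $\mathcal{C}_F$ is used, the argument covers every algebra the paper considers, including $\mathbb{R}[e_1]$ and $\mathbb{C}[e_1]$.

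The paper does not prove this lemma; it only refers to \cite[Lemma 3.1]{Raz21}. Your argument is therefore a self-contained replacement rather than a variant of something in the text. The other common route is the coefficient comparison you mention at the end. Write $x=\sum_i\alpha_ie_i$ and expand $x^{\otimes n}=\sum_{k_0+\cdots+k_{d-1}=n}\binom{n}{k_0,\ldots,k_{d-1}}\alpha_0^{k_0}\cdots\alpha_{d-1}^{k_{d-1}}(e_0^{\otimes k_0}\otimes\cdots\otimes e_{d-1}^{\otimes k_{d-1}})^\vee$. Then a polynomial vanishing on $F^d$ is zero, the kind of argument the paper uses in Lemma~\ref{zeroIntersection} via Lemma~\ref{Noga}. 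Hence no nonzero linear functional on $\Sym^n\mathcal{C}_F$ kills every $x^{\otimes n}$. That version is quicker once the symmetrized monomial basis is available. Yours writes down the required linear combination explicitly. Both need only characteristic $0$ (or larger than $n$), which holds for $F=\mathbb{R},\mathbb{C}$.
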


We write $\left\lceil \frac{n}{2} \right\rceil$ (resp. $\left\lfloor \frac{n}{2} \right\rfloor$) for the integer rounding up (resp. down) $\frac{n}{2}$.
\begin{lemma}
\label{zeroCond}
Let $\ell$ be an integer between $0$ and $\left\lfloor \frac{n}{2} \right\rfloor$. Then
$(s\otimes\triangle_\mathcal{C}^{\otimes\ell})^\vee=0$ if and only if $s=0$ for each $s\in\Sym^{n-2\ell}\mathcal{C}_F$.
\end{lemma}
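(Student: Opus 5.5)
The plan is to identify symmetric tensors with homogeneous polynomials. Then $(s\otimes\triangle_\mathcal{C}^{\otimes\ell})^\vee$ becomes the product of the polynomial of $s$ with a power of a fixed nonzero quadratic form, and the claim follows because a polynomial ring has no zero divisors. The direction ``$s=0\Rightarrow(s\otimes\triangle_\mathcal{C}^{\otimes\ell})^\vee=0$'' is trivial by linearity, so only the converse needs work.

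First, for each $k\ge 0$ I would define the $F$-linear map $\Phi_k\colon\mathcal{C}_F^{\otimes k}\to F[y_0,\ldots,y_{d-1}]_k$ (homogeneous polynomials of degree $k$) by
\[
\Phi_k(x_1\otimes\cdots\otimes x_k)=\lambda(x_1)\cdots\lambda(x_k),\qquad \lambda\Big(\sum_{i=0}^{d-1}\alpha_ie_i\Big)=\sum_{i=0}^{d-1}\alpha_iy_i .
\]
Equivalently, $\Phi_k(t)$ is the pairing of $t$ with $y^{\otimes k}$, where $y=\sum_i y_ie_i^*$. This map has three properties I would check in turn:
\begin{enumerate}[label={\rm(\roman*)},leftmargin=*]
\item $\Phi_k(\sigma(t))=\Phi_k(t)$ for every $\sigma\in\mathfrak{S}_k$, because the product of the $\lambda(x_j)$ is commutative; hence $\Phi_k(t^\vee)=\Phi_k(t)$.
\item $\Phi_{k+k'}(t\otimes t')=\Phi_k(t)\Phi_{k'}(t')$.
\item $\Phi_k$ restricted to $\Sym^k\mathcal{C}_F$ is injective.
\end{enumerate}
For (iii), $\Sym^k\mathcal{C}_F$ has the basis of symmetrized monomials $(e_{i_1}\otimes\cdots\otimes e_{i_k})^\vee$ indexed by multisets $\{i_1,\ldots,i_k\}$. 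Under $\Phi_k$ this basis goes to the distinct monomials $y_{i_1}\cdots y_{i_k}$, which are linearly independent. So $\Phi_k$ is in fact an isomorphism $\Sym^k\mathcal{C}_F\cong F[y]_k$; this uses only that $\operatorname{char}F=0$.

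Next I compute $\Phi_2(\triangle_\mathcal{C})=\frac1d\sum_{i=0}^{d-1}y_i^2=:Q$, which is a nonzero polynomial. Combining (i) and (ii) gives
\[
\Phi_n\big((s\otimes\triangle_\mathcal{C}^{\otimes\ell})^\vee\big)=\Phi_n\big(s\otimes\triangle_\mathcal{C}^{\otimes\ell}\big)=\Phi_{n-2\ell}(s)\,Q^\ell .
\]
Now suppose $(s\otimes\triangle_\mathcal{C}^{\otimes\ell})^\vee=0$. Then $\Phi_{n-2\ell}(s)Q^\ell=0$ in the integral domain $F[y_0,\ldots,y_{d-1}]$. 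Since $Q\neq0$, this forces $\Phi_{n-2\ell}(s)=0$, and by (iii) applied with $k=n-2\ell$ we get $s=0$.

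The step I expect to need the most care is (iii), the injectivity of $\Phi_k$ on $\Sym^k\mathcal{C}_F$. This is where the symmetrization basis and $\operatorname{char}F=0$ enter: the symmetrized monomial for a multiset maps to exactly that monomial, with coefficient $1$ since $\Phi_k$ is invariant under permutations. Everything else is formal multiplicativity. The nonassociative multiplication of $\mathcal{C}$ plays no role at all, since only the vector-space structure of $\mathcal{C}_F^{\otimes n}$ and the map $t\mapsto t^\vee$ are involved.
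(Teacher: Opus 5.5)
Your proof is correct, and it takes a genuinely different route from the paper's. The paper does not prove the lemma from scratch. It quotes the case $\ell=1$ from \cite[Lemma 3.3]{Raz24} for $\mathbb{H}$ and $\mathbb{O}$, and for $\mathbb{R}[e_1]$ it only asserts that a ``similar proof'' works. It then inducts on $\ell$: using \eqref{eq_sym_1b}, it writes $(s\otimes\triangle_\mathcal{C}^{\otimes\ell})^\vee$ as the symmetrization of $(s\otimes\triangle_\mathcal{C}^{\otimes(\ell-1)})^\vee\otimes\triangle_\mathcal{C}$ and peels off one factor of $\triangle_\mathcal{C}$ at a time.

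You instead identify $\Sym^k\mathcal{C}_F$ with the homogeneous polynomials of degree $k$ in $d$ variables. Under this identification the symmetrized tensor product becomes ordinary multiplication, and $\triangle_\mathcal{C}$ becomes the nonzero quadratic form $\frac{1}{d}\sum_i y_i^2$. The lemma then reduces to the absence of zero divisors in $F[y_0,\ldots,y_{d-1}]$, for all $\ell$ at once and uniformly in $d$ and $F$.

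What your version buys:
\begin{itemize}
\item It is self-contained, including the case $\mathbb{R}[e_1]$ that the paper leaves to a ``similar proof''.
\item It shows the statement is purely linear-algebraic: the multiplication of $\mathcal{C}$ never enters, and $\triangle_\mathcal{C}$ could be replaced by any nonzero element of $\Sym^2\mathcal{C}_F$.
\item Your map $\Phi$ also justifies the commutativity and associativity of the product $(x_1,x_2)\mapsto(x_1\otimes x_2)^\vee$. The paper uses this tacitly when it applies Waring's formula in the proof of Theorem~\ref{Theorem1}.
\end{itemize}

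The paper's route is shorter only because the substance of the $\ell=1$ case is delegated to \cite{Raz24}; its induction step is purely formal.
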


\begin{proof}
The case $\ell=1$ is \cite[Lemma 3.3]{Raz24} for $\mathcal{C}=\mathbb{H}$ or $\mathcal{C}=\mathbb{O}$
and it holds also for $\mathcal{C}=\mathbb{R}[e_1]$ by a similar proof.
Applying it $\ell$ times, using \eqref{eq_sym_1b}, proves the lemma.
\end{proof}

For the rest of this work, $\mathcal{C}=\mathbb{H}$ or $\mathcal{C}=\mathbb{O}$.
We suppose that $n\ge2$. Denote
\begin{equation}
\label{trianglen}
\triangle_\mathcal{C}^{(n)}=(\triangle_\mathcal{C}\otimes1^{\otimes(n-2)})^\vee
\end{equation}
in $\Sym^n\mathcal{C}$.

\begin{proposition}[see {\cite[Prop.\ 5.3]{Raz24}}]
\label{LocalGlobal}
Let $r\in\Sym^n\mathcal{C}$. Then, $r\in\Sym^n\mathcal{C}\cdot\triangle_\mathcal{C}^{(n)}$ 
if and only if $r_e=0$ for each $e\in\Imag_1\mathcal{C}$.
\end{proposition}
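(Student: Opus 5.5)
The plan is to prove the forward implication by a direct computation on spanning elements, and the converse by showing that the two relevant subspaces of $\Sym^n\mathcal{C}$ have the same dimension.

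\emph{Forward implication.} By Lemma~\ref{span}, $\Sym^n\mathcal{C}$ is spanned by the elements $x^{\otimes n}$, so it suffices to treat $r=x^{\otimes n}\triangle_\mathcal{C}^{(n)}$. By \eqref{eq_sym_1c}, $x^{\otimes n}(\triangle_\mathcal{C}\otimes1^{\otimes(n-2)})^\vee=\big(x^{\otimes n}(\triangle_\mathcal{C}\otimes 1^{\otimes(n-2)})\big)^\vee$. By Lemma~\ref{triangleLemma},
\[
x^{\otimes n}(\triangle_\mathcal{C}\otimes 1^{\otimes(n-2)})=(x\otimes x)\triangle_\mathcal{C}\otimes x^{\otimes(n-2)}=N(x)\,\triangle_\mathcal{C}\otimes x^{\otimes(n-2)}.
\]
Since $(y^\vee)_e=y_e$ and the map \eqref{epartn} is multiplicative in the tensor factors, we get $r_e=N(x)\,(\triangle_\mathcal{C})_e\,(x_e)^{n-2}$. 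By Lemma~\ref{epart}, with $e=\sum\alpha_ie_i$,
\[
(\triangle_\mathcal{C})_e=\frac{1}{d}\Big(1+\sum_{i\ge1}\alpha_i^2e^2\Big)=\frac1d(1-1)=0 .
\]
Hence $r_e=0$ for every $e\in\Imag_1\mathcal{C}$.

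\emph{Converse.} Consider the linear map $\Phi\colon\Sym^n\mathcal{C}\to\prod_{e\in\Imag_1\mathcal{C}}\mathbb{R}[e]$ given by $r\mapsto(r_e)_e$. The forward implication gives $\Sym^n\mathcal{C}\cdot\triangle_\mathcal{C}^{(n)}\subseteq\Ker\Phi$. It then suffices to prove
\[
\dim\Sym^n\mathcal{C}-\dim\big(\Sym^n\mathcal{C}\cdot\triangle_\mathcal{C}^{(n)}\big)\le\dim\Imag\Phi .
\]

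For the left-hand side, I will use the structure from \cite{Raz24}. The ideal $\Sym^n\mathcal{C}\cdot\triangle_\mathcal{C}^{(n)}$ is $\bigoplus_{m<n}\mathbb{S}_m^{(n)}\mathcal{C}$, so the quotient is $\mathbb{S}^{(n)}_n\mathcal{C}=T_n\mathcal{C}$. Alternatively, Lemma~\ref{zeroCond} together with \eqref{eq_sym_1b} identifies the ideal with $\{(s\otimes\triangle_\mathcal{C})^\vee\mid s\in\Sym^{n-2}\mathcal{C}\}$. Its dimension is then $\dim\Sym^{n-2}\mathcal{C}$, so the quotient has dimension $\binom{n+d-1}{n}-\binom{n+d-3}{n-2}$. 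This is the dimension of the space of harmonic-type polynomials: homogeneous polynomials of degree $n$ in $d$ variables modulo multiples of the quadratic form $N$.

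For the right-hand side, write $x=\alpha_0+v$ with $v$ imaginary. Then $(x^{\otimes n})_e=(\alpha_0+\langle v,e\rangle e)^n=P_x(e)+Q_x(e)\,e$, where $P_x$ and $Q_x$ are polynomial functions of $e$ on the unit sphere of $\Imag\mathcal{C}$, obtained from the even and odd powers of $\langle v,e\rangle$ respectively. I would show that the resulting map from $\Sym^n\mathcal{C}$, viewed as the dual of degree-$n$ forms on $\mathcal{C}$, to pairs of functions on the sphere is, after this identification, the restriction of degree-$n$ forms on $\mathcal{C}$ to the cone $N=0$. Concretely, substituting $e\mapsto \sqrt{-1}\,e$-type real points $(\alpha_0,v)$ with $\alpha_0^2=-|v|^2$ over $\mathbb{C}$ should realize this. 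Restriction to the quadric $\{N=0\}$ has kernel exactly $N\cdot(\text{forms of degree }n-2)$, because $N$ is irreducible for $d\ge3$. That yields $\dim\Imag\Phi=\binom{n+d-1}{n}-\binom{n+d-3}{n-2}$, as required.

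The main obstacle is this last identification: making precise that vanishing of all $r_e$ on the real sphere is equivalent to vanishing of the associated form on the isotropic cone. I would first try to complexify, working in $\mathcal{C}_\mathbb{C}$ with $e$ ranging over $\Imag_1\mathcal{C}_\mathbb{C}$, and use Zariski density of the real sphere in the complex sphere so that polynomial identities extend. If that identification proves awkward, the fallback is to compute $\dim\Imag\Phi$ directly. For this I would evaluate on $x=(\cos\theta+\sin\theta\, e_i)^{\otimes n}$ and use the harmonic decomposition of polynomials restricted to the sphere $S^{d-2}$.
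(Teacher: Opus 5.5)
The paper does not prove this proposition. It imports it from \cite[Prop.\ 5.3]{Raz24} and uses it as a black box (e.g.\ in Lemma~\ref{zeroIntersection}). Your argument is therefore a self-contained alternative, and it is correct in substance; two points need to be written out, described below. Your forward direction is complete.

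For the converse, the identification you are reaching for can be made exact. Let $r\mapsto p_r$ be the standard isomorphism of $\Sym^n\mathcal{C}_\mathbb{C}$ onto degree-$n$ forms in $y_0,\ldots,y_{d-1}$, with $p_{x^{\otimes n}}(y)=(\sum_i x_iy_i)^n$.
\begin{itemize}
\item Under $e\mapsto\sqrt{-1}$, $r_e$ becomes $p_r(\lambda_e)$ with $\lambda_e=(1,\sqrt{-1}\alpha_1,\ldots,\sqrt{-1}\alpha_{d-1})$, which is isotropic for $N$.
\item One also has $p_{(\triangle_\mathcal{C}\otimes s)^\vee}=\frac1d N\,p_s$.
\item The cone must be the complex one; the real cone $N=0$ is $\{0\}$.
\end{itemize}
You can pass from the real sphere to the complex cone without Zariski density. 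Divide $p_r$ by $N=y_0^2+\sum_{i\ge1}y_i^2$ as a polynomial in $y_0$, giving $p_r=Nq+y_0h_1+h_0$, with all pieces real when $r$ is real. Since $-e\in\Imag_1\mathcal{C}$, you have vanishing at both $\lambda_e$ and $\lambda_{-e}$. This forces $h_0=h_1=0$ on $\sqrt{-1}\,S^{d-2}$, hence identically by homogeneity, so $N\mid p_r$.

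This gives $\Ker\Phi=\{(\triangle_\mathcal{C}\otimes s)^\vee\mid s\in\Sym^{n-2}\mathcal{C}\}$ directly. The dimension count, $T_n\mathcal{C}$, and Lemma~\ref{zeroCond} then become unnecessary.

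What you still have to supply is that this set equals $\Sym^n\mathcal{C}\cdot\triangle_\mathcal{C}^{(n)}$. Lemma~\ref{zeroCond} only gives injectivity of $s\mapsto(s\otimes\triangle_\mathcal{C})^\vee$, not this equality. By your own computation $x^{\otimes n}\triangle_\mathcal{C}^{(n)}=N(x)(\triangle_\mathcal{C}\otimes x^{\otimes(n-2)})^\vee$, the equality amounts to the elements $N(x)x^{\otimes(n-2)}$ spanning $\Sym^{n-2}\mathcal{C}$. That holds because a form $F$ with $N(x)F(x)=0$ on $\mathbb{R}^d$ must vanish, $N$ being positive definite.

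Compared with the citation, your route has three advantages. It is independent of the structure theory of \cite{Raz24}. It treats $\mathbb{H}$ and $\mathbb{O}$ uniformly, since everything is linear in $r$ and non-associativity never enters. It also explains the statement conceptually: the ideal is $N$ times forms of degree $n-2$, and $T_n\mathcal{C}$ is forms modulo $N$. The citation buys brevity and keeps the paper inside its $e$-part formalism. Your approach is close in spirit to the paper's own proof of Lemma~\ref{zeroIntersection}, which likewise turns $r_e=0$ into a polynomial identity in the coordinates of $e$.
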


For the next result we need the following combinatorial lemma of Noga Alon:
\begin{lemma}[see {\cite[Lemma 2.1]{Alo99}}]
\label{Noga}
Let $P=P(X_1,\dots,X_k)$ be a polynomial in $k$ variables over an arbitrary field $K$.
Suppose that the degree of $P$ as a polynomial in $X_i$ is at most $m_i$ for $1\le i\le k$, 
and let $S_i\subset K$ be a set of at least $m_i+1$ distinct elements of $K$.
If $P(x_1,\dots,x_k)=0$ for all $k$-tuples $(x_1,\dots,x_k)\in S_1\times\cdots\times S_k$, then $P\equiv0$.
\end{lemma}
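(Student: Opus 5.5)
The plan is to argue by induction on the number $k$ of variables, using only the elementary fact that a nonzero polynomial in one variable over a field $K$ of degree at most $m$ has at most $m$ roots in $K$.

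For $k=1$, $P(X_1)$ has degree at most $m_1$ and vanishes at the at least $m_1+1$ distinct points of $S_1$. If $P$ were nonzero it would have more roots than its degree allows, so $P\equiv0$.

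For the inductive step, suppose $k\ge2$ and that the statement holds for $k-1$ variables. I will expand $P$ in powers of the last variable:
\[
P(X_1,\dots,X_k)=\sum_{j=0}^{m_k}P_j(X_1,\dots,X_{k-1})\,X_k^j ,
\]
where each coefficient $P_j\in K[X_1,\dots,X_{k-1}]$. Since each monomial of $P_j$ comes from a monomial of $P$, the degree of $P_j$ in $X_i$ is at most $m_i$ for $1\le i\le k-1$.

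Now fix $(x_1,\dots,x_{k-1})\in S_1\times\cdots\times S_{k-1}$. The one-variable polynomial
\[
Q(X_k)=\sum_{j=0}^{m_k}P_j(x_1,\dots,x_{k-1})X_k^j
\]
has degree at most $m_k$ and vanishes on all of $S_k$, which has at least $m_k+1$ elements. By the case $k=1$, $Q\equiv0$, so $P_j(x_1,\dots,x_{k-1})=0$ for every $j$. As the point $(x_1,\dots,x_{k-1})$ was arbitrary, each $P_j$ vanishes on $S_1\times\cdots\times S_{k-1}$ and satisfies the degree hypotheses there. The induction hypothesis then gives $P_j\equiv0$ for all $j$, and therefore $P\equiv0$.

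There is no real obstacle in this argument. The one point needing care is checking that the coefficients $P_j$ inherit the bounds $\deg_{X_i}P_j\le m_i$, so that the induction hypothesis applies to them. This is immediate from the monomial expansion of $P$.
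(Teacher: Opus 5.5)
Your proposal is correct. It is essentially the standard argument: induction on the number of variables, expanding $P$ in powers of $X_k$ and applying the one-variable root bound to the coefficients. This is the proof in Alon's paper, which this paper cites for the lemma without reproducing it.
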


\begin{lemma}
\label{zeroIntersection}
\begin{enumerate}[label={\rm(\alph*)},leftmargin=*]
\item
$\Sym^n\mathbb{R}[e_1]\cap\Sym^n\mathbb{H}\cdot\triangle_\mathbb{H}^{(n)}=\{0\}$.

\item
$\Sym^n\mathbb{H}\cap\Sym^n\mathbb{O}\cdot\triangle_\mathbb{O}^{(n)}=\{0\}$.
\end{enumerate}
\end{lemma}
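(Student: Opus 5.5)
We will prove both parts with one argument, based on Proposition~\ref{LocalGlobal}. For (a), let $r\in\Sym^n\mathbb{R}[e_1]\cap\Sym^n\mathbb{H}\cdot\triangle_\mathbb{H}^{(n)}$. By Proposition~\ref{LocalGlobal} (with $\mathcal{C}=\mathbb{H}$), $r_e=0$ for every $e\in\Imag_1\mathbb{H}$. We must show $r=0$.

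By Lemma~\ref{span} (over $\mathbb{R}[e_1]$), we can write $r$ in the symmetrized monomial basis
\[
r=\sum_{j=0}^n c_j\big(1^{\otimes(n-j)}\otimes e_1^{\otimes j}\big)^\vee ,\qquad c_j\in\mathbb{R}.
\]
Take $e=\alpha_1e_1+\alpha_2e_2+\alpha_3e_3\in\Imag_1\mathbb{H}$. Lemma~\ref{epart} gives $(e_0)_e=1$ and $(e_1)_e=\alpha_1e$. Since $(x^\vee)_e=x_e$, we get
\[
r_e=\sum_j c_j\alpha_1^je^j .
\]
Write $e^j=(-1)^{\lfloor j/2\rfloor}e^{j\bmod 2}$, and note that $1$ and $e$ are $\mathbb{R}$-independent. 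Then $r_e=0$ becomes two conditions:
\[
\sum_{j\ \mathrm{even}}(-1)^{j/2}c_j\alpha_1^j=0,\qquad \sum_{j\ \mathrm{odd}}(-1)^{(j-1)/2}c_j\alpha_1^j=0 .
\]
These hold for every $\alpha_1\in[-1,1]$, since we may choose $\alpha_2,\alpha_3$ to complete $e$ to a unit. Each is a one-variable polynomial of degree at most $n$ vanishing on infinitely many points. By Lemma~\ref{Noga} with $k=1$, all coefficients vanish, so every $c_j=0$ and $r=0$. The point is that the space $\Imag_1\mathbb{H}$ is large enough to separate the basis of $\Sym^n\mathbb{R}[e_1]$.

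For (b) we argue the same way in several variables. Take $r\in\Sym^n\mathbb{H}\cap\Sym^n\mathbb{O}\cdot\triangle_\mathbb{O}^{(n)}$, so $r_e=0$ for all $e\in\Imag_1\mathbb{O}$ by Proposition~\ref{LocalGlobal} with $\mathcal{C}=\mathbb{O}$. Write $r$ in the basis of symmetrized monomials
\[
\big(1^{\otimes j_0}\otimes e_1^{\otimes j_1}\otimes e_2^{\otimes j_2}\otimes e_3^{\otimes j_3}\big)^\vee,\qquad j_0+\cdots+j_3=n .
\]
For $e=\sum_{i=1}^7\alpha_ie_i\in\Imag_1\mathbb{O}$, Lemma~\ref{epart} gives
\[
r_e=\sum c_{j}\,\alpha_1^{j_1}\alpha_2^{j_2}\alpha_3^{j_3}\,e^{\,j_1+j_2+j_3}.
\]
Splitting by parity of $j_1+j_2+j_3$ yields two real polynomials $P_0,P_1$ in $\alpha_1,\alpha_2,\alpha_3$. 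They vanish whenever $\alpha_1^2+\alpha_2^2+\alpha_3^2\le1$, because the remaining coordinates $\alpha_4,\dots,\alpha_7$ absorb the rest of the norm. Lemma~\ref{Noga}, with $S_i$ a set of $n+1$ distinct small numbers, say in $[-1/2,1/2]$ so that every triple lies in the ball, forces $P_0\equiv P_1\equiv0$.

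Distinct exponent triples $(j_1,j_2,j_3)$ give distinct monomials, and $j_0$ is determined by $n-j_1-j_2-j_3$. Hence all $c_j$ vanish and $r=0$.

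The main obstacle is the hidden role of the extra room. In (b) we need $\alpha_4,\ldots,\alpha_7$ to satisfy the unit-norm constraint while $\alpha_1,\alpha_2,\alpha_3$ range over an open box; in (a), $\alpha_2,\alpha_3$ play the same role. We also need the basis expansion to be genuinely a basis, so that vanishing coefficients imply $r=0$. Linear independence of the symmetrized monomials follows from their having disjoint supports in the standard tensor basis, and we will note this explicitly.
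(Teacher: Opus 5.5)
Your proposal is correct and follows essentially the same route as the paper: Proposition~\ref{LocalGlobal} reduces the question to $r_e=0$ for all unit imaginaries $e$, Lemma~\ref{epart} turns $r_e$ into a polynomial in $\alpha_1$ (resp.\ $\alpha_1,\alpha_2,\alpha_3$), and Lemma~\ref{Noga} on a product set inside the unit ball kills all coefficients of the symmetrized-monomial basis. The differences are cosmetic. You split $r_e$ into its $1$- and $e$-components by parity, whereas the paper identifies $\mathbb{R}[e]\cong\mathbb{C}$ via $e\mapsto\sqrt{-1}$. You also take $S_i\subset[-\tfrac12,\tfrac12]$ where the paper uses $\alpha^2\le\tfrac13$.
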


\begin{proof}
The set $\big\{(e_0^{\otimes k_0}\otimes e_1^{\otimes k_1})^\vee\mid k_0+k_1=n\big\}$
forms a basis of $\Sym^n\mathbb{R}[e_1]$ over $\mathbb{R}$.
Let $r\in\Sym^n\mathbb{R}[e_1]$ and write
\begin{equation*}
r=\sum_{k_0+k_1=n}\beta_{(k_0,k_1)}\cdot(e_0^{\otimes k_0}\otimes e_1^{\otimes k_1})^\vee,
\end{equation*}
with $\beta_{(k_0,k_1)}\in\mathbb{R}$.
By Proposition~\ref{LocalGlobal}, applied to $\mathcal{C}=\mathbb{H}$,
$r\in\Sym^n\mathbb{H}\cdot\triangle_\mathbb{H}^{(n)}$ if and only if 
$r_e=0$ for each $e\in\Imag_1\mathbb{H}$.

Let $e=\sum_{i=1}^3\alpha_ie_i\in\Imag_1\mathbb{H}$, with $\alpha_i\in\mathbb{R}$ that satisfy $\sum_{i=1}^3\alpha_i^2=1$.
By \eqref{epartn} and Lemma~\ref{epart}, applied to $\mathcal{C}=\mathbb{H}$,
\begin{equation}
\label{re}
r_e=\sum_{k_0+k_1=n}\beta_{(k_0,k_1)}\alpha_1^{k_1}\cdot e^{k_1}.
\end{equation}
Since $e^2=-1$, the map $e\mapsto\sqrt{-1}$ defines a field isomorphism $\mathbb{R}[e]\cong\mathbb{C}=\mathbb{R}[\sqrt{-1}]$.
By \eqref{re}, $r_e=0$ if and only if
\begin{equation}
\label{re0}
\sum_{k_0+k_1=n}\beta_{(k_0,k_1)}(\sqrt{-1}\alpha_1)^{k_1}=0.
\end{equation}

Consider the polynomial $Q(Y) =\sum_{k_0+k_1=n}\beta_{(k_0,k_1)}\big(\sqrt{-1}Y\big)^{k_1}$ in $\mathbb{C}[Y]$.
By \eqref{re0}, $r_e=0$ for each $e\in\Imag_1\mathbb{H}$ if and only if
$Q(\alpha_1)=0$ for each $(\alpha_1,\alpha_2,\alpha_3)\in\mathbb{R}^3$ with $\alpha_1^2+\alpha_2^2+\alpha_3^2=1$.
Let $S=\{\alpha\in\mathbb{R}\mid\alpha^2\le1\}$. Then, for each $\alpha_1\in S$, 
we can find $\alpha_2,\alpha_3\in\mathbb{R}$ with $\alpha_1^2+\alpha_2^2+\alpha_3^2=1$.
Thus, $r\in\Sym^n\mathbb{H}\cdot\triangle_\mathbb{H}^{(n)}$ if and only if
$Q(\alpha_1)=0$ for each $\alpha_1\in S$.
In this case, it follows from Lemma~\ref{Noga} that $Q\equiv0$, 
so $\beta_{(k_0,k_1)}=0$ for all $(k_0,k_1)$ with $k_0+k_1=n$, which implies $r=0$.
This concludes the proof of (a).

The proof of (b) is done in a similar way.
The set 
\begin{equation*}
\big\{(e_0^{\otimes k_0}\otimes e_1^{\otimes k_1}\otimes e_2^{\otimes k_2}\otimes e_3^{\otimes k_3})^\vee
\mid k_0+k_1+k_2+k_3=n\big\}
\end{equation*}
forms a basis of $\Sym^n\mathbb{H}$ over $\mathbb{R}$.
Let $r\in\Sym^n\mathbb{H}$ and write
\begin{equation*}
r=\sum_{k_0+k_1+k_2+k_3=n}\beta_{(k_0,k_1,k_2,k_3)}\cdot
(e_0^{\otimes k_0}\otimes e_1^{\otimes k_1}\otimes e_2^{\otimes k_2}\otimes e_3^{\otimes k_3})^\vee,
\end{equation*}
with $\beta_{(k_0,k_1,k_2,k_3)}\in\mathbb{R}$.
By Proposition~\ref{LocalGlobal}, applied to $\mathcal{C}=\mathbb{O}$,
$r\in\Sym^n\mathbb{O}\cdot\triangle_\mathbb{O}^{(n)}$ if and only if 
$r_e=0$ for each $e\in\Imag_1\mathbb{O}$.

Let $e=\sum_{i=1}^7\alpha_ie_i\in\Imag_1\mathbb{O}$, with $\alpha_i\in\mathbb{R}$ that satisfy $\sum_{i=1}^7\alpha_i^2=1$.
By \eqref{epartn} and Lemma~\ref{epart}, applied to $\mathcal{C}=\mathbb{O}$,
\begin{equation}
\label{reH}
r_e=\sum_{k_0+k_1+k_2+k_3=n}\beta_{(k_0,k_1,k_2,k_3)}\alpha_1^{k_1}\alpha_2^{k_2}\alpha_3^{k_3}
\cdot e^{k_1+k_2+k_3}.
\end{equation}
As in part (a), we get by \eqref{reH} that $r_e=0$ if and only if
\begin{equation}
\label{reH0}
\sum_{k_0+k_1+k_2+k_3=n}\beta_{(k_0,k_1,k_2,k_3)}
(\sqrt{-1}\alpha_1)^{k_1}(\sqrt{-1}\alpha_2)^{k_2}(\sqrt{-1}\alpha_3)^{k_3}=0.
\end{equation}

Consider the polynomial 
\begin{equation*}
Q(Y_1,Y_2,Y_3) =\sum_{k_0+k_1+k_2+k_3=n}
\beta_{(k_0,k_1,k_2,k_3)}\big(\sqrt{-1}Y_1\big)^{k_1}\big(\sqrt{-1}Y_2\big)^{k_2}\big(\sqrt{-1}Y_3\big)^{k_3}
\end{equation*}
in $\mathbb{C}[Y_1,Y_2,Y_3]$.
By \eqref{reH0}, $r_e=0$ for each $e\in\Imag_1\mathbb{O}$ if and only if
$Q(\alpha_1,\alpha_2,\alpha_3)=0$ for each 
$(\alpha_1,\alpha_2,\alpha_3,\alpha_4,\alpha_5,\alpha_6,\alpha_7)\in\mathbb{R}^7$ 
with $\sum_{i=1}^7\alpha_i^2=1$.
Let $S=\{\alpha\in\mathbb{R}\mid\alpha^2\le\frac{1}{3}\}$.
Then, for each $(\alpha_1,\alpha_2,\alpha_3)\in S^3$, 
we can find $\alpha_4,\alpha_5,\alpha_6,\alpha_7\in\mathbb{R}$ with $\sum_{i=1}^7\alpha_i^2=1$.
Thus, $r\in\Sym^n\mathbb{O}\cdot\triangle_\mathbb{O}^{(n)}$ only if
$Q(\alpha_1,\alpha_2,\alpha_3)=0$ for each $(\alpha_1,\alpha_2,\alpha_3)\in S^3$.
In this case, it follows from Lemma~\ref{Noga} that $Q\equiv0$, 
so $\beta_{(k_0,k_1,k_2,k_3)}=0$ for all $(k_0,k_1,k_2,k_3)$ with $k_0+k_1+k_2+k_3=n$, which implies $r=0$.
This concludes the proof of (b).
\end{proof}

\begin{lemma}
\label{center}
$\triangle_\mathcal{C}\otimes1^{\otimes(n-2)}$ commutes and associates with all elements of $\Sym^n\mathcal{C}_F$.
In particular, $\triangle_\mathcal{C}^{(n)}$ lies in the center of $\Sym^n\mathcal{C}_F$.
Hence, $F[\triangle_\mathcal{C}^{(n)}]$ is a commutative and associative $F$-algebra.
\end{lemma}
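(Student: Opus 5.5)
The plan is to reduce everything to $n=2$ and then symmetrize. By Lemma~\ref{span} the elements $x^{\otimes n}$, $x\in\mathcal{C}_F$, span $\Sym^n\mathcal{C}_F$. So it suffices to show that $t:=\triangle_\mathcal{C}\otimes1^{\otimes(n-2)}$ commutes and associates with every $x^{\otimes n}$, and then extend by linearity.

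\emph{Commutation.} Componentwise in $\mathcal{C}_F^{\otimes n}$ we have
\[
t\,x^{\otimes n}=\big(\triangle_\mathcal{C}(x\otimes x)\big)\otimes x^{\otimes(n-2)}
\quad\text{and}\quad
x^{\otimes n}\,t=\big((x\otimes x)\triangle_\mathcal{C}\big)\otimes x^{\otimes(n-2)}.
\]
By Lemma~\ref{triangleLemma} both equal $N(x)\,\triangle_\mathcal{C}\otimes x^{\otimes(n-2)}$.

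\emph{Associativity.} We need the associator $(p,q,r)$ to vanish whenever one entry is $t$ and the other two lie in $\Sym^n\mathcal{C}_F$. By trilinearity we may take the other two entries of the form $x^{\otimes n}$ and $y^{\otimes n}$. Multiplication in $\mathcal{C}_F^{\otimes n}$ is componentwise, so each associator splits as a tensor product: the first two factors form a product in $\mathcal{C}_F\otimes\mathcal{C}_F$ involving $\triangle_\mathcal{C}$, $x\otimes x$ and $y\otimes y$, and the remaining $n-2$ factors are the $1$'s, $x$'s and $y$'s.

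The last $n-2$ factors involve $1$ together with two elements of $\mathcal{C}_F$. Any two elements of an alternative algebra generate an associative subalgebra (Artin's theorem), and the octonions are alternative, so these factors associate.

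The first two factors reduce to the claim that $\triangle_\mathcal{C}$ associates with $x\otimes x$ and $y\otimes y$ in all orders. Lemma~\ref{triangleLemma} handles this: it gives $\triangle_\mathcal{C}(u\otimes u)=N(u)\triangle_\mathcal{C}=(u\otimes u)\triangle_\mathcal{C}$ for every $u\in\mathcal{C}_F$, and the lemma already states that $\triangle_\mathcal{C}$ associates with all of $\Sym^2\mathcal{C}_F$. Combining this with multiplicativity $N(xy)=N(x)N(y)$ gives, for instance,
\[
(\triangle_\mathcal{C}(x\otimes x))(y\otimes y)=N(x)N(y)\triangle_\mathcal{C}=\triangle_\mathcal{C}(xy\otimes xy)=\triangle_\mathcal{C}((x\otimes x)(y\otimes y)),
\]
and the other orderings are checked the same way.

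The main obstacle is the mixed case. The tensor factorization of an associator into a product of per-factor associators is only legitimate if, on each factor, the product in the same bracketing is computed. Since both sides of $(pq)r=p(qr)$ factor componentwise, the identity holds as soon as each component is associative for the given triple. That is exactly what the two previous paragraphs establish: the $\triangle$ part by Lemma~\ref{triangleLemma}, and the rest by alternativity.

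\emph{Center and symmetrization.} Write $\triangle_\mathcal{C}^{(n)}=t^\vee=\frac1{n!}\sum_\sigma\sigma(t)$. Each $\sigma$ is an algebra automorphism of $\mathcal{C}_F^{\otimes n}$ fixing $\Sym^n\mathcal{C}_F$ pointwise. Hence every $\sigma(t)$ also commutes and associates with $\Sym^n\mathcal{C}_F$, and so does their average. Thus $\triangle_\mathcal{C}^{(n)}\in\Sym^n\mathcal{C}_F$ lies in the center.

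\emph{The algebra $F[\triangle_\mathcal{C}^{(n)}]$.} Powers of a central element are well defined and central: by induction using associativity with $\triangle_\mathcal{C}^{(n)}$, each power again commutes and associates with everything. Therefore $F[\triangle_\mathcal{C}^{(n)}]$ is a commutative, associative $F$-algebra.
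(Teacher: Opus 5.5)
Your proposal is correct and follows essentially the same route as the paper. You reduce via Lemma~\ref{span} to the spanning elements $x^{\otimes n}$, get commutation and the three associativity identities from Lemma~\ref{triangleLemma}, $N(xy)=N(x)N(y)$ and componentwise multiplication, and then pass to $\triangle_\mathcal{C}^{(n)}$ by symmetrization. Your automorphism argument is exactly what underlies \eqref{eq_sym_1c}, which the paper cites; the only inessential difference is your appeal to Artin's theorem for the last $n-2$ factors, where the fact that one entry there is $1$ already suffices.
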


\begin{proof}
We follow the proof of \cite[Lemma 4.1]{Raz21}.
By Lemma~\ref{triangleLemma}, for all $x,y\in\mathcal{C}_F$,
\begin{equation*}
(\triangle_\mathcal{C}\otimes1^{\otimes(n-2)})x^{\otimes n}
=N(x)(\triangle_\mathcal{C}\otimes x^{\otimes(n-2)})=x^{\otimes n}(\triangle_\mathcal{C}\otimes1^{\otimes(n-2)})
\end{equation*}
and
\begin{align*}
(\triangle_\mathcal{C}\otimes1^{\otimes(n-2)} \, x^{\otimes n})y^{\otimes n}
&=N(x)(\triangle_\mathcal{C}\otimes x^{\otimes(n-2)}) y^{\otimes n}\\
&=N(x)N(y)(\triangle_\mathcal{C}\otimes x^{\otimes(n-2)}y^{\otimes(n-2)})
= N(xy)(\triangle_\mathcal{C}\otimes(xy)^{\otimes(n-2)})\\
&=(\triangle_\mathcal{C}\otimes1^{\otimes(n-2)})(xy)^{\otimes n}
=(\triangle_\mathcal{C}\otimes1^{\otimes(n-2)})(x^{\otimes n}y^{\otimes n})\,.
\end{align*}
Similarly, we obtain the following identities:
\begin{align*}
(x^{\otimes n}(\triangle_\mathcal{C}\otimes 1^{\otimes(n-2)}))y^{\otimes n}
&= x^{\otimes n}((\triangle_\mathcal{C}\otimes 1^{\otimes(n-2)})y^{\otimes n}), \\
(x^{\otimes n}y^{\otimes n})(\triangle_\mathcal{C}\otimes 1^{\otimes(n-2)})
&= x^{\otimes n}(y^{\otimes n}(\triangle_\mathcal{C}\otimes 1^{\otimes(n-2)})).
\end{align*}

It follows from Lemma~\ref{span} that $\triangle_\mathcal{C}\otimes1^{\otimes(n-2)}$ 
commutes and associates with all elements of $\Sym^n\mathcal{C}_F$.
By \eqref{trianglen} and \eqref{eq_sym_1c}, $\triangle_\mathcal{C}^{(n)}$ also
commutes and associates with all elements of $\Sym^n\mathcal{C}_F$.
\end{proof}

For each integer $m$ between $\left\lceil \frac{n}{2} \right\rceil$ and $n$ we denote
\begin{equation}
\label{beta}
\beta_{m,\mathcal{C}}^{(n)}=\frac{2(n-m)(2m+d-2)}{dn(n-1)}.
\end{equation}
Then, $\beta_{m_1,\mathcal{C}}^{(n)}\ne\beta_{m_2,\mathcal{C}}^{(n)}$ for integers $m_1\ne m_2$ 
between $\left\lceil \frac{n}{2} \right\rceil$ and $n$ \cite[Sec.\ 3]{Raz24}.
The $\beta_{m,\mathcal{C}}^{(n)}$'s satisfy the following identity:
\begin{equation}
\label{betaIdentity}
n(n-1)\big(\beta_{k,\mathcal{C}}^{(n)}-\beta_{\ell,\mathcal{C}}^{(n)}\big)=
(n-2)(n-3)\big(\beta_{k-1,\mathcal{C}}^{(n-2)}-\beta_{\ell-1,\mathcal{C}}^{(n-2)}\big)
\end{equation}
for all integers $k,\ell$ between $\left\lceil \frac{n}{2} \right\rceil$ and $n$.
Indeed, direct calculation gives that both the left and right hand sides of \eqref{betaIdentity} are equal to
\begin{equation*}
\frac{2}{d}(k-\ell)\big(2n-(2k+2\ell+d-2)\big).
\end{equation*}

For each $m$ between $\left\lceil \frac{n}{2} \right\rceil$ and $n$, we denote
\begin{equation}
\label{emn}
e_{m,\mathcal{C}}^{(n)}=\prod_{\substack{\left\lceil \frac{n}{2} \right\rceil\le m'\le n\\ m'\ne m}}
\frac{\triangle_\mathcal{C}^{(n)}-\beta_{m',\mathcal{C}}^{(n)}\cdot1^{\otimes n}}
{\beta_{m,\mathcal{C}}^{(n)}-\beta_{m',\mathcal{C}}^{(n)}}.
\end{equation}
Note that by Lemma~\ref{center}, $e_{m,\mathcal{C}}^{(n)}$ is well-defined.
We also denote $e_{0,\mathcal{C}}^{(0)}=1_\mathbb{R}$ and $e_{1,\mathcal{C}}^{(1)}=1_\mathcal{C}$.
In particular, since $\beta_{1,\mathcal{C}}^{(2)}=1$ and $\beta_{2,\mathcal{C}}^{(2)}=0$,

\begin{subequations}\label{e2}
\begin{equation}
\label{e21}
e_{1,\mathcal{C}}^{(2)}=\frac{\triangle_\mathcal{C}-\beta_{2,\mathcal{C}}^{(2)}\cdot 1\otimes1}
{\beta_{1,\mathcal{C}}^{(2)}-\beta_{2,\mathcal{C}}^{(2)}}=\triangle_\mathcal{C},
\end{equation}
\begin{equation}
\label{e22}
e_{2,\mathcal{C}}^{(2)}=\frac{\triangle_\mathcal{C}-\beta_{1,\mathcal{C}}^{(2)}\cdot 1\otimes1}
{\beta_{2,\mathcal{C}}^{(2)}-\beta_{1,\mathcal{C}}^{(2)}}=1\otimes1-\triangle_\mathcal{C}.
\end{equation}
\end{subequations}
Similarly, since $\beta_{2,\mathcal{C}}^{(3)}=\frac{d+2}{3d}$ and $\beta_{3,\mathcal{C}}^{(3)}=0$,
\begin{subequations}\label{e3}
\begin{equation}
\label{e32}
e_{2,\mathcal{C}}^{(3)}=\frac{\triangle_\mathcal{C}^{(3)}-\beta_{3,\mathcal{C}}^{(3)}\cdot 1^{\otimes 3}}
{\beta_{2,\mathcal{C}}^{(3)}-\beta_{3,\mathcal{C}}^{(3)}}
=\frac{3d}{d+2}(\triangle_\mathcal{C}\otimes 1)^\vee,
\end{equation}
\begin{equation}
\label{e33}
e_{3,\mathcal{C}}^{(3)}=\frac{\triangle_\mathcal{C}^{(3)}-\beta_{2,\mathcal{C}}^{(3)}\cdot 1^{\otimes 3}}
{\beta_{3,\mathcal{C}}^{(3)}-\beta_{2,\mathcal{C}}^{(3)}}
=1^{\otimes 3}-\frac{3d}{d+2}(\triangle_\mathcal{C}\otimes 1)^\vee.
\end{equation}
\end{subequations}

\noindent For each $m$ between $\left\lceil \frac{n}{2} \right\rceil$ and $n$, we denote
\begin{equation}
\label{Slambda}
\mathbb{S}_m^{(n)}\mathcal{C}_F=\Sym^n\mathcal{C}_F\cdot e_{m,\mathcal{C}}^{(n)}.
\end{equation}
We also denote
\begin{equation}
\label{Tnn}
T_n\mathcal{C}_F=\mathbb{S}_n^{(n)}\mathcal{C}_F=\Sym^n\mathcal{C}_F\cdot e_{n,\mathcal{C}}^{(n)}.
\end{equation}

\begin{proposition}
\label{decompositionProp}
Let $m$ be an integer between $\left\lceil \frac{n}{2} \right\rceil$ and $n$.
\begin{enumerate}[label={\rm(\alph*)},leftmargin=*]
\item
If $n\ge4$ and $m<n$, then
\begin{equation*}
\big(\triangle_\mathcal{C}\otimes1^{\otimes(n-2)}\big)
\big(\triangle_\mathcal{C}^{(n)}-\beta_{m,\mathcal{C}}^{(n)}\cdot1^{\otimes n}\big)
=\frac{(n-2)(n-3)}{n(n-1)}\cdot\triangle_\mathcal{C}\otimes
\big(\triangle_\mathcal{C}^{(n-2)}-\beta_{m-1,\mathcal{C}}^{(n-2)}\cdot1^{\otimes(n-2)}\big).
\end{equation*}

\item
The set $\big\{e^{(n)}_{k,\mathcal{C}}\mid k=\left\lceil \frac{n}{2} \right\rceil,\ldots,n\big\}$ 
is a complete set of orthogonal central idempotents of $\Sym^n\mathcal{C}_F$, i.e.
$e^{(n)}_{k,\mathcal{C}}$ commutes and associates with all elements of $\Sym^n\mathcal{C}_F$,
$e^{(n)}_{k,\mathcal{C}}e^{(n)}_{\ell,\mathcal{C}}=0$ if $k\ne\ell$, 
$\big(e^{(n)}_{k,\mathcal{C}}\big)^2=e^{(n)}_{k,\mathcal{C}}$ for integers $k,\ell$ between $\left\lceil \frac{n}{2} \right\rceil$ and $n$, and
$\sum_{k=\left\lceil \frac{n}{2} \right\rceil}^ne^{(n)}_{k,\mathcal{C}}=1^{\otimes n}$.

\item
If $m<n$, then
\begin{equation}
\label{emnEq}
e_{m,\mathcal{C}}^{(n)}= 
\frac{1}{\beta_{m,\mathcal{C}}^{(n)}}\cdot\big(\triangle_\mathcal{C}\otimes e_{m-1,\mathcal{C}}^{(n-2)}\big)^\vee.
\end{equation}

\item
$e_{n,\mathcal{C}}^{(n)}\triangle_\mathcal{C}^{(n)}=0$ 
and $e_{n,\mathcal{C}}^{(n)}\equiv1^{\otimes n}\ {\rm mod}\ F[\triangle_\mathcal{C}^{(n)}]\triangle_\mathcal{C}^{(n)}$.
In particular,
\begin{equation}
\label{decompositionSym}
\Sym^n\mathcal{C}_F=T_n\mathcal{C}_F\oplus\Sym^n\mathcal{C}_F\cdot\triangle_\mathcal{C}^{(n)}\,.
\end{equation}

\item
The algebra $\mathbb{S}_m^{(n)}\mathcal{C}_F$ is isomorphic as an $F$-algebra to the
algebra $T_{2m-n}\mathcal{C}_F$ which is a nonzero central simple $F$-algebra.

\item
$\Sym^n\mathcal{C}_F=\displaystyle{\bigoplus_{m=\left\lceil \frac{n}{2} \right\rceil}^n\mathbb{S}_m^{(n)}\mathcal{C}_F}$.
\end{enumerate}
\end{proposition}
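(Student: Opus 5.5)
Proposition~\ref{decompositionProp} largely collects results of \cite{Raz24} (Lemma 3.4, Prop.\ 3.6, 3.9, 3.12) restated over $F$. I would prove the parts in the order (a), (b), (c), (d), (e), (f), reducing (b)--(d) to (a) by induction on $n$ and to the spectral behaviour of the central element $\triangle_\mathcal{C}^{(n)}$.

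\textbf{Part (a).} Since $\triangle_\mathcal{C}\otimes1^{\otimes(n-2)}$ commutes and associates with $\Sym^n\mathcal{C}_F$ (Lemma~\ref{center}), it suffices to expand $(\triangle_\mathcal{C}\otimes1^{\otimes(n-2)})\triangle_\mathcal{C}^{(n)}$. I would write $\triangle_\mathcal{C}^{(n)}$ as the average over all $\binom n2$ positions $\{i,j\}$ of $\triangle_\mathcal{C}$ placed at those positions, and sort the pairs by how they meet $\{1,2\}$:
\begin{itemize}
\item the pair $\{1,2\}$ itself gives $\triangle_\mathcal{C}^2=\triangle_\mathcal{C}$ by \eqref{triangleSq};
\item the $(n-2)(n-3)/2$ pairs disjoint from $\{1,2\}$ give $\triangle_\mathcal{C}\otimes\triangle_\mathcal{C}^{(n-2)}$;
\item the $2(n-2)$ pairs sharing exactly one index give terms $(\triangle_\mathcal{C}\otimes1)(1\otimes\triangle_\mathcal{C})$ in three slots.
\end{itemize}
For the mixed terms one needs $(\triangle_\mathcal{C}\otimes1)(1\otimes\triangle_\mathcal{C})=\frac1d(\triangle_\mathcal{C}\otimes1)$ after symmetrization. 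This follows from $\sum_i e_i\,y\,e_i=(2-d)\bar y$-type identities in a composition algebra, or more directly from Lemma~\ref{triangleLemma}. Collecting coefficients, the result is a linear combination of $\triangle_\mathcal{C}\otimes1^{\otimes(n-2)}$ and $\triangle_\mathcal{C}\otimes\triangle_\mathcal{C}^{(n-2)}$. The definition \eqref{beta} is designed so that the $\beta$ shift $\beta_{m,\mathcal{C}}^{(n)}\mapsto\beta_{m-1,\mathcal{C}}^{(n-2)}$ absorbs the constant term, in the same way as the identity \eqref{betaIdentity}. This bookkeeping is the main computational obstacle, and I would check it against the cases $n=4$, $d=4$.

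\textbf{Parts (b) and (c).} I would show that the minimal polynomial of $\triangle_\mathcal{C}^{(n)}$ divides $\prod_{m}(X-\beta_{m,\mathcal{C}}^{(n)})$, by induction on $n$.
\begin{itemize}
\item For $n=2,3$ this follows from \eqref{triangleSq} and a direct computation.
\item For the inductive step, multiply $\triangle_\mathcal{C}^{(n)}\prod_{m<n}(\triangle_\mathcal{C}^{(n)}-\beta_{m,\mathcal{C}}^{(n)})$, symmetrized, and apply (a) repeatedly. This turns it into $(\triangle_\mathcal{C}\otimes\prod(\triangle_\mathcal{C}^{(n-2)}-\beta_{m-1,\mathcal{C}}^{(n-2)}))^\vee$, which vanishes by the induction hypothesis. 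Note that $\beta_{n,\mathcal{C}}^{(n)}=0$ accounts for the extra factor.
\end{itemize}
Given this, the $e_{m,\mathcal{C}}^{(n)}$ are the Lagrange interpolation idempotents in the commutative associative algebra $F[\triangle_\mathcal{C}^{(n)}]$, with distinct nodes. Hence they are orthogonal idempotents summing to $1$, and they are central by Lemma~\ref{center}. Formula \eqref{emnEq} comes out of the same application of (a), using $\triangle_\mathcal{C}^{(n)}e_{m,\mathcal{C}}^{(n)}=\beta_{m,\mathcal{C}}^{(n)}e_{m,\mathcal{C}}^{(n)}$ and \eqref{eq_sym_1c}.

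\textbf{Parts (d), (e), (f).} For (d), $e_{n,\mathcal{C}}^{(n)}\triangle_\mathcal{C}^{(n)}=\beta_{n,\mathcal{C}}^{(n)}e_{n,\mathcal{C}}^{(n)}=0$. Expanding \eqref{emn}, the constant term of $e_{n,\mathcal{C}}^{(n)}$ as a polynomial in $\triangle_\mathcal{C}^{(n)}$ is $1$. This gives the congruence, and the direct sum \eqref{decompositionSym} follows, since the other idempotents are multiples of $\triangle_\mathcal{C}^{(n)}$.

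Part (f) is immediate from (b).

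For (e), I would use (c) and Lemma~\ref{zeroCond} to define the map
\[
T_{2m-n}\mathcal{C}_F\to\mathbb{S}_m^{(n)}\mathcal{C}_F,\qquad s\mapsto(s\otimes\triangle_\mathcal{C}^{\otimes(n-m)})^\vee\cdot e_{m,\mathcal{C}}^{(n)}.
\]
Lemma~\ref{zeroCond} gives injectivity. Multiplicativity follows from Lemma~\ref{triangleLemma}, and surjectivity follows by iterating \eqref{emnEq}. Central simplicity of $T_k\mathcal{C}_F$ is the deeper input, which I would cite from \cite[Prop.\ 3.9, 3.12]{Raz24} and extend to $F=\mathbb{C}$ by scalar extension, since central simplicity is preserved under base change.
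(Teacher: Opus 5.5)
Apart from (c), your route is not the paper's. The paper proves only the scalar factor in (c) and imports (a), (b), (d), (e), (f) from \cite{Raz24}. For (c) you do what the paper does:
\begin{itemize}
\item since $\beta_{n,\mathcal{C}}^{(n)}=0$, the $m'=n$ factor of \eqref{emn} is $\triangle_\mathcal{C}^{(n)}/\beta_{m,\mathcal{C}}^{(n)}$;
\item write it as $(\triangle_\mathcal{C}\otimes1^{\otimes(n-2)})^\vee/\beta_{m,\mathcal{C}}^{(n)}$ and pull the symmetrization out by \eqref{eq_sym_1c};
\item distribute the idempotent $\triangle_\mathcal{C}\otimes1^{\otimes(n-2)}$ over the remaining factors (Lemma~\ref{center}) and apply (a);
\item convert the denominators with \eqref{betaIdentity}.
\end{itemize}
Make that last step explicit, and check $n=2,3$ directly, since (a) needs $n\ge4$. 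Your induction for (b) is the same computation applied to the full product $\prod_m(\triangle_\mathcal{C}^{(n)}-\beta_{m,\mathcal{C}}^{(n)}\cdot1^{\otimes n})$. Combined with Lagrange interpolation in $F[\triangle_\mathcal{C}^{(n)}]$, it gives (b), (d), (f) self-containedly and shows why \eqref{emn} produces orthogonal idempotents. The paper's citation is shorter but less transparent.

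Two points need repair.

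\textbf{The mixed terms in (a).} They must be handled exactly, not modulo symmetrization. Statement (a) is an identity in $\mathcal{C}_F^{\otimes n}$ whose instances are multiplied together before symmetrizing. Moreover, a single term $(\triangle_\mathcal{C}\otimes1^{\otimes(n-2)})\triangle_{\{1,j\}}$ is not a multiple of $\triangle_\mathcal{C}\otimes1^{\otimes(n-2)}$. The relevant identity is not one of the form $\sum_ie_iye_i=(2-d)\bar y$; it is the polarization of Lemma~\ref{triangleLemma}:
\[
\triangle_\mathcal{C}(x\otimes y+y\otimes x)=\big(N(x+y)-N(x)-N(y)\big)\triangle_\mathcal{C}.
\]
With $x=e_k$, $y=1$ this gives $\triangle_\mathcal{C}(e_k\otimes1+1\otimes e_k)=2\delta_{k0}\triangle_\mathcal{C}$. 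Hence the $\{1,j\}$ and $\{2,j\}$ terms together equal exactly $\frac{2}{d}\,\triangle_\mathcal{C}\otimes1^{\otimes(n-2)}$. Your bookkeeping then closes. The coefficient of $\triangle_\mathcal{C}\otimes1^{\otimes(n-2)}$ is $\frac{2(2n+d-4)}{dn(n-1)}$. Subtracting $\beta_{m,\mathcal{C}}^{(n)}$ leaves $-\frac{(n-2)(n-3)}{n(n-1)}\beta_{m-1,\mathcal{C}}^{(n-2)}$, because $(n-m)(2m+d-2)-(n-m-1)(2m+d-4)=2n+d-4$.

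\textbf{The map in (e).} Your map $\psi(s)=(s\otimes\triangle_\mathcal{C}^{\otimes(n-m)})^\vee e_{m,\mathcal{C}}^{(n)}$ is not multiplicative as written. Iterating \eqref{emnEq} as in \eqref{emnForm} gives $\psi(e_{2m-n,\mathcal{C}}^{(2m-n)})=B\,e_{m,\mathcal{C}}^{(n)}$, where $B=\beta_{m,\mathcal{C}}^{(n)}\beta_{m-1,\mathcal{C}}^{(n-2)}\cdots\beta_{2m-n+1,\mathcal{C}}^{(2m-n+2)}$. For example, $\psi(1)=\frac{d+2}{3d}e_{2,\mathcal{C}}^{(3)}$ when $n=3$, $m=2$. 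In general $\psi(s)\psi(t)=B\,\psi(st)$, so you must use $B^{-1}\psi$ instead.

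Multiplicativity and surjectivity are cleanest via the algebra homomorphism $r\mapsto r'$, $\Sym^n\mathcal{C}_F\to\Sym^{2m-n}\mathcal{C}_F$, determined by $x^{\otimes n}\mapsto N(x)^{n-m}x^{\otimes(2m-n)}$. Combine it with the identity $r(\triangle_\mathcal{C}^{\otimes(n-m)}\otimes u)^\vee=(\triangle_\mathcal{C}^{\otimes(n-m)}\otimes r'u)^\vee$ from the proof of Proposition~\ref{isom}. Central simplicity still has to be cited, as in the paper.
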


\begin{proof}
Claim (a) is \cite[Prop.\ 3.6 (a)]{Raz24}.
Claim (b) is \cite[Prop.\ 3.6 (c)]{Raz24} together with Lemma~\ref{center}.
Claim (d) is \cite[Prop.\ 3.6 (e)]{Raz24}.
Claim (e) is \cite[Prop.\ 3.9 (b) and Prop.~3.12(a)]{Raz24}.
Claim (f) is \cite[Prop.\ 3.6 (f)]{Raz24}.
Finally, Claim (c) is \cite[Prop.\ 3.6 (d)]{Raz24}, up to the calculation of the factor, which we prove now.

If $n=2$, then by \eqref{e21}, 
$e_{1,\mathcal{C}}^{(2)}=\triangle_\mathcal{C}
=\frac{1}{\beta_{1,\mathcal{C}}^{(2)}}\big(\triangle_\mathcal{C}\otimes e_{0,\mathcal{C}}^{(0)}\big)^\vee$.
If $n=3$, then by \eqref{e32},
$e_{2,\mathcal{C}}^{(3)}=\frac{3d}{d+2}\big(\triangle_\mathcal{C}\otimes1\big)^\vee
=\frac{1}{\beta_{2,\mathcal{C}}^{(3)}}\big(\triangle_\mathcal{C}\otimes e_{1,\mathcal{C}}^{(1)}\big)^\vee$.
Suppose that $n\ge4$. Then
\begin{align*}
e_{m,\mathcal{C}}^{(n)}&\stackrel{\eqref{emn}}{=}
\frac{1}{\beta_{m,\mathcal{C}}^{(n)}}\triangle_\mathcal{C}^{(n)}
\prod_{\substack{\left\lceil \frac{n}{2} \right\rceil\le m'\le n-1\\ m'\ne m}}
\frac{\triangle_\mathcal{C}^{(n)}-\beta_{m',\mathcal{C}}^{(n)}\cdot1^{\otimes n}}
{\beta_{m,\mathcal{C}}^{(n)}-\beta_{m',\mathcal{C}}^{(n)}}\\
&\stackrel{\eqref{trianglen},\eqref{eq_sym_1c}}{=}
\frac{1}{\beta_{m,\mathcal{C}}^{(n)}}\Big((\triangle_\mathcal{C}\otimes1^{\otimes(n-2)})
\prod_{\substack{\left\lceil \frac{n}{2} \right\rceil\le m'\le n-1\\ m'\ne m}}
\frac{\triangle_\mathcal{C}^{(n)}-\beta_{m',\mathcal{C}}^{(n)}\cdot1^{\otimes n}}
{\beta_{m,\mathcal{C}}^{(n)}-\beta_{m',\mathcal{C}}^{(n)}}
\Big)^\vee
\end{align*}
Thus, by Lemma~\ref{center} and \eqref{triangleSq},
\begin{align*}
e_{m,\mathcal{C}}^{(n)}&=
\frac{1}{\beta_{m,\mathcal{C}}^{(n)}}\Big(
\prod_{\substack{\left\lceil \frac{n}{2} \right\rceil\le m'\le n-1\\ m'\ne m}}
\big((\triangle_\mathcal{C}\otimes1^{\otimes(n-2)})
\frac{\triangle_\mathcal{C}^{(n)}-\beta_{m',\mathcal{C}}^{(n)}\cdot1^{\otimes n}}
{\beta_{m,\mathcal{C}}^{(n)}-\beta_{m',\mathcal{C}}^{(n)}}
\big)\Big)^\vee\\
&\stackrel{\text{(a)}}{=}
\frac{1}{\beta_{m,\mathcal{C}}^{(n)}}\Big(
\prod_{\substack{\left\lceil \frac{n}{2} \right\rceil\le m'\le n-1\\ m'\ne m}}
\big(\frac{(n-2)(n-3)}{n(n-1)}\cdot\triangle_\mathcal{C}\otimes
\frac{\triangle_\mathcal{C}^{(n-2)}-\beta_{m'-1,\mathcal{C}}^{(n-2)}\cdot1^{\otimes(n-2)}}
{\beta_{m,\mathcal{C}}^{(n)}-\beta_{m',\mathcal{C}}^{(n)}}\big)\Big)^\vee\\
&\stackrel{\eqref{betaIdentity}}{=}
\frac{1}{\beta_{m,\mathcal{C}}^{(n)}}\Big(\triangle_\mathcal{C}\otimes
\prod_{\substack{\left\lceil \frac{n-2}{2} \right\rceil\le m'-1\le n-2\\ m'-1\ne m-1}}
\frac{\triangle_\mathcal{C}^{(n-2)}-\beta_{m'-1,\mathcal{C}}^{(n-2)}\cdot1^{\otimes(n-2)}}
{\beta_{m-1,\mathcal{C}}^{(n-2)}-\beta_{m'-1,\mathcal{C}}^{(n-2)}}\Big)^\vee\\
&\stackrel{\eqref{emn}}{=}
\frac{1}{\beta_{m,\mathcal{C}}^{(n)}}\Big(\triangle_\mathcal{C}\otimes e_{m-1,\mathcal{C}}^{(n-2)}\Big)^\vee,
\end{align*}
as claimed.
\end{proof}

\begin{corollary}
\label{emnHO}
Let $m$ be an integer between $\left\lceil \frac{n}{2} \right\rceil$ and $n-1$. Then
\begin{equation}
\label{emnH}
e_{m,\mathbb{H}}^{(n)}
=
\frac{2m-n+1}{m+1}\binom{n}{m}\cdot
\big(\triangle_\mathbb{H}^{\otimes(n-m)}\otimes e_{2m-n,\mathbb{H}}^{(2m-n)}\big)^\vee,
\end{equation}
\begin{equation}
\label{emnO}
e_{m,\mathbb{O}}^{(n)}
=
2^{n-m}
\frac{(2m-n+1)(2m-n+2)(2m-n+3)}
{(m+1)(m+2)(m+3)}
\binom{n}{m}\cdot
\big(\triangle_\mathbb{O}^{\otimes(n-m)}\otimes e_{2m-n,\mathbb{O}}^{(2m-n)}\big)^\vee.
\end{equation}
\end{corollary}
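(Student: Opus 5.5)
We argue by induction on $j:=n-m$ for fixed $r:=2m-n\ge0$. The engine is Proposition~\ref{decompositionProp}(c), which gives
\[
e_{m,\mathcal{C}}^{(n)}=\frac{1}{\beta_{m,\mathcal{C}}^{(n)}}\big(\triangle_\mathcal{C}\otimes e_{m-1,\mathcal{C}}^{(n-2)}\big)^\vee .
\]
Passing from $(n,m)$ to $(n-2,m-1)$ keeps $2m-n=r$ fixed and lowers $n-m$ by one. Iterating $j=n-m$ times, and using \eqref{eq_sym_1b} at each stage to absorb the inner symmetrization into the outer one, we obtain
\[
e_{m,\mathcal{C}}^{(n)}=\Big(\prod_{t=0}^{j-1}\beta_{m-t,\mathcal{C}}^{(n-2t)}\Big)^{-1}\big(\triangle_\mathcal{C}^{\otimes j}\otimes e_{r,\mathcal{C}}^{(r)}\big)^\vee .
\]
The recursion stops when $n-2j=r$ and $m-j=r$, at the idempotent $e^{(r)}_{r,\mathcal{C}}$. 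This uses the conventions $e^{(0)}_{0,\mathcal{C}}=1_\mathbb{R}$ and $e^{(1)}_{1,\mathcal{C}}=1_\mathcal{C}$ in the cases $r=0,1$. For $n-2t\in\{2,3\}$ the relation is the explicit case checked in the proof of Proposition~\ref{decompositionProp}(c), so the induction is uniform. Each $\beta$ appearing is nonzero, because at every stage $m-t<n-2t$ (as $j-t\ge1$), so $\beta_{m-t,\mathcal{C}}^{(n-2t)}>0$ by \eqref{beta}.

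It remains to evaluate the scalar product. At step $t$ we have $n'=n-2t$, $m'=m-t$, hence $n'-m'=j-t$ and $2m'+d-2=2m-2t+d-2$. By \eqref{beta},
\[
\beta_{m',\mathcal{C}}^{(n')}=\frac{2(j-t)(2m-2t+d-2)}{d(n-2t)(n-2t-1)}.
\]
Reindex with $s=j-t$, running from $1$ to $j$. Then $m-t=r+s$ and $n-2t=r+2s$, so
\[
\prod_{t=0}^{j-1}\beta_{m-t,\mathcal{C}}^{(n-2t)}=\Big(\frac{2}{d}\Big)^j\prod_{s=1}^{j}\frac{s\,(2r+2s+d-2)}{(r+2s)(r+2s-1)}.
\]
The numerator contains $\prod_{s=1}^{j} s=j!$. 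The denominator $\prod_{s=1}^{j}(r+2s)(r+2s-1)$ is the product of all integers from $r+1$ to $r+2j=n$, which equals $n!/r!$.

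Now specialize $d$.

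For $d=4$ the remaining numerator factor is $\prod_{s=1}^{j}2(r+s+1)=2^j\,(m+1)!/(r+1)!$, since $r+j=m$. Together with $(2/4)^j=2^{-j}$, the product becomes
\[
\frac{j!\,(m+1)!}{(r+1)!}\cdot\frac{r!}{n!}=\frac{(m+1)\,j!\,m!}{(r+1)\,n!}.
\]
Its reciprocal is $\frac{r+1}{m+1}\binom{n}{m}$ with $r+1=2m-n+1$, which is exactly \eqref{emnH}.

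For $d=8$ the remaining numerator factor is $\prod_{s=1}^{j}2(r+s+3)=2^j\,(m+3)!/(r+3)!$. Together with $(2/8)^j=4^{-j}$, the product becomes
\[
2^{-j}\,\frac{j!\,(m+3)!\,r!}{(r+3)!\,n!}.
\]
Its reciprocal is
\[
2^{n-m}\,\frac{(r+1)(r+2)(r+3)}{(m+1)(m+2)(m+3)}\binom{n}{m},
\]
which is \eqref{emnO}.

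The only real obstacle is the bookkeeping in the unrolling. We must check that the relation in Proposition~\ref{decompositionProp}(c) applies at every intermediate level: this needs $m-t<n-2t$, which holds, and it covers the small cases $n-2t\in\{2,3\}$. We must also check that nested symmetrizations collapse correctly via \eqref{eq_sym_1b}. After that, the product telescoping into factorials is routine.
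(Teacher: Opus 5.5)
Your proposal is correct and is the paper's proof: iterate Proposition~\ref{decompositionProp}(c) $n-m$ times, collapse the nested symmetrizations via \eqref{eq_sym_1b}, and evaluate the product $\beta_{m,\mathcal{C}}^{(n)}\beta_{m-1,\mathcal{C}}^{(n-2)}\cdots\beta_{2m-n+1,\mathcal{C}}^{(2m-n+2)}$ from \eqref{beta}. The only cosmetic difference is that the paper first writes the reciprocal of this product for general $d$, as $\left(\frac{d}{4}\right)^{n-m}\frac{m!}{(m+\frac{d}{2}-1)!}\frac{(2m-n+\frac{d}{2}-1)!}{(2m-n)!}\binom{n}{m}$, and then specializes, whereas you specialize to $d=4$ and $d=8$ directly; your factorial bookkeeping is correct in both cases.
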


\begin{proof}
Note that $m-k=n-2k$ if and only if $k=n-m$. For this $k$, we have that $m-k=n-2k=2m-n$.
Therefore, by \eqref{emnEq}
\begin{equation}
\label{emnForm}
\begin{aligned}
e_{m,\mathcal{C}}^{(n)}&=
\frac{1}{\beta_{m,\mathcal{C}}^{(n)}}\cdot
\big(\triangle_\mathcal{C}\otimes e_{m-1,\mathcal{C}}^{(n-2)}\big)^\vee
=
\frac{1}{\beta_{m,\mathcal{C}}^{(n)}\beta_{m-1,\mathcal{C}}^{(n-2)}}\cdot
\big(\triangle_\mathcal{C}^{\otimes 2}\otimes e_{m-2,\mathcal{C}}^{(n-4)}\big)^\vee\\
&=\ldots=
\frac{1}{\beta_{m,\mathcal{C}}^{(n)}\beta_{m-1,\mathcal{C}}^{(n-2)}\cdots\beta_{2m-n+1,\mathcal{C}}^{(2m-n+2)}}\cdot
\big(\triangle_\mathcal{C}^{\otimes(n-m)}\otimes e_{2m-n,\mathcal{C}}^{(2m-n)}\big)^\vee.
\end{aligned}
\end{equation}
By \eqref{beta},
\begin{equation}
\label{betaForm}
\begin{aligned}
&\frac{1}{\beta_{m,\mathcal{C}}^{(n)}\beta_{m-1,\mathcal{C}}^{(n-2)}\cdots\beta_{2m-n+1,\mathcal{C}}^{(2m-n+2)}}
=\left(\frac{d}{4}\right)^{n-m}\cdot\\
&\frac{n(n-1)(n-2)(n-3)\cdots(2m-n+2)(2m-n+1)}
{(n-m)(n-m-1)\cdots1\cdot\left(m+\frac{d}{2}-1\right)\left(m+\frac{d}{2}-2\right)\cdots\left(2m-n+\frac{d}{2}\right)}\\
&=\left(\frac{d}{4}\right)^{n-m}
\frac{m!}{\left(m+\frac{d}{2}-1\right)!}
\frac{\left(2m-n+\frac{d}{2}-1\right)!}{(2m-n)!}
\binom{n}{m}.
\end{aligned}
\end{equation}
Putting \eqref{betaForm} in \eqref{emnForm} gives \eqref{emnH} (resp.\ \eqref{emnO}) for $d=4$ (resp.\ $d=8$).
\end{proof}

\begin{proposition}[{\cite[Thm.\ 5.2]{Raz24}}]
\label{matrixProp}
Let $n$ be a positive integer. 
\begin{enumerate}[label={\rm(\alph*)},leftmargin=*]
\item
If $n$ is even, then $T_n\mathbb{H}$ is isomorphic to the algebra $M_{n+1}(\mathbb{R})$
of $n+1\times n+1$ matrices with entries in $\mathbb{R}$.

\item
If $n$ is odd, then $T_n\mathbb{H}$ is isomorphic to the algebra
$M_{\frac{n+1}{2}}(\mathbb{H})$
of $\frac{n+1}{2}\times\frac{n+1}{2}$ matrices with entries in $\mathbb{H}$.
\end{enumerate}
In particular, $T_n\mathbb{H}\otimes_\mathbb{R}\mathbb{C}\cong M_{n+1}(\mathbb{C})$.
\end{proposition}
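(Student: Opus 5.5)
The approach is to complexify, identify $T_n\mathbb{H}\otimes_\mathbb{R}\mathbb{C}$ with the full endomorphism algebra of the $(n+1)$-dimensional irreducible representation $\Sym^n\mathbb{C}^2$ of $\mathbb{H}_\mathbb{C}\cong M_2(\mathbb{C})$, and then recover the real form from an antilinear structure map.

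First, by Proposition~\ref{decompositionProp}(e) (with $m=n$), $T_n\mathbb{H}$ is a nonzero central simple $\mathbb{R}$-algebra. By Frobenius, it is therefore isomorphic to $M_r(\mathbb{R})$ or to $M_s(\mathbb{H})$. Its dimension follows from \eqref{decompositionSym}:
\[
\dim T_n\mathbb{H}=\dim\Sym^n\mathbb{H}-\dim\big(\Sym^n\mathbb{H}\cdot\triangle_\mathbb{H}^{(n)}\big).
\]
I would compute the second term via Proposition~\ref{decompositionProp}(f), whose summands with $m<n$ are isomorphic to $T_{2m-n}\mathbb{H}$, and use induction on $n$. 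The induction hypothesis is that $\dim T_j\mathbb{H}=(j+1)^2$ for $j<n$, with $T_0\mathbb{H}=\mathbb{R}$ and $T_1\mathbb{H}=\mathbb{H}$. This gives
\[
\dim T_n\mathbb{H}=\binom{n+3}{3}-\sum_{m<n}(2m-n+1)^2=(n+1)^2 .
\]
Hence $T_n\mathbb{H}\cong M_{n+1}(\mathbb{R})$ or $T_n\mathbb{H}\cong M_{\frac{n+1}{2}}(\mathbb{H})$ (the latter only if $n$ is odd), and in either case $T_n\mathbb{H}\otimes_\mathbb{R}\mathbb{C}\cong M_{n+1}(\mathbb{C})$. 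This already gives the last assertion.

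To decide which real form occurs, I would make the isomorphism explicit.
\begin{enumerate}[label={\rm(\roman*)},leftmargin=*]
\item Fix $\mathbb{H}_\mathbb{C}\cong M_2(\mathbb{C})=\mathrm{End}(V)$ with $V=\mathbb{C}^2$. Then $\mathbb{H}_\mathbb{C}^{\otimes n}=\mathrm{End}(V^{\otimes n})$, and $\Sym^n\mathbb{H}_\mathbb{C}$ preserves the subspace $\Sym^nV$, which has dimension $n+1$. This gives a homomorphism $\rho\colon\Sym^n\mathbb{H}_\mathbb{C}\to\mathrm{End}(\Sym^nV)$.
\item By Lemma~\ref{span}, the image of $\rho$ is spanned by the operators $g^{\otimes n}$ with $g\in GL_2(\mathbb{C})$. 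Since $\Sym^nV$ is an irreducible $GL_2$-module, Burnside's theorem shows that $\rho$ is surjective.
\item The element $\triangle_\mathbb{H}$ corresponds, up to scalar, to the projection of $V\otimes V$ onto $\Lambda^2V$. Its image under $\rho$ therefore kills $\Sym^nV$, so $\rho$ vanishes on $\Sym^n\mathbb{H}_\mathbb{C}\cdot\triangle_\mathbb{H}^{(n)}$.
\item By the dimension count above, $\rho$ then induces an isomorphism $T_n\mathbb{H}_\mathbb{C}\cong M_{n+1}(\mathbb{C})$.
\end{enumerate}

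For the real structure, note that $\mathbb{H}\subset\mathbb{H}_\mathbb{C}$ is the fixed algebra of $x\mapsto JxJ^{-1}$, where $J$ is the standard antilinear map on $V=\mathbb{C}^2$ (right multiplication by $e_2$ on $\mathbb{H}=\mathbb{C}\oplus\mathbb{C}e_2$), and $J^2=-1$. Then $J^{\otimes n}$ preserves $\Sym^nV$ and satisfies $(J^{\otimes n})^2=(-1)^n$. Moreover, $\rho$ carries $T_n\mathbb{H}$ into the real algebra of operators commuting with $J^{\otimes n}$. That real algebra has real dimension $(n+1)^2$, so by the dimension count $\rho$ carries $T_n\mathbb{H}$ isomorphically onto it. This commutant is the standard real form:
\begin{itemize}
\item if $n$ is even, $J^{\otimes n}$ is a real structure and the commutant is $M_{n+1}(\mathbb{R})$;
\item if $n$ is odd, $J^{\otimes n}$ is a quaternionic structure, making $\Sym^nV$ a right $\mathbb{H}$-module of rank $\frac{n+1}{2}$, and the commutant is $M_{\frac{n+1}{2}}(\mathbb{H})$.
\end{itemize}

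I expect the main obstacle to be step (iii) together with the compatibility of $\rho$ with $J$. One must check that the kernel of $\rho$ is exactly $\Sym^n\mathbb{H}_\mathbb{C}\cdot\triangle_\mathbb{H}^{(n)}$, which follows from surjectivity plus the dimension count. One must also check that the real points of $\Sym^n\mathbb{H}_\mathbb{C}$ map into the $J^{\otimes n}$-commutant. I would verify the latter on the spanning elements $x^{\otimes n}$ with $x\in\mathbb{H}$, using Lemma~\ref{span}.
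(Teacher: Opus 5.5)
\textbf{Verdict: your argument is correct, and it takes a genuinely different route from the paper.} The paper gives no proof of this proposition; it imports it from \cite{Raz24}. The Introduction only says that the starting point there is Wedderburn--Artin applied to the central simple algebra $T_n\mathbb{H}$.

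\textbf{The dimension count.} This step is the natural skeleton of any such proof, and it already does more than you say. Proposition~\ref{decompositionProp}(e),(f) together with $\sum_m(2m-n+1)^2=\binom{n+3}{3}$ give $\dim_\mathbb{R}T_n\mathbb{H}=(n+1)^2$. That settles the even case outright, since $4s^2=(n+1)^2$ has no integer solution when $n+1$ is odd. So the only real content is excluding $M_{n+1}(\mathbb{R})$ for odd $n$.

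\textbf{What the representation-theoretic step buys.} Identifying $T_n\mathbb{H}_\mathbb{C}\cong\mathrm{End}_\mathbb{C}(\Sym^n\mathbb{C}^2)$ and reading off the real form from $(J^{\otimes n})^2=(-1)^n$ explains the parity conceptually. It is the Frobenius--Schur indicator of $\Sym^n\mathbb{C}^2$ as a representation of the unit quaternions $SU(2)$. It also yields an explicit isomorphism rather than an abstract one. It even makes Theorem~\ref{Theorem1}(a) for $\ell=n$ transparent: $a$ and $a^c$ are the rank-one projections onto the eigenlines of $e_1$ in $\mathbb{C}^2$. Hence $\rho$ sends $\binom{n}{k}\big(a^{\otimes k}\otimes(a^c)^{\otimes(n-k)}\big)^\vee$ to the projection of $\Sym^n\mathbb{C}^2$ onto a weight line.

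\textbf{The induction is dispensable.} Once $\rho$ is surjective and kills $\Sym^n\mathbb{H}_\mathbb{C}\cdot\triangle_\mathbb{H}^{(n)}$, it restricts to a nonzero surjection onto $M_{n+1}(\mathbb{C})$ from the simple algebra $T_n\mathbb{H}_\mathbb{C}$. That restriction is therefore an isomorphism, and $\dim_\mathbb{R}T_n\mathbb{H}=(n+1)^2$ follows.

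\textbf{Two small repairs.}

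First, the parenthetical description of $J$ has the sides mixed. If $\mathbb{R}[e_1]$ acts on $\mathbb{H}=\mathbb{C}\oplus\mathbb{C}e_2$ from the left, then right multiplication by $e_2$ is complex linear, not antilinear. Instead, give $V=\mathbb{H}$ the complex structure of right multiplication by $e_1$, and let $\mathbb{H}$ act by left multiplication. Then $J=$ right multiplication by $e_2$ is antilinear, $J^2=-1$, and $J$ commutes with $\mathbb{H}$. Moreover, $X\mapsto JXJ^{-1}$ is exactly the complex conjugation of $\mathbb{H}_\mathbb{C}$, which is what you need for real elements to commute with $J^{\otimes n}$.

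Second, step (iii), which you flag as the main obstacle, follows from the paper's own identities:
\begin{enumerate}[label={\rm(\arabic*)},leftmargin=*]
\item By Lemma~\ref{triangleLemma}, $\triangle_\mathbb{H}$ commutes with every $g\otimes g$. By Schur's lemma it is therefore $\alpha P_{\Sym^2V}+\beta P_{\Lambda^2V}$.
\item By \eqref{aasH}, $(a\otimes a)\triangle_\mathbb{H}=0$. But $a\otimes a$ fixes $w\otimes w$ for $0\ne w\in aV$, so $\alpha=0$.
\item Finally, $\triangle_\mathbb{H}^2=\triangle_\mathbb{H}\ne0$ gives $\beta=1$.
\end{enumerate}
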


\begin{proposition}
\label{trianglel}
Let $\ell$ be an integer between $0$ and $\left\lfloor \frac{n}{2} \right\rfloor$. Then,
\begin{equation*}
\big(1^{\otimes(n-2\ell)}\otimes\triangle_\mathcal{C}^{\otimes\ell}\big)^\vee
\in\sum_{m=\left\lceil \frac{n}{2} \right\rceil}^{n-\ell}\mathbb{Q}^\times\cdot e_{m,\mathcal{C}}^{(n)}.
\end{equation*}
\end{proposition}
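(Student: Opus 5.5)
We prove the statement by induction on $\ell$, using the recursion relating $\triangle_\mathcal{C}\otimes1^{\otimes(n-2)}$ to the central element $\triangle_\mathcal{C}^{(n)}$, and the spectral decomposition $\triangle_\mathcal{C}^{(n)}=\sum_m\beta_{m,\mathcal{C}}^{(n)}e_{m,\mathcal{C}}^{(n)}$. The latter follows from \eqref{emn} together with Proposition~\ref{decompositionProp}(b): the $e_{m,\mathcal{C}}^{(n)}$ are the Lagrange interpolation idempotents of the commutative algebra $F[\triangle_\mathcal{C}^{(n)}]$.

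For $\ell=0$ the element is $1^{\otimes n}=\sum_{m}e^{(n)}_{m,\mathcal{C}}$ by Proposition~\ref{decompositionProp}(b), and all coefficients equal $1\in\mathbb{Q}^\times$. For $\ell\ge1$, the cleanest route is to multiply out $(\triangle_\mathcal{C}^{(n)})^\ell$ rather than to induct on $n$ directly. However, $(\triangle_\mathcal{C}^{(n)})^\ell$ mixes in terms in which two copies of $\triangle_\mathcal{C}$ overlap in a tensor slot. So instead I would use Corollary~\ref{emnHO}-style reasoning in reverse.

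\emph{Key step.} Write $t_\ell^{(n)}:=\big(1^{\otimes(n-2\ell)}\otimes\triangle_\mathcal{C}^{\otimes\ell}\big)^\vee$. Since $\triangle_\mathcal{C}\otimes1^{\otimes(n-2)}$ is central and associates (Lemma~\ref{center}), and $e_{m,\mathcal{C}}^{(n)}$ is a polynomial in $\triangle^{(n)}_\mathcal{C}$, we have
\[
t_\ell^{(n)}e_{m,\mathcal{C}}^{(n)}=\big(\triangle_\mathcal{C}\otimes t_{\ell-1}^{(n-2)}\big)^\vee e_{m,\mathcal{C}}^{(n)}.
\]
By \eqref{eq_sym_1b} this equals $\big((\triangle_\mathcal{C}\otimes1^{\otimes(n-2)})\,(1\otimes1\otimes t_{\ell-1}^{(n-2)})\,e_{m,\mathcal{C}}^{(n)}\big)^\vee$.

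Next I would use Proposition~\ref{decompositionProp}(a), iterated inside the product formula \eqref{emn}, exactly as in the proof of (c). This gives
\[
(\triangle_\mathcal{C}\otimes1^{\otimes(n-2)})e_{m,\mathcal{C}}^{(n)}=c\,\triangle_\mathcal{C}\otimes e_{m-1,\mathcal{C}}^{(n-2)}
\]
for $m<n$, with an explicit positive rational $c$. One can read off $c=\beta^{(n)}_{m,\mathcal{C}}$ by comparing with \eqref{emnEq}, since $(\triangle\otimes1)e=\triangle^{(n)}e$ after symmetrization. For $m=n$ the product vanishes, because $(\triangle\otimes1)e_n^{(n)}$ symmetrizes to $\triangle^{(n)}e_n^{(n)}=0$ by (d). I expect this unsymmetrized identity (before taking $\vee$) to be the main obstacle: one must check that the product over $m'$ in \eqref{emn}, including the factor $m'=n$, collapses correctly. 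The factor with $\beta_{n}^{(n)}=0$ contributes $\triangle^{(n)}_\mathcal{C}$, and $(\triangle\otimes1)\triangle^{(n)}$ must be handled by (a) with $m=n$ as well, or by a separate computation.

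Granting this, we get
\[
t_\ell^{(n)}e_{m,\mathcal{C}}^{(n)}=c_m\big(\triangle_\mathcal{C}\otimes t_{\ell-1}^{(n-2)}e_{m-1,\mathcal{C}}^{(n-2)}\big)^\vee .
\]
By induction, $t_{\ell-1}^{(n-2)}e^{(n-2)}_{m-1,\mathcal{C}}=q\,e^{(n-2)}_{m-1,\mathcal{C}}$ with $q\in\mathbb{Q}^\times$ for $m-1\le n-2-(\ell-1)$, i.e.\ $m\le n-\ell$, and $q=0$ otherwise. Then \eqref{emnEq} turns this back into $c_mq\beta^{(n)}_{m,\mathcal{C}}e^{(n)}_{m,\mathcal{C}}$, which is a nonzero rational multiple because each $\beta^{(n)}_{m,\mathcal{C}}$ with $m<n$ is a positive rational. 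Summing over $m$ with $\sum_m e^{(n)}_{m,\mathcal{C}}=1^{\otimes n}$ gives the claim. The range $m\le n-\ell$ also guarantees $\lceil\frac{n-2}{2}\rceil\le m-1$, so the inductive hypothesis applies.
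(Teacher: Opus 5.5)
\textbf{Genuine gap.} The gap is the step you flag and then grant: the unsymmetrized identity $(\triangle_\mathcal{C}\otimes1^{\otimes(n-2)})e^{(n)}_{m,\mathcal{C}}=c\,\triangle_\mathcal{C}\otimes e^{(n-2)}_{m-1,\mathcal{C}}$ for $m<n$, and $=0$ for $m=n$. This identity carries the whole induction, and neither justification you offer works as stated.

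\begin{itemize}
\item The proof of Proposition~\ref{decompositionProp}(c) only shows that $\triangle_\mathcal{C}\otimes1^{\otimes(n-2)}$ times the product in \eqref{emn} \emph{without} the factor $m'=n$ equals $\triangle_\mathcal{C}\otimes e^{(n-2)}_{m-1,\mathcal{C}}$. The remaining factor $\triangle^{(n)}_\mathcal{C}/\beta^{(n)}_{m,\mathcal{C}}$ is absorbed there by symmetrizing, which is exactly what you may not do here. Part (a) is not available for $m=n$, and you do not supply the separate computation.
\item For $m=n$, knowing that $\big((\triangle_\mathcal{C}\otimes1^{\otimes(n-2)})e^{(n)}_{n,\mathcal{C}}\big)^\vee=\triangle^{(n)}_\mathcal{C}e^{(n)}_{n,\mathcal{C}}=0$ proves nothing by itself, since $x\mapsto x^\vee$ has a large kernel on $\mathcal{C}^{\otimes n}$.
\item Your constant is wrong. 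Symmetrizing both sides and using \eqref{emnEq} gives $\beta^{(n)}_{m,\mathcal{C}}e^{(n)}_{m,\mathcal{C}}=c\,\beta^{(n)}_{m,\mathcal{C}}e^{(n)}_{m,\mathcal{C}}$, so $c=1$, not $\beta^{(n)}_{m,\mathcal{C}}$. For example, $(\triangle_\mathcal{C}\otimes1)e^{(3)}_{2,\mathcal{C}}=\triangle_\mathcal{C}\otimes1$. This slip is harmless for nonvanishing.
\item For $\mathcal{C}=\mathbb{O}$ you regroup a triple product whose middle factor $1\otimes1\otimes t^{(n-2)}_{\ell-1}$ is not in $\Sym^n\mathcal{C}$. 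Lemma~\ref{center} does not cover that.
\end{itemize}

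\textbf{The fix.} All of this is repaired by the contraction device the paper uses in the proof of Proposition~\ref{isom}.
\begin{itemize}
\item Write $s\in\Sym^n\mathcal{C}_F$ as $\sum_i\alpha_ix_i^{\otimes n}$ (Lemma~\ref{span}) and put $s':=\sum_i\alpha_iN(x_i)x_i^{\otimes(n-2)}\in\Sym^{n-2}\mathcal{C}_F$.
\item By Lemma~\ref{triangleLemma}, $(\triangle_\mathcal{C}\otimes u)s=\triangle_\mathcal{C}\otimes(us')$ for every $u\in\mathcal{C}_F^{\otimes(n-2)}$. Only slotwise products of two factors occur, so no associativity is needed, and $s'$ is determined by $s$ because $\triangle_\mathcal{C}\ne0$.
\item Taking $u=1^{\otimes(n-2)}$ and symmetrizing gives $(\triangle_\mathcal{C}\otimes s')^\vee=\triangle^{(n)}_\mathcal{C}s$.
\item For $s=e^{(n)}_{m,\mathcal{C}}$ the right side is $\beta^{(n)}_{m,\mathcal{C}}e^{(n)}_{m,\mathcal{C}}$. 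This equals $(\triangle_\mathcal{C}\otimes e^{(n-2)}_{m-1,\mathcal{C}})^\vee$ by \eqref{emnEq} if $m<n$, and $0$ by Proposition~\ref{decompositionProp}(d) if $m=n$.
\item Lemma~\ref{zeroCond} with $\ell=1$ then forces $s'=e^{(n-2)}_{m-1,\mathcal{C}}$, resp.\ $s'=0$.
\end{itemize}
With $u=t^{(n-2)}_{\ell-1}$ this yields $t^{(n)}_\ell e^{(n)}_{m,\mathcal{C}}=(\triangle_\mathcal{C}\otimes t^{(n-2)}_{\ell-1}e^{(n-2)}_{m-1,\mathcal{C}})^\vee$ for $m<n$, and $0$ for $m=n$. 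The rest of your induction then goes through. Note that $m-1\ge\lceil\frac{n-2}{2}\rceil$ comes from $m\ge\lceil\frac n2\rceil$, while $m\le n-\ell$ is what gives $m<n$.

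\textbf{Comparison with the paper.} Once repaired, your route differs from the paper's, which simply invokes \cite[Cor.~1.8]{Raz25} over $\mathbb{Q}$. Yours is self-contained within this paper. It also produces explicit coefficients: $t^{(n)}_\ell=\sum_{m=\lceil n/2\rceil}^{n-\ell}\big(\prod_{j=0}^{\ell-1}\beta^{(n-2j)}_{m-j,\mathcal{C}}\big)e^{(n)}_{m,\mathcal{C}}$, and all of these coefficients are positive rationals.
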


\begin{proof}
This is \cite[Cor.\ 1.8]{Raz25}, applied to
the quaternion algebra $\mathbb{Q}[e_1,e_2,e_3]$ or to the octonion algebra
$\mathbb{Q}[e_1,e_2,e_3,e_4,e_5,e_6,e_7]$ over the field of rational numbers $\mathbb{Q}$.
\end{proof}

Let $m$ be an integer between $\left\lceil \frac{n}{2} \right\rceil$ and $n$.
By Proposition~\ref{decompositionProp}(b), for $\mathcal{C}=\mathbb{O}$, $e_{m,\mathbb{O}}^{(n)}$ is an idempotent that
commutes and associates with all elements of $\Sym^n\mathbb{O}$.
In particular, $\Sym^n\mathbb{H}\cdot e_{m,\mathbb{O}}^{(n)}$ is an associative algebra 
with unit $e_{m,\mathbb{O}}^{(n)}$.
By Proposition~\ref{decompositionProp}(f), for $\mathcal{C}=\mathbb{H}$,
\begin{equation}
\label{decompositionH}
\Sym^n\mathbb{H}=\bigoplus_{\ell=\left\lceil \frac{n}{2} \right\rceil}^m\mathbb{S}_\ell^{(n)}\mathbb{H}\oplus
\bigoplus_{\ell=m+1}^n\mathbb{S}_\ell^{(n)}\mathbb{H}.
\end{equation}
By Proposition~\ref{decompositionProp}(b), for $\mathcal{C}=\mathbb{H}$, 
$\sum_{\ell=\left\lceil \frac{n}{2} \right\rceil}^m e_{\ell,\mathbb{H}}^{(n)}$ is an idempotent,
which we identify as the unit of the algebra $\bigoplus_{\ell=\left\lceil \frac{n}{2} \right\rceil}^m\mathbb{S}_\ell^{(n)}\mathbb{H}$.

\begin{proposition}
\label{isom}
Let $m$ be an integer between $\left\lceil \frac{n}{2} \right\rceil$ and $n$. Then, the kernel of the epimorphism
$\varphi\colon\Sym^n\mathbb{H}\rightarrow\Sym^n\mathbb{H}\cdot e_{m,\mathbb{O}}^{(n)}$ 
given by $r\mapsto r\cdot e_{m,\mathbb{O}}^{(n)}$,
for $r\in\Sym^n\mathbb{H}$, is $\bigoplus_{\ell=m+1}^n\mathbb{S}_\ell^{(n)}\mathbb{H}$.
Thus, the restriction of $\varphi$ to $\bigoplus_{\ell=\left\lceil \frac{n}{2} \right\rceil}^m\mathbb{S}_\ell^{(n)}\mathbb{H}$
is an isomorphism
\begin{equation}
\label{isomEq}
\bigoplus_{\ell=\left\lceil \frac{n}{2} \right\rceil}^m\mathbb{S}_\ell^{(n)}\mathbb{H}\cong\Sym^n\mathbb{H}\cdot e_{m,\mathbb{O}}^{(n)}
\end{equation}
of $\mathbb{R}$-algebras.
\end{proposition}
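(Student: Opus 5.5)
The idea is to use that each $\mathbb{S}_\ell^{(n)}\mathbb{H}$ is simple, so the kernel of $\varphi$ must be a sum of whole components. First I note that $\varphi$ is a homomorphism of $\mathbb{R}$-algebras: by Proposition~\ref{decompositionProp}(b) for $\mathcal{C}=\mathbb{O}$, $e_{m,\mathbb{O}}^{(n)}$ is an idempotent that commutes and associates with everything in $\Sym^n\mathbb{O}\supseteq\Sym^n\mathbb{H}$. Hence $(rs)e=(re)(se)$ for $e=e_{m,\mathbb{O}}^{(n)}$, and $\Ker\varphi$ is a two-sided ideal of $\Sym^n\mathbb{H}$.

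By Proposition~\ref{decompositionProp}(e),(f) for $\mathcal{C}=\mathbb{H}$, $\Sym^n\mathbb{H}$ is a direct sum of the simple ideals $\mathbb{S}_\ell^{(n)}\mathbb{H}$. Any two-sided ideal is therefore the direct sum of those $\mathbb{S}_\ell^{(n)}\mathbb{H}$ it contains. Since $\mathbb{S}_\ell^{(n)}\mathbb{H}$ has unit $e_{\ell,\mathbb{H}}^{(n)}$, it lies in $\Ker\varphi$ exactly when $e_{\ell,\mathbb{H}}^{(n)}e_{m,\mathbb{O}}^{(n)}=0$. So the proposition reduces to two claims:
\[
e_{\ell,\mathbb{H}}^{(n)}e_{m,\mathbb{O}}^{(n)}=0 \text{ for } \ell>m,\qquad e_{\ell,\mathbb{H}}^{(n)}e_{m,\mathbb{O}}^{(n)}\neq0 \text{ for } \ell\le m.
\]
Granting them, the isomorphism \eqref{isomEq} follows at once from \eqref{decompositionH}.

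For the vanishing, I set $j=n-m$ and use \eqref{emnO} together with $\triangle_\mathbb{O}=\triangle_\mathbb{O}\triangle_\mathbb{H}$ from \eqref{triangleHO}. This lets me write each summand $\sigma(\triangle_\mathbb{O}^{\otimes j}\otimes e_{2m-n,\mathbb{O}}^{(2m-n)})$ of $e_{m,\mathbb{O}}^{(n)}$ as $\sigma(\triangle_\mathbb{H}^{\otimes j}\otimes1^{\otimes(n-2j)})$ times a tensor. It then suffices to show
\[
e_{\ell,\mathbb{H}}^{(n)}\,\sigma\big(\triangle_\mathbb{H}^{\otimes j}\otimes1^{\otimes(n-2j)}\big)=0 \quad\text{for } \ell>n-j.
\]
By $\mathfrak S_n$-equivariance ($\sigma(ez)=e\,\sigma(z)$ for symmetric $e$), it is enough to treat $\sigma=1$. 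Here I would prove the identity
\[
e_{\ell,\mathbb{H}}^{(n)}(\triangle_\mathbb{H}\otimes1^{\otimes(n-2)})=\triangle_\mathbb{H}\otimes e_{\ell-1,\mathbb{H}}^{(n-2)}
\]
(with the convention that the right side is $0$ if $\ell=n$, by Proposition~\ref{decompositionProp}(d)). I would derive it from Proposition~\ref{decompositionProp}(a) applied factor by factor to the product \eqref{emn}, exactly as in the computation proving (c), using \eqref{betaIdentity} to match the denominators. Iterating $j$ times gives $\triangle_\mathbb{H}^{\otimes j}\otimes e_{\ell-j,\mathbb{H}}^{(n-2j)}$. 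This is zero because $\ell-j>n-2j$ means the index $\ell-j$ is out of range, i.e.\ the first factor with index $n'=n-2j'$ reached with $\ell-j'=n'$ kills it by (d). I expect the main obstacle to be the care with nonassociativity when multiplying elements of $\Sym^n\mathbb{H}$ by non-symmetric octonionic tensors. The saving point is that the relevant factors are $\triangle$'s in the first $2j$ slots, where Lemma~\ref{triangleLemma} and Lemma~\ref{center} give the needed commutation and association.

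For nonvanishing when $\ell\le m$, I reduce to the top component. Write $e_{\ell,\mathbb{H}}^{(n)}$ via \eqref{emnH}, multiply by $e_{m,\mathbb{O}}^{(n)}$ written via \eqref{emnO}, and use $\triangle_\mathbb{H}\triangle_\mathbb{O}=\triangle_\mathbb{O}$ and Lemma~\ref{triangleLemma} to absorb $n-m$ copies of $\triangle_\mathbb{H}$ into $\triangle_\mathbb{O}$. This should exhibit the product as a nonzero multiple of
\[
\big(\triangle_\mathbb{O}^{\otimes(n-m)}\otimes(x\cdot e_{2m-n,\mathbb{O}}^{(2m-n)})\big)^\vee,
\]
with $x=(\triangle_\mathbb{H}^{\otimes(m-\ell)}\otimes e_{2\ell-n,\mathbb{H}}^{(2\ell-n)})^\vee\in\Sym^{2m-n}\mathbb{H}$ a nonzero multiple of $e_{\ell-(n-m),\mathbb{H}}^{(2m-n)}$, by \eqref{emnH} again. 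By Lemma~\ref{zeroCond} this vanishes only if $x\,e_{2m-n,\mathbb{O}}^{(2m-n)}=0$. By Proposition~\ref{decompositionProp}(d), $e_{2m-n,\mathbb{O}}^{(2m-n)}\equiv1$ modulo $\Sym^{2m-n}\mathbb{O}\cdot\triangle_\mathbb{O}^{(2m-n)}$. So $x\,e_{2m-n,\mathbb{O}}^{(2m-n)}=0$ would put the nonzero element $x\in\Sym^{2m-n}\mathbb{H}$ in $\Sym^{2m-n}\mathbb{O}\cdot\triangle_\mathbb{O}^{(2m-n)}$, contradicting Lemma~\ref{zeroIntersection}(b).
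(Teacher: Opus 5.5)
Your proposal is correct in outline, but it takes a different route from the paper.

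\textbf{How the paper argues.} The paper never uses simplicity of the components. For an arbitrary $r=\sum_i\alpha_i x_i^{\otimes n}\in\Sym^n\mathbb{H}$ it uses Lemma~\ref{triangleLemma} to write $r\,e^{(n)}_{m,\mathbb{O}}$ as a nonzero multiple of $\big(\triangle_\mathbb{O}^{\otimes(n-m)}\otimes r'e^{(2m-n)}_{2m-n,\mathbb{O}}\big)^\vee$, where $r'=\sum_i\alpha_iN(x_i)^{n-m}x_i^{\otimes(2m-n)}$. By Lemma~\ref{zeroCond}, Proposition~\ref{decompositionProp}(d) and Lemma~\ref{zeroIntersection}(b), this vanishes iff $r'=0$, i.e.\ iff $r\,\big(\triangle_\mathbb{H}^{\otimes(n-m)}\otimes1^{\otimes(2m-n)}\big)^\vee=0$. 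The kernel is then read off from Proposition~\ref{trianglel}, an imported result expressing this symmetric element in the $e^{(n)}_{\ell,\mathbb{H}}$, $\ell\le m$, with nonzero rational coefficients.

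\textbf{How yours differs.} You use Proposition~\ref{decompositionProp}(e) to reduce everything to whether $e^{(n)}_{\ell,\mathbb{H}}e^{(n)}_{m,\mathbb{O}}$ vanishes. Your nonvanishing half is the paper's chain specialized to $r=e^{(n)}_{\ell,\mathbb{H}}$. Your recursion $e^{(n)}_{\ell,\mathbb{H}}(\triangle_\mathbb{H}\otimes1^{\otimes(n-2)})=\triangle_\mathbb{H}\otimes e^{(n-2)}_{\ell-1,\mathbb{H}}$ replaces Proposition~\ref{trianglel}; iterated, symmetrized and compared with iterated Proposition~\ref{decompositionProp}(c), it reproves that proposition with explicit products of $\beta$'s as coefficients. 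Your route is more self-contained and yields Corollary~\ref{MainLemma} plus an explicit formula for $e^{(n)}_{\ell,\mathbb{H}}e^{(n)}_{m,\mathbb{O}}$. The paper's route is shorter given the citation and describes the kernel elementwise without simplicity.

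\textbf{The recursion needs a different justification.} It does not follow from Proposition~\ref{decompositionProp}(a) ``factor by factor, exactly as in (c)''.
\begin{itemize}
\item For $\ell<n$ the product \eqref{emn} contains the factor $m'=n$, namely $\triangle^{(n)}_\mathbb{H}/\beta^{(n)}_{\ell,\mathbb{H}}$. Part (a) does not cover it, since (a) requires $m<n$ and $n\ge4$. The proof of (c) sidesteps this factor by turning it into the outer symmetrization.
\item Likewise, (d) only gives $e^{(n)}_{n,\mathbb{H}}\triangle^{(n)}_\mathbb{H}=0$ for the symmetrized $\triangle^{(n)}_\mathbb{H}$, not your $\ell=n$ convention directly.
\end{itemize}
A uniform fix:
\begin{itemize}
\item By Lemmas~\ref{span} and~\ref{triangleLemma}, $e^{(n)}_{\ell,\mathbb{H}}(\triangle_\mathbb{H}\otimes1^{\otimes(n-2)})=\triangle_\mathbb{H}\otimes s$ for some $s\in\Sym^{n-2}\mathbb{H}$.
\item By \eqref{eq_sym_1c}, $(\triangle_\mathbb{H}\otimes s)^\vee=e^{(n)}_{\ell,\mathbb{H}}\triangle^{(n)}_\mathbb{H}$.
\item For $\ell=n$ this is $0$ by (d).
\item For $\ell<n$ it equals $\beta^{(n)}_{\ell,\mathbb{H}}e^{(n)}_{\ell,\mathbb{H}}=(\triangle_\mathbb{H}\otimes e^{(n-2)}_{\ell-1,\mathbb{H}})^\vee$, by (b) and (c). Lagrange interpolation gives $\triangle^{(n)}_\mathbb{H}=\sum_k\beta^{(n)}_{k,\mathbb{H}}e^{(n)}_{k,\mathbb{H}}$, which supplies the eigenvalue relation.
\item Lemma~\ref{zeroCond} then forces $s=0$, resp.\ $s=e^{(n-2)}_{\ell-1,\mathbb{H}}$.
\end{itemize}

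\textbf{The association step.} Your association step is right, but it rests on Lemmas~\ref{span} and~\ref{triangleLemma}, not Lemma~\ref{center}: check it on $x^{\otimes n}$ with $x\in\mathbb{H}$. With it, the ``absorption'' in your nonvanishing half becomes the one-line identity
$e^{(n)}_{\ell,\mathbb{H}}\big(\triangle_\mathbb{O}^{\otimes(n-m)}\otimes e^{(2m-n)}_{2m-n,\mathbb{O}}\big)=\triangle_\mathbb{O}^{\otimes(n-m)}\otimes e^{(2m-n)}_{\ell-n+m,\mathbb{H}}e^{(2m-n)}_{2m-n,\mathbb{O}}$.
So you never need to multiply two symmetrizations.
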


\begin{proof}
Let $r\in\Sym^n\mathbb{H}$. By Lemma~\ref{span}, $r=\sum_{i=1}^k\alpha_i x_i^{\otimes n}$ 
for some $x_i\in\mathbb{H}$ and $\alpha_i\in\mathbb{R}$.
Set
\begin{equation*}
r'=\sum_{i=1}^k\alpha_iN(x_i)^{n-m}x_i^{\otimes(2m-n)}\in\Sym^{2m-n}\mathbb{H}.
\end{equation*}
By \eqref{emnO} and Lemma~\ref{triangleLemma}, $r\cdot e_{m,\mathbb{O}}^{(n)}=0$ if and only if
\begin{equation*}
r\cdot\big(\triangle_\mathbb{O}^{\otimes(n-m)}\otimes e_{2m-n,\mathbb{O}}^{(2m-n)}\big)^\vee
=\big(\triangle_\mathbb{O}^{\otimes(n-m)}\otimes r'e_{2m-n,\mathbb{O}}^{(2m-n)}\big)^\vee=0.
\end{equation*}
By Lemma~\ref{zeroCond}, applied to $\mathcal{C}=\mathbb{O}$, this happens if and only if $r'e_{2m-n,\mathbb{O}}^{(2m-n)}=0$.
By Proposition~\ref{decompositionProp}(d), applied to $\mathcal{C}=\mathbb{O}$ and $2m-n$ instead of $n$,
this condition holds if and only if $r'\in\Sym^{2m-n}\mathbb{O}\cdot\triangle_\mathbb{O}^{(2m-n)}$.
By Lemma~\ref{zeroIntersection}(b), applied to $2m-n$ instead of $n$,
\begin{equation*}
\Sym^{2m-n}\mathbb{H}\cap\Sym^{2m-n}\mathbb{O}\cdot\triangle_\mathbb{O}^{(2m-n)}=\{0\}.
\end{equation*}
Hence, $r\cdot e_{m,\mathbb{O}}^{(n)}=0$ if and only if $r'=0$.

Similarly, by Lemma~\ref{triangleLemma} and Lemma~\ref{zeroCond}, applied to $\mathcal{C}=\mathbb{H}$,
\begin{equation*}
r\cdot\big(\triangle_\mathbb{H}^{\otimes(n-m)}\otimes1^{\otimes(2m-n)}\big)^\vee
=\big(\triangle_\mathbb{H}^{\otimes(n-m)}\otimes r'\big)^\vee=0
\end{equation*}
if and only if $r'=0$.

Thus, $r\in\Ker(\varphi)$ if and only if 
$r\cdot\big(\triangle_\mathbb{H}^{\otimes(n-m)}\otimes1^{\otimes(2m-n)}\big)^\vee=0$.
By Proposition~\ref{trianglel}, applied to $\mathcal{C}=\mathbb{H}$ and $\ell=n-m$,
\begin{equation*}
\big(\triangle_\mathbb{H}^{\otimes(n-m)}\otimes1^{\otimes(2m-n)}\big)^\vee
=\sum_{\ell=\left\lceil \frac{n}{2} \right\rceil}^m \alpha_\ell e_{\ell,\mathbb{H}}^{(n)},
\end{equation*}
with nonzero rational numbers $\alpha_\ell$'s.
It follows from \eqref{decompositionH} that $r\in\Ker(\varphi)$ if and only if 
$r\in\bigoplus_{\ell=m+1}^n\mathbb{S}_\ell^{(n)}\mathbb{H}$, as claimed.
\end{proof}

Note that by \eqref{decompositionH},
\begin{equation*}
\Sym^n\mathbb{H}\cdot e_{m,\mathbb{O}}^{(n)}=
\bigoplus_{\ell=\left\lceil \frac{n}{2} \right\rceil}^m\Sym^n\mathbb{H}\cdot e_{\ell,\mathbb{H}}^{(n)}e_{m,\mathbb{O}}^{(n)}
\oplus
\bigoplus_{\ell=m+1}^n\Sym^n\mathbb{H}\cdot e_{\ell,\mathbb{H}}^{(n)}e_{m,\mathbb{O}}^{(n)}.
\end{equation*}
Hence, the following result is a corollary of Proposition~\ref{isom}.

\begin{corollary}
\label{MainLemma}
If $\left\lceil \frac{n}{2} \right\rceil\le m<\ell\le n$, then $e_{\ell,\mathbb{H}}^{(n)}e_{m,\mathbb{O}}^{(n)}=0$.

If $\left\lceil \frac{n}{2} \right\rceil\le\ell\le m\le n$, then the map
\begin{equation*}
\Sym^n\mathbb{H}\cdot e_{\ell,\mathbb{H}}^{(n)}\rightarrow\Sym^n\mathbb{H}\cdot e_{\ell,\mathbb{H}}^{(n)}e_{m,\mathbb{O}}^{(n)}
\end{equation*}
given by
\begin{equation*}
r\cdot e_{\ell,\mathbb{H}}^{(n)}\mapsto r\cdot e_{\ell,\mathbb{H}}^{(n)}e_{m,\mathbb{O}}^{(n)},
\end{equation*}
for $r\in\Sym^n\mathbb{H}$, is an isomorphism of $\mathbb{R}$-algebras, with units
$e_{\ell,\mathbb{H}}^{(n)}$ and $e_{\ell,\mathbb{H}}^{(n)}e_{m,\mathbb{O}}^{(n)}$, respectively.
\end{corollary}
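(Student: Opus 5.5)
Both assertions follow from Proposition~\ref{isom} combined with the decomposition \eqref{decompositionH} into the central summands $\mathbb{S}_\ell^{(n)}\mathbb{H}=\Sym^n\mathbb{H}\cdot e_{\ell,\mathbb{H}}^{(n)}$.

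For the vanishing statement, fix $m<\ell\le n$. Then $e_{\ell,\mathbb{H}}^{(n)}=1^{\otimes n}\cdot e_{\ell,\mathbb{H}}^{(n)}$ lies in $\mathbb{S}_\ell^{(n)}\mathbb{H}\subseteq\bigoplus_{\ell'=m+1}^n\mathbb{S}_{\ell'}^{(n)}\mathbb{H}$. By Proposition~\ref{isom} this summand is exactly $\Ker(\varphi)$, so $\varphi(e_{\ell,\mathbb{H}}^{(n)})=e_{\ell,\mathbb{H}}^{(n)}e_{m,\mathbb{O}}^{(n)}=0$.

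For the isomorphism statement, fix $\ell\le m$ and consider the restriction of $\varphi$ to $\mathbb{S}_\ell^{(n)}\mathbb{H}$, which sends $r e_{\ell,\mathbb{H}}^{(n)}\mapsto r e_{\ell,\mathbb{H}}^{(n)}e_{m,\mathbb{O}}^{(n)}$. The proof is in four steps.

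\begin{enumerate}[label={\rm(\arabic*)},leftmargin=*]
\item \emph{Multiplicativity.} Since $e_{m,\mathbb{O}}^{(n)}$ commutes and associates with all of $\Sym^n\mathbb{O}$ (Proposition~\ref{decompositionProp}(b) for $\mathcal{C}=\mathbb{O}$) and is idempotent, we have $(xe_{m,\mathbb{O}}^{(n)})(ye_{m,\mathbb{O}}^{(n)})=(xy)e_{m,\mathbb{O}}^{(n)}$ for $x,y\in\Sym^n\mathbb{H}$. Here $\Sym^n\mathbb{H}$ is associative, being a subalgebra of a tensor power of the associative algebra $\mathbb{H}$. Hence the map is an $\mathbb{R}$-algebra homomorphism.
\item \emph{Units.} The unit $e_{\ell,\mathbb{H}}^{(n)}$ is sent to $e_{\ell,\mathbb{H}}^{(n)}e_{m,\mathbb{O}}^{(n)}$, which is therefore the unit of the image.
\item \emph{Injectivity.} The kernel of the restriction is $\mathbb{S}_\ell^{(n)}\mathbb{H}\cap\bigoplus_{\ell'=m+1}^n\mathbb{S}_{\ell'}^{(n)}\mathbb{H}$. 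Because $\ell\le m$ and the sum in \eqref{decompositionH} is direct, this intersection is $\{0\}$.
\item \emph{Surjectivity.} The image is $\Sym^n\mathbb{H}\cdot e_{\ell,\mathbb{H}}^{(n)}e_{m,\mathbb{O}}^{(n)}$ by definition.
\end{enumerate}

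The only step needing care is the associativity bookkeeping in (1). Products such as $r\cdot e_{\ell,\mathbb{H}}^{(n)}e_{m,\mathbb{O}}^{(n)}$ live in the nonassociative algebra $\Sym^n\mathbb{O}$, so they must be handled through the fact that $e_{m,\mathbb{O}}^{(n)}$ lies in the nucleus and the center of $\Sym^n\mathbb{O}$ (Lemma~\ref{center}). Once this is observed, the expression is unambiguous and the homomorphism property follows.
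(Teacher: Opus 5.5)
Your proposal is correct and follows the paper's route. The paper likewise derives both claims directly from Proposition~\ref{isom}, namely that the kernel of $r\mapsto r\cdot e_{m,\mathbb{O}}^{(n)}$ is $\bigoplus_{\ell=m+1}^n\mathbb{S}_\ell^{(n)}\mathbb{H}$, combined with the direct-sum decomposition \eqref{decompositionH}; you additionally spell out the homomorphism, unit, injectivity and surjectivity checks that the paper leaves implicit.
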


\section[\texorpdfstring{The algebra $\Sym^n\mathbb{H}$}
                          {The algebra Sym n(H)}]
        {The algebra $\Sym^n\mathbb{H}$}

In this section we consider the algebra
\begin{equation*}
\Sym^n\mathbb{H}=\bigoplus_{\ell=\left\lceil \frac{n}{2} \right\rceil}^n\mathbb{S}_\ell^{(n)}\mathbb{H}.
\end{equation*}
We first find a complete set of primitive orthogonal idempotents in 
$\mathbb{S}_\ell^{(n)}\mathbb{H}_\mathbb{C}=\mathbb{S}_\ell^{(n)}\mathbb{H}\otimes_\mathbb{R}\mathbb{C}$ 
for each integer $\ell$ between $\left\lceil \frac{n}{2} \right\rceil$ and $n$.
From this we deduce a complete set of primitive orthogonal idempotents in $\mathbb{S}_\ell^{(n)}\mathbb{H}$ for each $\ell$.
Then we get as a corollary complete sets of primitive orthogonal idempotents in 
$\Sym^n\mathbb{H}_\mathbb{C}=\Sym^n\mathbb{H}\otimes_\mathbb{R}\mathbb{C}$ and in $\Sym^n\mathbb{H}$.
We use the following notation.

\begin{notation}
\label{aac}
For $\alpha=\alpha_1+\alpha_2\sqrt{-1}\in\mathbb{C}$, with $\alpha_1,\alpha_2\in\mathbb{R}$, we write
$\alpha^c=\alpha_1-\alpha_2\sqrt{-1}$ for its complex conjugate.
We extend this complex conjugation to 
$\Sym^n\mathbb{H}_\mathbb{C}$ by setting 
$(x+\sqrt{-1}y)^c=x-\sqrt{-1}y$, for $x,y\in\Sym^n\mathbb{H}$.
In particular, $z+z^c\in\Sym^n\mathbb{H}$ for each $z\in\Sym^n\mathbb{H}_\mathbb{C}$.

We denote
\begin{equation}
\label{a}
a:=\frac{1}{2}(1+\sqrt{-1}e_1)\in\mathbb{C}[e_1].
\end{equation}
Then
\begin{equation}
\label{ac}
a^c=\frac{1}{2}(1-\sqrt{-1}e_1).
\end{equation}
In particular,
\begin{equation}
\label{ae2}
ae_2=e_2a^c\text{ and }a^ce_2=e_2a.
\end{equation}
The elements $a$ and $a^c$ of $\mathbb{C}[e_1]$ satisfy
\begin{equation}
\label{aorth}
a^2=a,\ (a^c)^2=a^c,\ aa^c=0,\text{ and }a+a^c=1.
\end{equation}
In particular, the set $\{a,a^c\}$ is a complete set of primitive orthogonal idempotents in $\mathbb{C}[e_1]$.
Note that
\begin{equation*}
N(a)=\big(\frac{1}{2}\big)^2+\big(\frac{1}{2}\sqrt{-1}\big)^2=0
\text{ and }
N(a^c)=\big(\frac{1}{2}\big)^2+\big(-\frac{1}{2}\sqrt{-1}\big)^2=0.
\end{equation*}

We also denote
\begin{equation}
\label{square}
\square:=\triangle_{\mathbb{R}[e_1]}=\frac{1}{2}(1\otimes1+e_1\otimes e_1)\in\Sym^2\mathbb{R}[e_1].
\end{equation}
Note that
\begin{equation}
\label{saac}
\square=2(a\otimes a^c)^\vee.
\end{equation}
and, since $1\otimes1=(a+a^c)\otimes(a+a^c)$,
\begin{equation}
\label{s1}
1\otimes1-\square=a\otimes a+a^c\otimes a^c.
\end{equation}
Also, by \eqref{triangleReH},
\begin{equation}
\label{stH}
\square\triangle_\mathbb{H}=\triangle_\mathbb{H}.
\end{equation}
By \eqref{triangleSq}, applied to $\mathcal{C}=\mathbb{R}[e_1]$,
\begin{equation}
\label{ssq}
\square^2=\square.
\end{equation}
By Lemma~\ref{triangleLemma}, applied to $\mathcal{C}=\mathbb{R}[e_1]$,
\begin{equation}
\label{aas0}
(a\otimes a)\square=N(a)\square=0\text{ and }
(a^c\otimes a^c)\square=N(a^c)\square=0.
\end{equation}
Similarly,
\begin{equation}
\label{aasH}
(a\otimes a)\triangle_\mathbb{H}=0\text{ and }
(a^c\otimes a^c)\triangle_\mathbb{H}=0,
\end{equation}
by Lemma~\ref{triangleLemma}, applied to $\mathcal{C}=\mathbb{H}$.
\end{notation}

Note that $\dim_\mathbb{C}\Sym^n\mathbb{C}[e_1]=n+1$ since
the set $\big\{(e_0^{\otimes k_0}\otimes e_1^{\otimes k_1})^\vee\mid k_0+k_1=n\big\}$
forms a basis of $\Sym^n\mathbb{R}[e_1]$ over $\mathbb{R}$.

Denote $a_0:=a$ and $a_1:=a^c$.
For an integer $k$ between $0$ and $n$, we denote
\[I_k:=\big\{(i_1,\ldots,i_n)\in\{0,1\}^n\mid i_1+\ldots+i_n=n-k\big\}.\] Then
\begin{equation}
\label{akacnk}
\binom{n}{k}\big(a^{\otimes k}\otimes(a^c)^{\otimes(n-k)}\big)^\vee
=\sum_{\mathbf{i}\in I_k}a_{i_1}\otimes\cdots\otimes a_{i_n}
=\sum_{\mathbf{i}\in I_k}\sigma\big(a_{i_1}\otimes\cdots\otimes a_{i_n}\big),
\end{equation}
for each $\sigma\in\mathfrak{S}_n$, since this is an element of $\Sym^n\mathbb{C}[e_1]$.

\begin{lemma}
\label{sCe}
Let $n$ be a positive integer.
The set
\begin{align}
\label{sCeEq}
\Big\{\binom{n}{k}\big(a^{\otimes k}\otimes(a^c)^{\otimes(n-k)}\big)^\vee\mid
k=0,\ldots,n\Big\}
\end{align}
is a complete set of primitive orthogonal idempotents in $\Sym^n\mathbb{C}[e_1]$.
\end{lemma}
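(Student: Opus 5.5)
The set $\{a,a^c\}$ splits $\mathbb{C}[e_1]$ into two one-dimensional pieces, so I plan to show that $\Sym^n\mathbb{C}[e_1]$ is commutative of dimension $n+1$ and that the $n+1$ listed elements are nonzero orthogonal idempotents summing to $1^{\otimes n}$. Write $f_k:=\binom{n}{k}\big(a^{\otimes k}\otimes(a^c)^{\otimes(n-k)}\big)^\vee$. By \eqref{akacnk}, $f_k=\sum_{\mathbf{i}\in I_k}a_{i_1}\otimes\cdots\otimes a_{i_n}$.

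The first step is orthogonality and idempotence. For $\mathbf{i}\in I_k$ and $\mathbf{j}\in I_{k'}$, the product $(a_{i_1}\otimes\cdots\otimes a_{i_n})(a_{j_1}\otimes\cdots\otimes a_{j_n})$ equals $a_{i_1}a_{j_1}\otimes\cdots\otimes a_{i_n}a_{j_n}$. By \eqref{aorth}, this is $a_{i_1}\otimes\cdots\otimes a_{i_n}$ when $\mathbf{i}=\mathbf{j}$ and $0$ otherwise. Since $I_k\cap I_{k'}=\emptyset$ for $k\ne k'$, we get $f_kf_{k'}=0$ for $k\ne k'$ and $f_k^2=f_k$.

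The second step is completeness. Expanding $1^{\otimes n}=(a+a^c)^{\otimes n}=\sum_{\mathbf{i}\in\{0,1\}^n}a_{i_1}\otimes\cdots\otimes a_{i_n}$ and grouping the terms by $k$ gives $\sum_{k=0}^nf_k=1^{\otimes n}$.

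The third step is that each $f_k$ is nonzero. The elementary tensors $a_{i_1}\otimes\cdots\otimes a_{i_n}$ form a basis of $\mathbb{C}[e_1]^{\otimes n}$, because $\{a,a^c\}$ is a $\mathbb{C}$-basis of $\mathbb{C}[e_1]$. Each $f_k$ is a sum of distinct basis vectors over the nonempty set $I_k$, so $f_k\neq0$. The same observation shows the $f_k$ are linearly independent, hence a basis of the $(n+1)$-dimensional space $\Sym^n\mathbb{C}[e_1]$.

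The last step is primitivity, which is the only real point. Since $\mathbb{C}[e_1]$ is commutative, so is $\Sym^n\mathbb{C}[e_1]$. Because the $f_k$ form a basis of orthogonal idempotents, $\Sym^n\mathbb{C}[e_1]\cong\mathbb{C}^{n+1}$ via the coordinates in this basis, and each left ideal $\Sym^n\mathbb{C}[e_1]\cdot f_k=\mathbb{C}f_k$ is one-dimensional. A one-dimensional module is indecomposable, so each $f_k$ is primitive by Lemma~\ref{indecomposable}. Alternatively, Corollary~\ref{cpoi} applies to the decomposition $\bigoplus_k\mathbb{C}f_k$ into indecomposable left ideals. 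I expect no real obstacle; the only care needed is to justify the basis claim for the elementary tensors in $a,a^c$.
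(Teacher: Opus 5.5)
Your proof is correct and follows essentially the same route as the paper: orthogonality and idempotence from $a^2=a$, $(a^c)^2=a^c$, $aa^c=0$ applied to the expansion \eqref{akacnk}, completeness from $(a+a^c)^{\otimes n}=1^{\otimes n}$, and primitivity from the count $n+1=\dim_\mathbb{C}\Sym^n\mathbb{C}[e_1]$. The differences are minor. You prove $f_k\neq0$ using the basis of elementary tensors in $a,a^c$, where the paper uses the $\frac{1}{2^n}1^{\otimes n}$ coefficient. You also spell out the dimension count as $\Sym^n\mathbb{C}[e_1]\cdot f_k=\mathbb{C}f_k$ being one-dimensional, which the paper leaves implicit.
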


\begin{proof}
Let $k,\ell$ be integers between $0$ and $n$. Then
\begin{equation*}
\begin{aligned}
&\binom{n}{k}\big(a^{\otimes k}\otimes(a^c)^{\otimes(n-k)}\big)^\vee\cdot
\binom{n}{\ell}\big(a^{\otimes \ell}\otimes(a^c)^{\otimes(n-\ell)}\big)^\vee\\
&\stackrel{\eqref{eq_sym_1c}}{=}\binom{n}{k}\binom{n}{\ell}
\Big(\big(a^{\otimes k}\otimes(a^c)^{\otimes(n-k)}\big)^\vee
\big(a^{\otimes \ell}\otimes(a^c)^{\otimes(n-\ell)}\big)\Big)^\vee\\
&\stackrel{\eqref{akacnk}}{=}\binom{n}{\ell}
\sum_{\mathbf{i}\in I_k}\Big(\big(a_{i_1}\otimes\cdots\otimes a_{i_n}\big)
\big(a^{\otimes \ell}\otimes(a^c)^{\otimes(n-\ell)}\big)\Big)^\vee\\
&\stackrel{\eqref{aorth}}{=}\binom{n}{\ell}
\sum_{\mathbf{i}\in I_k}\delta_{i_1,0}\cdots\delta_{i_\ell,0}\delta_{i_{\ell+1},1}\cdots\delta_{i_n,1}
\cdot\big(a^{\otimes\ell}\otimes(a^c)^{\otimes(n-\ell)}\big)^\vee\\
&=\delta_{k\ell}\binom{n}{\ell}\cdot\big(a^{\otimes\ell}\otimes(a^c)^{\otimes(n-\ell)}\big)^\vee.
\end{aligned}
\end{equation*}
Hence, the elements in the set \eqref{sCeEq} are orthogonal idempotents.
They are not zero since, by \eqref{a} and \eqref{ac},
\begin{equation*}
\big(a^{\otimes k}\otimes(a^c)^{\otimes(n-k)}\big)^\vee=\frac{1}{2^n}\cdot1^{\otimes n}+\ldots,
\end{equation*}
for each $k$.
Since $a+a^c=1$,
\begin{equation*}
1^{\otimes n}=(a+a^c)^{\otimes n}=\sum_{k=0}^n
\binom{n}{k}\big(a^{\otimes k}\otimes(a^c)^{\otimes(n-k)}\big)^\vee.
\end{equation*}
Therefore, the set \eqref{sCeEq} is complete.
Since the number of elements in the set \eqref{sCeEq} is $n+1=\dim_\mathbb{C}\Sym^n\mathbb{C}[e_1]$,
these elements are primitive.
\end{proof}

\begin{lemma}
\label{isomCe}
Let $n$ be a positive integer. Then the map $r\mapsto r\cdot e_{n,\mathbb{H}}^{(n)}$, for $r\in\Sym^n\mathbb{C}[e_1]$,
gives an isomorphism
\begin{equation}
\label{isomCeEq}
\Sym^n\mathbb{C}[e_1]\cong\Sym^n\mathbb{C}[e_1]\cdot e_{n,\mathbb{H}}^{(n)}
\end{equation}
of $\mathbb{C}$-algebras.
\end{lemma}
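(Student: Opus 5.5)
The map $\psi\colon r\mapsto r\cdot e_{n,\mathbb{H}}^{(n)}$ is a $\mathbb{C}$-algebra homomorphism. Indeed, $e_{n,\mathbb{H}}^{(n)}$ is a central idempotent of $\Sym^n\mathbb{H}_\mathbb{C}$ by Proposition~\ref{decompositionProp}(b), and $\Sym^n\mathbb{C}[e_1]\subseteq\Sym^n\mathbb{H}_\mathbb{C}$ is an associative subalgebra. Hence $(rs)e=(re)(se)$, and the unit $1^{\otimes n}$ maps to $e_{n,\mathbb{H}}^{(n)}$, which is the unit of the image. Surjectivity onto $\Sym^n\mathbb{C}[e_1]\cdot e_{n,\mathbb{H}}^{(n)}$ holds by definition, so the whole point is injectivity.

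For injectivity we use Proposition~\ref{decompositionProp}(d) for $\mathcal{C}=\mathbb{H}$ and $F=\mathbb{C}$. It gives
\[
\Sym^n\mathbb{H}_\mathbb{C}=T_n\mathbb{H}_\mathbb{C}\oplus\Sym^n\mathbb{H}_\mathbb{C}\cdot\triangle_\mathbb{H}^{(n)},
\]
where $1^{\otimes n}-e_{n,\mathbb{H}}^{(n)}=\sum_{m<n}e_{m,\mathbb{H}}^{(n)}$ is the unit of the second summand. So if $r e_{n,\mathbb{H}}^{(n)}=0$, then $r=r(1^{\otimes n}-e_{n,\mathbb{H}}^{(n)})\in\Sym^n\mathbb{H}_\mathbb{C}\cdot\triangle_\mathbb{H}^{(n)}$; here we use that $1-e_{n,\mathbb{H}}^{(n)}\in\mathbb{C}[\triangle^{(n)}_\mathbb{H}]\triangle^{(n)}_\mathbb{H}$ and that $\triangle_\mathbb{H}^{(n)}$ is central. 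It remains to show
\[
\Sym^n\mathbb{C}[e_1]\cap\Sym^n\mathbb{H}_\mathbb{C}\cdot\triangle_\mathbb{H}^{(n)}=\{0\},
\]
which is the complexified form of Lemma~\ref{zeroIntersection}(a).

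To get this, write $r=x+\sqrt{-1}y$ with $x,y\in\Sym^n\mathbb{R}[e_1]$ and reduce to the real case. The $\mathbb{R}$-linear complex conjugation of Notation~\ref{aac} fixes $\triangle_\mathbb{H}^{(n)}$, which has real coefficients. It is also multiplicative, since the multiplication on $\Sym^n\mathbb{H}_\mathbb{C}$ is the $\mathbb{C}$-linear extension of the real one. Therefore it preserves the ideal $\Sym^n\mathbb{H}_\mathbb{C}\cdot\triangle_\mathbb{H}^{(n)}$. Hence $x=\frac12(r+r^c)$ and $y=\frac{1}{2\sqrt{-1}}(r-r^c)$ both lie in
\[
\Sym^n\mathbb{H}_\mathbb{C}\cdot\triangle_\mathbb{H}^{(n)}\cap\Sym^n\mathbb{H}=\Sym^n\mathbb{H}\cdot\triangle_\mathbb{H}^{(n)}.
\]
The last equality holds because, writing $s=s_1+\sqrt{-1}s_2$ with $s_1,s_2\in\Sym^n\mathbb{H}$, the real part of $s\triangle_\mathbb{H}^{(n)}$ is $s_1\triangle_\mathbb{H}^{(n)}$. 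Lemma~\ref{zeroIntersection}(a) then gives $x=y=0$, so $r=0$.

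The main obstacle is this reduction from $\mathbb{C}$ to $\mathbb{R}$; once it is in place, the argument is formal. There is an alternative that avoids conjugation altogether. Rerun the proof of Lemma~\ref{zeroIntersection}(a) with complex coefficients $\beta_{(k_0,k_1)}$, using Proposition~\ref{LocalGlobal} only for the real elements $x$ and $y$ separately. The polynomial $Q$ then has complex coefficients, and Lemma~\ref{Noga} still applies over $\mathbb{C}$ on the real set $S$. I would present the conjugation argument and keep this as a fallback.
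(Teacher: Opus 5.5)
Your proposal is correct and follows the paper's proof. Injectivity goes through Proposition~\ref{decompositionProp}(d), you split $r=r_1+\sqrt{-1}r_2$ into real and imaginary parts lying in $\Sym^n\mathbb{H}\cdot\triangle_\mathbb{H}^{(n)}$, and you conclude with Lemma~\ref{zeroIntersection}(a); you even spell out why those parts stay in the ideal, which the paper leaves implicit. The one thing to add is the case $n=1$, which you should dispose of separately: there $e_{1,\mathbb{H}}^{(1)}=1_\mathbb{H}$ and the map is the identity, while $\triangle_\mathbb{H}^{(n)}$ and Proposition~\ref{decompositionProp} are only set up for $n\ge2$.
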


\begin{proof}
The case $n=1$ is trivial, so assume $n\ge2$.
Since this map is clearly an epimorphism, we need only to show that its kernel is zero.

Indeed, let $r\in\Sym^n\mathbb{C}[e_1]$ be such that $r\cdot e_{n,\mathbb{H}}^{(n)}=0$.
By Proposition~\ref{decompositionProp}(d), applied to $\mathcal{C}=\mathbb{H}$,
$r\in\Sym^n\mathbb{H}_\mathbb{C}\cdot\triangle_\mathbb{H}^{(n)}$.
Write $r=r_1+\sqrt{-1}r_2$ with $r_1,r_2\in\Sym^n\mathbb{R}[e_1]$. Then
$r_1,r_2\in\Sym^n\mathbb{H}\cdot\triangle_\mathbb{H}^{(n)}$.
Hence, by Lemma~\ref{zeroIntersection}(a), $r_1=r_2=0$, so $r=0$, as claimed.
\end{proof}

We shall also need the formula for $x^n+y^n$ expressed as a polynomial in terms of the 
elementary symmetric polynomials $x+y$ and $xy$, which is given by Waring's formula:
\begin{lemma}[see {\cite[Eq.\ (1)]{Gou99}}]
\label{Waring}
\begin{equation}
\label{WaringEq}
x^n+y^n=\sum_{k=0}^{\left\lfloor \frac{n}{2} \right\rfloor}(-1)^k\frac{n}{n-k}\binom{n-k}{k}(x+y)^{n-2k}(xy)^k.
\end{equation}
\end{lemma}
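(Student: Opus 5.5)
The cleanest route is through generating functions, with $p_n:=x^n+y^n$, $s:=x+y$ and $q:=xy$. We work in the formal power series ring $\mathbb{Q}[x,y][[t]]$ and take $n\ge1$; this is the only case used in the paper. The case $n=0$ would have to be read with the convention $0/0\cdot\binom{0}{0}=2$.

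The key observation is the factorization $1-st+qt^2=(1-xt)(1-yt)$. Taking $-\log$ of both sides and using $-\log(1-xt)=\sum_{n\ge1}x^nt^n/n$ gives
\begin{equation*}
-\log\big(1-st+qt^2\big)=\sum_{n\ge1}\frac{p_n}{n}t^n .
\end{equation*}
This identity is legitimate in formal power series because all arguments of $\log$ have constant term $1$.

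Next I expand the left-hand side a second way. Put $u:=t(s-qt)$, so that $1-st+qt^2=1-u$. Then
\begin{equation*}
-\log(1-u)=\sum_{m\ge1}\frac{u^m}{m}=\sum_{m\ge1}\frac{t^m(s-qt)^m}{m}.
\end{equation*}
Expanding $(s-qt)^m$ by the binomial theorem, choosing $k$ factors equal to $-qt$ contributes $\binom{m}{k}s^{m-k}(-q)^kt^{m+k}$. To collect the coefficient of $t^n$ I set $m=n-k$. The constraint $k\le m$ becomes $0\le k\le\lfloor n/2\rfloor$, and for such $k$ we have $n-k\ge1$, so the denominators are nonzero. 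The coefficient of $t^n$ is therefore
\begin{equation*}
\sum_{k=0}^{\lfloor n/2\rfloor}\frac{(-1)^k}{n-k}\binom{n-k}{k}s^{n-2k}q^k .
\end{equation*}

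Comparing with the coefficient $p_n/n$ from the first expansion and multiplying by $n$ gives exactly \eqref{WaringEq}.

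The only point needing care is the index bookkeeping in the double sum, namely the range of $k$ and the nonvanishing of $n-k$; there is no real obstacle. As a fallback, one can argue by induction on $n$ using the Newton recurrence $p_n=s\,p_{n-1}-q\,p_{n-2}$, with $p_1=s$ and $p_2=s^2-2q$. The inductive step then rests on the identity
\begin{equation*}
\frac{n}{n-k}\binom{n-k}{k}=\binom{n-k}{k}+\binom{n-k-1}{k-1},
\end{equation*}
which reduces the claim to Pascal's rule.
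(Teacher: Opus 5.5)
Your proof is correct. The factorization $1-st+qt^2=(1-xt)(1-yt)$, the two expansions of $-\log$ in $\mathbb{Q}[x,y][[t]]$, and the index bookkeeping all check out: $m=n-k\ge1$, and $k\le m$ holds exactly when $0\le k\le\lfloor n/2\rfloor$. Your inductive fallback also works, via $p_n=s\,p_{n-1}-q\,p_{n-2}$ together with $\frac{n}{n-k}\binom{n-k}{k}=\binom{n-k}{k}+\binom{n-k-1}{k-1}$ and two applications of Pascal's rule.

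The paper gives no argument for this lemma. It simply quotes the formula from Gould \cite{Gou99}, so your proposal differs from the paper mainly in being self-contained.

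Your argument also makes clear where the identity may legitimately be applied. The logarithmic proof gives an identity in $\mathbb{Q}[x,y]$, so it holds for any two commuting elements of a commutative $\mathbb{Q}$-algebra. The inductive proof shows the coefficients $\frac{n}{n-k}\binom{n-k}{k}$ are integers, so it holds in any commutative ring.

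This generality is exactly what the paper uses without comment. It applies the formula to $\alpha,\alpha^c\in\mathbb{C}$, but also to $x=a$, $y=a^c$ and to $x=a^{\otimes2}$, $y=(a^c)^{\otimes2}$. These live inside the commutative, associative symmetrized product $(x_1,x_2)\mapsto(x_1\otimes x_2)^\vee$ on $\bigoplus_j\Sym^j\mathbb{C}[e_1]$.

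Your restriction to $n\ge1$ is harmless. In every application in the paper the exponent is $n-2k\ge1$ or $\frac{n}{2}-k\ge1$.
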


We are ready to prove the following theorem.
\begin{theorem}
\label{Theorem1}
Let $\ell$ be an integer between $\left\lceil \frac{n}{2} \right\rceil$ and $n$.
\begin{enumerate}[label={\rm(\alph*)},leftmargin=*]
\item
The set
\begin{equation}
\label{Thm1a}
\Big\{\binom{n}{k}\big(a^{\otimes k}\otimes(a^c)^{\otimes(n-k)}\big)^\vee e^{(n)}_{\ell,\mathbb{H}}\mid
k\in\{n-\ell,\ldots,\ell\}\Big\}
\end{equation}
is a complete set of $2\ell-n+1$ primitive orthogonal idempotents in $\mathbb{S}_\ell^{(n)}\mathbb{H}\otimes_\mathbb{R}\mathbb{C}$.

\item
If $n$ is odd, then
\begin{equation}
\label{Thm1b}
\binom{n}{k}\frac{n-2k}{2^k}
\sum_{i=0}^{\left\lfloor \frac{n-1}{2} \right\rfloor-k}
\left(-\frac{1}{2}\right)^i
\frac{\binom{n-2k-i}{i}}{n-2k-i}
\big(1^{\otimes(n-2k-2i)}\otimes\square^{\otimes(k+i)}\big)^\vee\cdot e^{(n)}_{\ell,\mathbb{H}}\,,
\end{equation}
$k\in\{n-\ell,\ldots,\left\lfloor \frac{n-1}{2} \right\rfloor\}$,
form a complete set of $\frac{2\ell-n+1}{2}$ primitive orthogonal idempotents in $\mathbb{S}_\ell^{(n)}\mathbb{H}$.

\item
If $n$ is even, then
\[
\left(\frac{1}{2}\right)^{\frac{n}{2}}\binom{n}{\frac{n}{2}}
\big(\square^{\otimes\frac{n}{2}}\big)^\vee\cdot e_{\ell,\mathbb{H}}^{(n)}
\]
together with
\begin{equation}
\label{Thm1c}
\begin{aligned}
\binom{n}{k}\frac{\frac{n}{2}-k}{2^k}
\sum_{i=0}^{\left\lfloor \frac{n}{4}-\frac{k}{2}\right\rfloor}
\left(-\frac{1}{4}\right)^i
\frac{\binom{\frac{n}{2}-k-i}{i}}{\frac{n}{2}-k-i}
&\big(\square^{\otimes(k+2i)}\otimes(1\otimes1-\square)^{\otimes(\frac{n}{2}-k-2i)}\big)^\vee\\
&\cdot\frac{1}{2}\big(1^{\otimes n}+\delta\cdot e_2^{\otimes n}\big)\cdot e^{(n)}_{\ell,\mathbb{H}}\,,
\end{aligned}
\end{equation}
$k\in\{n-\ell,\ldots,\frac{n}{2}-1\}$, $\delta\in\{-1,1\}$,
if $\ell>\left\lceil \frac{n}{2} \right\rceil$,
form a complete set of $2\ell-n+1$ primitive orthogonal idempotents in $\mathbb{S}_\ell^{(n)}\mathbb{H}$.
\end{enumerate}
\end{theorem}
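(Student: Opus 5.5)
The plan is to take the complete set of primitive orthogonal idempotents in $\Sym^n\mathbb{C}[e_1]$ from Lemma~\ref{sCe}, cut it down by the central idempotent $e^{(n)}_{\ell,\mathbb{H}}$, and obtain primitivity by counting against Proposition~\ref{matrixProp}. The real statements (b) and (c) then come from grouping complex-conjugate pairs.

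For (a), write $f_k:=\binom{n}{k}\big(a^{\otimes k}\otimes(a^c)^{\otimes(n-k)}\big)^\vee$. By Lemma~\ref{sCe} the $f_k$ are orthogonal idempotents with $\sum_k f_k=1^{\otimes n}$. Since $e^{(n)}_{\ell,\mathbb{H}}$ is a central idempotent that associates with everything (Proposition~\ref{decompositionProp}(b)), the elements $f_ke^{(n)}_{\ell,\mathbb{H}}$, $k=0,\dots,n$, are orthogonal idempotents of $\mathbb{S}^{(n)}_\ell\mathbb{H}_\mathbb{C}$ summing to its unit. It then suffices to show two things:
\begin{itemize}
\item $f_ke^{(n)}_{\ell,\mathbb{H}}=0$ exactly when $k<n-\ell$ or $k>\ell$;
\item once this is known, the surviving $2\ell-n+1$ idempotents are nonzero.
\end{itemize}
Since $\mathbb{S}^{(n)}_\ell\mathbb{H}_\mathbb{C}\cong M_{2\ell-n+1}(\mathbb{C})$ by Proposition~\ref{decompositionProp}(e) and Proposition~\ref{matrixProp}, any complete set of nonzero orthogonal idempotents there has at most $2\ell-n+1$ members. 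Each member is a sum of at least one primitive idempotent, and Corollary~\ref{cpoi} fixes the total number of primitives, so a complete set with exactly $2\ell-n+1$ nonzero members consists of primitives.

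To prove the vanishing criterion I will use \eqref{emnH}, which gives $e^{(n)}_{\ell,\mathbb{H}}=c\,(\triangle_\mathbb{H}^{\otimes(n-\ell)}\otimes e^{(2\ell-n)}_{2\ell-n,\mathbb{H}})^\vee$ with $c\ne0$. Expanding $f_k$ as in \eqref{akacnk} and using \eqref{eq_sym_1c}, every term puts some $a_{i}\otimes a_{j}$ against a factor $\triangle_\mathbb{H}$. By \eqref{aasH} such a term dies unless $\{i,j\}=\{0,1\}$. Hence $f_ke^{(n)}_{\ell,\mathbb{H}}=0$ when $k<n-\ell$ or $n-k<n-\ell$. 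Otherwise \eqref{saac} and \eqref{stH} give $(a\otimes a^c+a^c\otimes a)\triangle_\mathbb{H}=\triangle_\mathbb{H}$. Therefore $f_ke^{(n)}_{\ell,\mathbb{H}}$ equals a nonzero multiple of $(\triangle_\mathbb{H}^{\otimes(n-\ell)}\otimes g\,e^{(2\ell-n)}_{2\ell-n,\mathbb{H}})^\vee$, where $g$ is the corresponding idempotent $\binom{2\ell-n}{k-n+\ell}(a^{\otimes(k-n+\ell)}\otimes(a^c)^{\otimes(\ell-k)})^\vee\ne0$ of $\Sym^{2\ell-n}\mathbb{C}[e_1]$. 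By Lemma~\ref{isomCe} we have $g\,e^{(2\ell-n)}_{2\ell-n,\mathbb{H}}\ne0$, and then Lemma~\ref{zeroCond} (over $\mathbb{C}$) shows the whole element is nonzero.

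For (b) and (c), I will use that complex conjugation swaps $a$ and $a^c$, so $f_k^c=f_{n-k}$, and that $e^{(n)}_{\ell,\mathbb{H}}$ is real.
\begin{itemize}
\item \emph{$n$ odd.} The real idempotents $(f_k+f_{n-k})e^{(n)}_{\ell,\mathbb{H}}$, $n-\ell\le k\le\frac{n-1}{2}$, are orthogonal, nonzero and complete. There are $\frac{2\ell-n+1}{2}$ of them, which is the number of primitives in $M_{\frac{2\ell-n+1}{2}}(\mathbb{H})$, so they are primitive by the same counting.
\item \emph{$n$ even, middle term.} The middle term is $f_{n/2}$, and \eqref{saac} gives $f_{n/2}=(\frac12)^{n/2}\binom{n}{n/2}(\square^{\otimes n/2})^\vee$.
\item \emph{$n$ even, other pairs.} Here we need $2\ell-n+1$ real idempotents, so each pair must be split. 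By \eqref{ae2}, $u:=e_2^{\otimes n}$ satisfies $uf_k=f_{n-k}u$; since $n$ is even, $u^2=1^{\otimes n}$. Hence $u$ commutes with $f_k+f_{n-k}$, and $\frac12(1^{\otimes n}\pm u)$ splits it into two orthogonal real idempotents. They are nonzero because left multiplication by $f_k$ sends $(f_k+f_{n-k})\frac12(1^{\otimes n}+\delta u)e^{(n)}_{\ell,\mathbb{H}}$ to $\frac12(f_k+\delta uf_{n-k})e^{(n)}_{\ell,\mathbb{H}}$. Here $uf_{n-k}=f_ku$ lies in $f_k\,\mathbb{S}^{(n)}_\ell\mathbb{H}_\mathbb{C}\,f_{n-k}$, while $f_ke^{(n)}_{\ell,\mathbb{H}}$ lies in $f_k\,\mathbb{S}^{(n)}_\ell\mathbb{H}_\mathbb{C}\,f_k$. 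Since $f_ke^{(n)}_{\ell,\mathbb{H}}\ne0$ for $n-\ell\le k\le\ell$ and these two Peirce components intersect trivially for $k\ne n-k$, the product is nonzero. The count is again $2\ell-n+1$.
\end{itemize}

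The main obstacle is rewriting $f_k+f_{n-k}$ in the closed forms \eqref{Thm1b} and \eqref{Thm1c}. The idea is to work inside the commutative algebra $\Sym^\bullet\mathbb{C}[e_1]$ with symmetrized products. For $n$ odd, peel off $k$ pairs $a\otimes a^c$, i.e.\ $\frac12\square$ by \eqref{saac}. This leaves $a^{n-2k}+(a^c)^{n-2k}$ in symmetric-power form, and I apply Waring's formula (Lemma~\ref{Waring}) with $x+y\leftrightarrow 1$ and $xy\leftrightarrow\frac12\square$. For $n$ even, apply it instead to pairs $x=a\otimes a$, $y=a^c\otimes a^c$, with $x+y=1\otimes1-\square$ by \eqref{s1} and $xy\leftrightarrow\frac14\square^{\otimes2}$. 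The delicate point is keeping track of the multinomial normalizations of $(\cdot)^\vee$, so that the binomial prefactors and the powers $2^{-k}$, $(-\frac12)^i$, $(-\frac14)^i$ come out exactly. I would first verify this normalization bookkeeping for small $n$ ($n=3,4$) before doing the general induction.
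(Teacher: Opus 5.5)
Your proposal is correct. For (a), for (b), for the splitting of $(f_k+f_{n-k})e^{(n)}_{\ell,\mathbb{H}}$ by $\frac12(1^{\otimes n}\pm e_2^{\otimes n})$ in (c), and for the Waring rewriting, it is essentially the paper's proof. (The paper treats $\ell=n$ separately via Lemma~\ref{isomCe}, since \eqref{emnH} is stated only for $\ell<n$; your uniform argument reduces to exactly that there.)

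The genuinely different step is showing in (c) that the two halves are nonzero.
\begin{itemize}
\item \emph{The paper's route.} Via \eqref{emnH}, Lemma~\ref{zeroCond} and Proposition~\ref{decompositionProp}(d), it reduces to showing that a certain $r\in\Sym^{2\ell-n}\mathbb{H}$ is not in $\Sym^{2\ell-n}\mathbb{H}\cdot\triangle_\mathbb{H}^{(2\ell-n)}$. It then applies the local--global criterion, Proposition~\ref{LocalGlobal}: it computes $r_e$ for $e=\alpha_2e_2+\alpha_3e_3$ and picks $\alpha_2$ off the roots of a nonconstant polynomial obtained from Waring's formula.
\item \emph{Your route.} You use a Peirce argument: left-multiply by $f_k$, note $f_ku=f_kuf_{n-k}$ and $f_kf_{n-k}=0$ for $k\neq n-k$, and invoke $f_ke^{(n)}_{\ell,\mathbb{H}}\neq0$ from (a). This is shorter and purely ring-theoretic. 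It needs only part (a) and the intertwining relation \eqref{ae2}, and avoids Proposition~\ref{LocalGlobal} entirely. The paper's computation does produce an explicit witness $e$ at which the element is visibly nonzero, but the theorem does not need that.
\end{itemize}
A small precision: it is $uf_{n-k}e^{(n)}_{\ell,\mathbb{H}}$, not $uf_{n-k}$, that lies in $f_k\,\mathbb{S}^{(n)}_\ell\mathbb{H}_\mathbb{C}\,f_{n-k}$.

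Finally, the normalization bookkeeping you flag as the main obstacle needs no induction or small-case checks. The product $(x_1,x_2)\mapsto(x_1\otimes x_2)^\vee$ is commutative and associative by \eqref{eq_sym_1a} and \eqref{eq_sym_1b}. So $f_k+f_{n-k}=\binom{n}{k}\big(((a\otimes a^c)^\vee)^{\otimes k}\otimes(a^{\otimes(n-2k)}+(a^c)^{\otimes(n-2k)})\big)^\vee$, and Waring's identity holds verbatim in that algebra. Using $(a\otimes a^c)^\vee=\frac12\square$ and $(a^{\otimes2}\otimes(a^c)^{\otimes2})^\vee=\frac14(\square\otimes\square)^\vee$, this produces exactly the factors $2^{-k}$, $(-\frac12)^i$ and $(-\frac14)^i$. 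This is how the paper does it.
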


\begin{proof}
Suppose first that $\ell=n$. In this case it follows from Lemma~\ref{isomCe} and Lemma~\ref{sCe}
that the set
\[
\left\{\binom{n}{k}\big(a^{\otimes k}\otimes(a^c)^{\otimes(n-k)}\big)^\vee e^{(n)}_{\ell,\mathbb{H}}\mid
k=0,\ldots,n\right\}
\]
is a complete set of primitive orthogonal idempotents in 
$\Sym^n\mathbb{C}[e_1]\cdot e_{n,\mathbb{H}}^{(n)}$, which is contained in
$\Sym^n\mathbb{H}_\mathbb{C}\cdot e_{n,\mathbb{H}}^{(n)}=T_n\mathbb{H}_\mathbb{C}$.
By Proposition~\ref{matrixProp}, $T_n\mathbb{H}_\mathbb{C}\cong M_{n+1}(\mathbb{C})$. Hence, by Example~\ref{matrix}, 
the above set is also a complete set of primitive orthogonal idempotents in $T_n\mathbb{H}_\mathbb{C}:=\mathbb{S}^{(n)}_n\mathbb{H}_\mathbb{C}$, 
since the number of its elements is $n+1$.

Assume then that $n\ge2$ and $\ell<n$.

By \eqref{emnH},
\begin{equation*}
e_{\ell,\mathbb{H}}^{(n)}=\frac{2\ell-n+1}{\ell+1}\binom{n}{\ell}\cdot
\big(\triangle_\mathbb{H}^{\otimes(n-\ell)}\otimes e_{2\ell-n,\mathbb{H}}^{(2\ell-n)}\big)^\vee.
\end{equation*}
Let $k$ be an integer between $0$ and $n$. Then,
as $\big(a^{\otimes k}\otimes(a^c)^{\otimes(n-k)}\big)^\vee\in\Sym^n\mathbb{H}_\mathbb{C}$,
\begin{equation}
\label{summ}
\begin{aligned}
&\binom{n}{k}\big(a^{\otimes k}\otimes(a^c)^{\otimes(n-k)}\big)^\vee
\big(\triangle_\mathbb{H}^{\otimes(n-\ell)}\otimes e_{2\ell-n,\mathbb{H}}^{(2\ell-n)}\big)^\vee\\
&\stackrel{\eqref{eq_sym_1c}}{=}
\binom{n}{k}\Big(\big(a^{\otimes k}\otimes(a^c)^{\otimes(n-k)}\big)^\vee
\big(\triangle_\mathbb{H}^{\otimes(n-\ell)}\otimes e_{2\ell-n,\mathbb{H}}^{(2\ell-n)}\big)\Big)^\vee\\
&\stackrel{\eqref{akacnk}}{=}\sum_{\mathbf{i}\in I_k}
\Big(\big((a_{i_1}\otimes a_{i_2})^\vee\otimes\cdots\otimes 
(a_{i_{2(n-\ell)-1}}\otimes a_{i_{2(n-\ell)}})^\vee\otimes
a_{i_{2(n-\ell)+1}}\otimes\cdots\otimes a_{i_n}\big)\cdot\\
&\qquad\qquad\quad
\big(\triangle_\mathbb{H}^{\otimes(n-\ell)}\otimes e_{2\ell-n,\mathbb{H}}^{(2\ell-n)}\big)\Big)^\vee,
\end{aligned}
\end{equation}
since
\begin{align*}
&(a_{i_1}\otimes a_{i_2})^\vee\otimes\cdots\otimes 
(a_{i_{2(n-\ell)-1}}\otimes a_{i_{2(n-\ell)}})^\vee\otimes
a_{i_{i_{2(n-\ell)+1}}}\otimes\cdots\otimes a_{i_n}\\
&=\frac{1}{2^{n-\ell}}\sum_{\sigma_1\in\{{\rm id},(1\ 2)\}}\cdots
\sum_{\sigma_{n-\ell}\in\{{\rm id},(2(n-\ell)-1\ 2(n-\ell))\}}
(\sigma_1\cdots\sigma_{n-\ell})\big(a_{i_1}\otimes\cdots\otimes a_{i_n}\big),
\end{align*}
where, for $i\ne j$ between $1$ and $n$, $(i\ j)$ is the transposition in $\mathfrak{S}_n$ that swaps
$i$ and $j$ while leaving all other $n-2$ elements in the set $\{1,\ldots,n\}$ fixed. 
By \eqref{aasH}, \[(a\otimes a)\triangle_\mathbb{H}=0=(a^c\otimes a^c)\triangle_\mathbb{H}.\]
Also, by \eqref{saac} and \eqref{stH},
$2(a\otimes a^c)^\vee\triangle_\mathbb{H}=\triangle_\mathbb{H}$.
Therefore, only summands of the form
\begin{equation*}
(a\otimes a^c)^\vee\otimes\cdots\otimes 
(a\otimes a^c)^\vee\otimes
a_{i_{2(n-\ell)+1}}\otimes\cdots\otimes a_{i_n}
\end{equation*}
do not necessarily annihilate 
$\triangle_\mathbb{H}^{\otimes(n-\ell)}\otimes e_{2\ell-n,\mathbb{H}}^{(2\ell-n)}$.
Hence, if $k$ is a nonnegative integer such that $k<n-\ell$ or $n-k<n-\ell$, 
then each of the summands
\begin{equation*}
(a_{i_1}\otimes a_{i_2})^\vee\otimes\cdots\otimes 
(a_{i_{2(n-\ell)-1}}\otimes a_{i_{2(n-\ell)}})^\vee\otimes
a_{i_{2(n-\ell)+1}}\otimes\cdots\otimes a_{i_n}
\end{equation*}
annihilates $\triangle_\mathbb{H}^{\otimes(n-\ell)}\otimes e_{2\ell-n,\mathbb{H}}^{(2\ell-n)}$.
Thus, 
\begin{equation}
\label{aace0}
\begin{aligned}
&\binom{n}{k}\big(a^{\otimes k}\otimes(a^c)^{\otimes(n-k)}\big)^\vee e^{(n)}_{\ell,\mathbb{H}}=0\\
&\text{ for each }k\in\{0,\ldots,n-\ell-1\}\cup\{\ell+1,\ldots,n\}.
\end{aligned}
\end{equation}

Now let $k$ be an integer between $n-\ell$ and $\ell$. Then
\begin{equation*}
\begin{aligned}
&\binom{n}{k}\big(a^{\otimes k}\otimes(a^c)^{\otimes(n-k)}\big)^\vee e^{(n)}_{\ell,\mathbb{H}}\\
&\stackrel{\eqref{emnH}}{=}
\frac{2\ell-n+1}{\ell+1}\binom{n}{\ell}\binom{n}{k}\big(a^{\otimes k}\otimes(a^c)^{\otimes(n-k)}\big)^\vee
\big(\triangle_\mathbb{H}^{\otimes(n-\ell)}\otimes e_{2\ell-n,\mathbb{H}}^{(2\ell-n)}\big)^\vee\\
&\stackrel{\eqref{summ}}{=}
\frac{2\ell-n+1}{\ell+1}\binom{n}{\ell}\binom{2\ell-n}{k-n+\ell}
\Bigg(\big(2(a\otimes a^c)^\vee\big)^{\otimes(n-\ell)}\otimes
\big(a^{\otimes(k-n+\ell)}\otimes(a^c)^{\otimes(\ell-k)}\big)^\vee\cdot\\
&\qquad\qquad\qquad\qquad\qquad\qquad\qquad\qquad
\big(\triangle_\mathbb{H}^{\otimes(n-\ell)}\otimes e_{2\ell-n,\mathbb{H}}^{(2\ell-n)}\big)\Bigg)^\vee\\
&=
\frac{2\ell-n+1}{\ell+1}\binom{n}{\ell}\binom{2\ell-n}{k-n+\ell}
\Big(\triangle_\mathbb{H}^{\otimes(n-\ell)}\otimes
\big(a^{\otimes(k-n+\ell)}\otimes(a^c)^{\otimes(\ell-k)}\big)^\vee
e_{2\ell-n,\mathbb{H}}^{(2\ell-n)}\Big)^\vee.
\end{aligned}
\end{equation*}
By the case $\ell=n$, proved in the first paragraph of the proof, applied to $2\ell-n$ instead of $\ell$ and $n$,
the set
\begin{equation*}
\Big\{\binom{2\ell-n}{k-n+\ell}\big(a^{\otimes(k-n+\ell)}\otimes(a^c)^{\otimes(\ell-k)}\big)^\vee
e_{2\ell-n,\mathbb{H}}^{(2\ell-n)}
\mid k\in\{n-\ell,\ldots,\ell\}\Big\}
\end{equation*}
is a complete set of primitive orthogonal idempotents in $T_{2\ell-n}\mathbb{H}_\mathbb{C}$.
In particular, by Lemma~\ref{zeroCond}, applied to $\mathcal{C}=\mathbb{H}$, each element in the set \eqref{Thm1a} is nonzero.
Moreover, by Lemma~\ref{sCe}, the set \eqref{Thm1a}
consists of $2\ell-n+1$ orthogonal idempotents in $\mathbb{S}_\ell^{(n)}\mathbb{H}_\mathbb{C}$.
Also, by \eqref{aace0},
\begin{equation*}
\begin{aligned}
\sum_{k=n-\ell}^\ell
\binom{n}{k}\big(a^{\otimes k}\otimes(a^c)^{\otimes(n-k)}\big)^\vee e^{(n)}_{\ell,\mathbb{H}}
&=\sum_{k=0}^n
\binom{n}{k}\big(a^{\otimes k}\otimes(a^c)^{\otimes(n-k)}\big)^\vee e^{(n)}_{\ell,\mathbb{H}}\\
&=(a+a^c)^{\otimes n}\cdot e^{(n)}_{\ell,\mathbb{H}}
\stackrel{\eqref{aorth}}{=}e^{(n)}_{\ell,\mathbb{H}},
\end{aligned}
\end{equation*}
which is the unit element in the algebra $\mathbb{S}_\ell^{(n)}\mathbb{H}_\mathbb{C}$.
Thus, the set \eqref{Thm1a} is complete.
By Proposition~\ref{decompositionProp}(e), applied to $\mathcal{C}=\mathbb{H}$, 
$\mathbb{S}_\ell^{(n)}\mathbb{H}_\mathbb{C}\cong T_{2\ell-n}\mathbb{H}_\mathbb{C}$.
Hence, 
the orthogonal idempotents in the set \eqref{Thm1a} are primitive,
since their number is $2\ell-n+1$.
This completes the proof of (a).

(b): Suppose that $n$ is odd.
By Proposition~\ref{decompositionProp}(e), applied to $\mathcal{C}=\mathbb{H}$, and Proposition~\ref{matrixProp}(b), 
$\mathbb{S}_\ell^{(n)}\mathbb{H} \cong T_{2\ell-n}\mathbb{H} \cong M_{\frac{2\ell-n+1}{2}}(\mathbb{H})$,
the ring of $\frac{2\ell-n+1}{2}\times\frac{2\ell-n+1}{2}$ matrices over the division ring $\mathbb{H}$.
Hence, by Example~\ref{matrix}, 
the number of elements of a complete set of primitive orthogonal idempotents in $\mathbb{S}_\ell^{(n)}\mathbb{H}$
is $\ell-\frac{n-1}{2}$. Thus, the elements
\begin{equation}
\label{nodd}
\binom{n}{k}\Big(\big(a^{\otimes k}\otimes(a^c)^{\otimes(n-k)}\big)^\vee
+\big(a^{\otimes(n-k)}\otimes(a^c)^{\otimes k}\big)^\vee\Big)e_{\ell,\mathbb{H}}^{(n)},
\end{equation}
for $k\in\{n-\ell,\ldots,\frac{n-1}{2}\}$,
form a complete set of primitive orthogonal idempotents in $\mathbb{S}_\ell^{(n)}\mathbb{H}$, since both
$\binom{n}{k}\big(a^{\otimes k}\otimes(a^c)^{\otimes(n-k)}\big)^\vee e_{\ell,\mathbb{H}}^{(n)}$ and
$\binom{n}{k}\big(a^{\otimes(n-k)}\otimes(a^c)^{\otimes k}\big)^\vee e_{\ell,\mathbb{H}}^{(n)}$
are elements of the set \eqref{Thm1a} and they are complex conjugates,
so their sum lies in $\mathbb{S}_\ell^{(n)}\mathbb{H}$.

Let $k$ be an integer between $n-\ell$ and $\frac{n-1}{2}$. 
In particular, $n-k\ge\frac{n+1}{2}>k$ and $2k<n$.
We shall rewrite \eqref{nodd} as \eqref{Thm1b}.
First note that, by \eqref{eq_sym_1b}, the element \eqref{nodd} is equal to
\begin{equation}
\label{nodd2}
\binom{n}{k}\Big(\big((a\otimes a^c)^\vee\big)^{\otimes k}\otimes
\big(a^{\otimes(n-2k)}+(a^c)^{\otimes(n-2k)}\big)\Big)^\vee e_{\ell,\mathbb{H}}^{(n)}.
\end{equation}
By \eqref{saac}, $(a\otimes a^c)^\vee=\frac{1}{2}\square$.
Also, by \eqref{WaringEq}, applied to the symmetric tensor product 
$\Sym^{n_1}\mathbb{C}[e_1]\times\Sym^{n_2}\mathbb{C}[e_1]\rightarrow\Sym^{n_1+n_2}\mathbb{C}[e_1]$,
defined by $(x_1,x_2)\mapsto(x_1\otimes x_2)^\vee$,
and to $x=a$, $y=a^c$,
\begin{equation*}
\begin{aligned}
&a^{\otimes(n-2k)}+(a^c)^{\otimes(n-2k)}\\
&=\sum_{i=0}^{\left\lfloor \frac{n-2k-1}{2} \right\rfloor}
(-1)^i\frac{n-2k}{n-2k-i}\binom{n-2k-i}{i}
\Big((a+a^c)^{\otimes(n-2k-2i)}\otimes\big((a\otimes a^c)^\vee\big)^{\otimes i}\Big)^\vee\\
&\stackrel{\eqref{aorth},\eqref{saac}}{=}
\sum_{i=0}^{\frac{n-1}{2}-k}
\left(-\frac{1}{2}\right)^i
\frac{n-2k}{n-2k-i}
\binom{n-2k-i}{i}
\Big(1^{\otimes(n-2k-2i)}\otimes\square^{\otimes i}\Big)^\vee.
\end{aligned}
\end{equation*}
Putting the above equations in \eqref{nodd2} gives \eqref{Thm1b}.

(c):
Suppose now that $n$ is even.
By Proposition~\ref{decompositionProp}(e), applied to $\mathcal{C}=\mathbb{H}$, and Proposition~\ref{matrixProp}(a), 
$\mathbb{S}_\ell^{(n)}\mathbb{H}\cong T_{2\ell-n}\mathbb{H}\cong M_{2\ell-n+1}(\mathbb{R})$. 
Hence, by Example~\ref{matrix}, 
the number of elements of a complete set of primitive orthogonal idempotents in $\mathbb{S}_\ell^{(n)}\mathbb{H}$
is $2\ell-n+1$. 

As above, the element
$\binom{n}{\frac{n}{2}}\big(a^{\otimes\frac{n}{2}}\otimes(a^c)^{\otimes\frac{n}{2}}\big)^\vee\cdot e_{\ell,\mathbb{H}}^{(n)}$,
which equals
$\left(\frac{1}{2}\right)^{\frac{n}{2}}\binom{n}{\frac{n}{2}}
\big(\square^{\otimes\frac{n}{2}}\big)^\vee\cdot e_{\ell,\mathbb{H}}^{(n)}$
by \eqref{saac},
together with the elements \eqref{nodd},
for $k\in\{n-\ell,\ldots,\frac{n}{2}-1\}$,
if $\ell>\left\lceil \frac{n}{2} \right\rceil$,
form a complete set of orthogonal idempotents in $\mathbb{S}_\ell^{(n)}\mathbb{H}$.
We shall prove that each of the elements \eqref{nodd}, for
$k\in\{n-\ell,\ldots,\frac{n}{2}-1\}$,
if $\ell>\left\lceil \frac{n}{2} \right\rceil$,
is a sum of two nonzero orthogonal idempotents
in $\mathbb{S}_\ell^{(n)}\mathbb{H}$,
so there is a total of
$1+2\big(\left(\frac{n}{2}-1\right)-(n-\ell)+1\big)=2\ell-n+1$
nonzero orthogonal idempotents in $\mathbb{S}_\ell^{(n)}\mathbb{H}$
that sum to $e_{\ell,\mathbb{H}}^{(n)}$.
Thus, these elements form a complete set of primitive orthogonal idempotents in $\mathbb{S}_\ell^{(n)}\mathbb{H}$.

Assume that $\ell>\left\lceil \frac{n}{2} \right\rceil$
and let $k$ be an integer between $n-\ell$ and $\frac{n}{2}-1$. 
By \eqref{ae2}, $e_2^{\otimes n}$ commutes with the element \eqref{nodd} (for this fixed $k$).
Also $\frac{1}{2}\big(1^{\otimes n}+e_2^{\otimes n}\big)$ and $\frac{1}{2}\big(1^{\otimes n}-e_2^{\otimes n}\big)$
are two orthogonal idempotents that sum to $1^{\otimes n}$. Hence,
\begin{equation}
\label{e2orth}
\binom{n}{k}\Big(\big(a^{\otimes k}\otimes(a^c)^{\otimes(n-k)}\big)^\vee
+\big(a^{\otimes(n-k)}\otimes(a^c)^{\otimes k}\big)^\vee\Big)
\cdot\frac{1}{2}\big(1^{\otimes n}+\delta\cdot e_2^{\otimes n}\big)
\cdot e_{\ell,\mathbb{H}}^{(n)},
\end{equation}
for $\delta\in\{-1,1\}$, are two orthogonal idempotents that sum to \eqref{nodd}.
We show that each of them is not zero.

Indeed, by \eqref{emnH} and Lemma~\ref{triangleLemma},
\begin{equation*}
\frac{1}{2}\big(1^{\otimes n}+\delta\cdot e_2^{\otimes n}\big)\cdot e_{\ell,\mathbb{H}}^{(n)}
=
\frac{2\ell-n+1}{\ell+1}\binom{n}{\ell}\cdot
\Big(\triangle_\mathbb{H}^{\otimes(n-\ell)}\otimes
\frac{1}{2}\big(1^{\otimes(2\ell-n)}+\delta\cdot e_2^{\otimes(2\ell-n)}\big)\cdot e_{2\ell-n,\mathbb{H}}^{(2\ell-n)}\Big)^\vee.
\end{equation*}
Since $k\ge n-\ell$ and $n-k\ge\frac{n}{2}+1>n-\ell$, multiplying the above element by 
\begin{equation*}
\binom{n}{k}\Big(\big(a^{\otimes k}\otimes(a^c)^{\otimes(n-k)}\big)^\vee
+\big(a^{\otimes(n-k)}\otimes(a^c)^{\otimes k}\big)^\vee\Big),
\end{equation*}
we get, as in the proof of (a), the element
\begin{equation*}
\begin{aligned}
\frac{2\ell-n+1}{\ell+1}\binom{n}{\ell}&\binom{2\ell-n}{k-n+\ell}\cdot\Bigg(\triangle_\mathbb{H}^{\otimes(n-\ell)}\\
&\otimes\Big(\big(a^{\otimes(k-n+\ell)}\otimes(a^c)^{\otimes(\ell-k)}\big)^\vee+
\big(a^{\otimes(\ell-k)}\otimes(a^c)^{\otimes(k-n+\ell)}\big)^\vee\Big)\\
&\cdot\frac{1}{2}\big(1^{\otimes(2\ell-n)}+\delta\cdot e_2^{\otimes(2\ell-n)}\big)
\cdot e_{2\ell-n,\mathbb{H}}^{(2\ell-n)}\Bigg)^\vee.
\end{aligned}
\end{equation*}
By Lemma~\ref{zeroCond} and Proposition~\ref{decompositionProp}(d), applied to $\mathcal{C}=\mathbb{H}$
and $2\ell-n$ instead of $n$,
this element is zero if and only if
\begin{equation*}
\begin{aligned}
r&:=\Big(\big(a^{\otimes(k-n+\ell)}\otimes(a^c)^{\otimes(\ell-k)}\big)^\vee+
\big(a^{\otimes(\ell-k)}\otimes(a^c)^{\otimes(k-n+\ell)}\big)^\vee\Big)\\
&+\delta\cdot\Big(\big((ae_2)^{\otimes(k-n+\ell)}\otimes(a^ce_2)^{\otimes(\ell-k)}\big)^\vee+
\big((ae_2)^{\otimes(\ell-k)}\otimes(a^ce_2)^{\otimes(k-n+\ell)}\big)^\vee\Big)
\end{aligned}
\end{equation*}
is an element of $\Sym^{2\ell-n}\mathbb{H}\cdot\triangle_\mathbb{H}^{(2\ell-n)}$.
By Proposition~\ref{LocalGlobal}, applied to $\mathcal{C}=\mathbb{H}$,
this happens if and only if $r_e=0$ for each $e\in\Imag_1\mathbb{H}$. 
Let $e=\alpha_1e_1+\alpha_2e_2+\alpha_3e_3\in\Imag_1\mathbb{H}$ with $\alpha_1,\alpha_2,\alpha_3\in\mathbb{R}$
satisfying $\alpha_1^2+\alpha_2^2+\alpha_3^2=1$.
By Lemma~\ref{epart},
\begin{gather*}
    a_e=\frac{1}{2}(1+\alpha_1\sqrt{-1}e), \quad
    a^c_e=\frac{1}{2}(1-\alpha_1\sqrt{-1}e), \\
    (ae_2)_e=\frac{1}{2}(e_2+\sqrt{-1}e_3)_e=\frac{1}{2}(\alpha_2+\alpha_3\sqrt{-1})e, \\
    (a^ce_2)_e=\frac{1}{2}(e_2-\sqrt{-1}e_3)_e=\frac{1}{2}(\alpha_2-\alpha_3\sqrt{-1})e.
\end{gather*}
Set $\alpha:=\alpha_2+\alpha_3\sqrt{-1}$, so $\alpha^c=\alpha_2-\alpha_3\sqrt{-1}$.
Since $2k<n$, we have $k-n+\ell<\ell-k$.
In particular, for $\alpha_1=0$, we get

\begin{equation*}
\begin{aligned}
r_e&=\frac{1}{2^{2\ell-n}}\Big(2+\delta\cdot\big(\alpha^{k-n+\ell}(\alpha^c)^{\ell-k}
+\alpha^{\ell-k}(\alpha^c)^{k-n+\ell}\big)e^{2\ell-n}\Big)\\
&=\frac{1}{2^{2\ell-n}}\Big(2+\delta\cdot(\alpha\alpha^c)^{k-n+\ell}\big((\alpha^c)^{n-2k}+\alpha^{n-2k}\big)
(-1)^{\ell-\frac{n}{2}}\Big)\\
&=\frac{1}{2^{2\ell-n}}\Big(2+\delta\cdot\big((\alpha^c)^{n-2k}+\alpha^{n-2k}\big)(-1)^{\ell-\frac{n}{2}}\Big).
\end{aligned}
\end{equation*}

By \eqref{WaringEq}, applied to $x=\alpha$ and $y=\alpha^c$, we get, using 
$\alpha+\alpha^c=2\alpha_2$ and $\alpha\alpha^c=1$,
\begin{equation*}
\alpha^{n-2k}+(\alpha^c)^{n-2k}
=
\sum_{j=0}^{\frac{n}{2}-k}
(-1)^j
\frac{n-2k}{n-2k-j}
\binom{n-2k-j}{j}
(2\alpha_2)^{n-2k-2j}.
\end{equation*}
Set
\begin{equation*}
P(X):=2+\delta\cdot(-1)^{\ell-\frac{n}{2}}
\sum_{j=0}^{\frac{n}{2}-k}
(-1)^j\frac{n-2k}{n-2k-j}\binom{n-2k-j}{j}(2X)^{n-2k-2j}.
\end{equation*}
Since this polynomial is not constant (because $2k<n$) and 
has only finitely many roots, we can find $0<\alpha_2<1$ such that $P(\alpha_2)\ne0$.
Thus, if $\alpha_3=\sqrt{1-\alpha_2^2}$ and $e=\alpha_2e_2+\alpha_3e_3$, we get $r_e\ne0$.
This proves that the element \eqref{e2orth} is not zero.

We shall rewrite \eqref{e2orth} as \eqref{Thm1c}.
First note that, by \eqref{eq_sym_1b}, the element \eqref{e2orth} is equal to
\begin{equation}
\label{neven2}
\binom{n}{k}\Big(\big((a\otimes a^c)^\vee\big)^{\otimes k}\otimes
\big(a^{\otimes(n-2k)}+(a^c)^{\otimes(n-2k)}\big)\Big)^\vee 
\cdot\frac{1}{2}\big(1^{\otimes n}+\delta\cdot e_2^{\otimes n}\big)\cdot e_{\ell,\mathbb{H}}^{(n)}.
\end{equation}
By \eqref{saac}, $(a\otimes a^c)^\vee=\frac{1}{2}\square$
and, by \eqref{s1}, $a\otimes a+a^c\otimes a^c=1\otimes1-\square$.
Also, by \eqref{WaringEq}, applied to the symmetric tensor product 
and to $x=a^{\otimes2}$, $y=(a^c)^{\otimes2}$,
\begin{equation*}
\begin{aligned}
&a^{\otimes(n-2k)}+(a^c)^{\otimes(n-2k)}=\sum_{i=0}^{\left\lfloor \frac{n}{4}-\frac{k}{2} \right\rfloor}\\
&(-1)^i
\frac{\frac{n}{2}-k}{\frac{n}{2}-k-i}
\binom{\frac{n}{2}-k-i}{i}
\Big(\big(a^{\otimes2}+(a^c)^{\otimes2}\big)^{\otimes(\frac{n}{2}-k-2i)}
\otimes\big(\big(a^{\otimes2}\otimes(a^c)^{\otimes2}\big)^\vee\big)^{\otimes i}\Big)^\vee\\
&\stackrel{\eqref{s1},\eqref{saac}}{=}
\sum_{i=0}^{\left\lfloor \frac{n}{4}-\frac{k}{2} \right\rfloor}
\left(-\frac{1}{4}\right)^i
\frac{\frac{n}{2}-k}{\frac{n}{2}-k-i}
\binom{\frac{n}{2}-k-i}{i}
\Big((1\otimes1-\square)^{\otimes(\frac{n}{2}-k-2i)}\otimes\square^{\otimes 2i}\Big)^\vee.
\end{aligned}
\end{equation*}
Putting the above equations in \eqref{neven2} gives \eqref{Thm1c}.
\end{proof}

By Proposition~\ref{decompositionProp}(f), applied to $\mathcal{C}=\mathbb{H}$,
$\Sym^n\mathbb{H}_F=\bigoplus_{\ell=\left\lceil \frac{n}{2} \right\rceil}^n\mathbb{S}_\ell^{(n)}\mathbb{H}_F$, for $F=\mathbb{R}$ or $F=\mathbb{C}$.
Note that $\sum_{\ell=\left\lceil \frac{n}{2} \right\rceil}^n(2\ell-n+1)$ is equal to
\begin{equation*}
\begin{cases}
\sum_{\ell=\frac{n}{2}}^n(2\ell-n+1)=\left(\frac{n}{2}+1\right)^2=\frac{1}{4}(n+2)^2
&\text{if }n\text{ is even}\\
\sum_{\ell=\frac{n+1}{2}}^n(2\ell-n+1)=\frac{n+1}{2}\cdot\left(\frac{n+1}{2}+1\right)=\frac{1}{4}(n+1)(n+3)
&\text{if }n\text{ is odd}.
\end{cases}
\end{equation*}
Hence, by Corollary~\ref{completeCorollary}, the following result is a corollary of Theorem~\ref{Theorem1}.

\begin{corollary}
\label{Corollary2}
\begin{enumerate}[label={\rm(\alph*)},leftmargin=*]
\item
The set
\[
\left\{
\binom{n}{k}\bigl(a^{\otimes k}\otimes(a^c)^{\otimes(n-k)}\bigr)^\vee e^{(n)}_{\ell,\mathbb{H}}
\;\middle|\;
\begin{aligned}
&k\in\{n-\ell,\ldots,\ell\},\\
&\ell\in\left\{\left\lceil \frac{n}{2} \right\rceil,\ldots,n\right\}
\end{aligned}
\right\}
\]
is a complete set of $\frac{1}{4}(n+1)(n+3)$ (resp.\ $\frac{1}{4}(n+2)^2$)
primitive orthogonal idempotents in $\Sym^n\mathbb{H}\otimes_\mathbb{R}\mathbb{C}$
if $n$ is odd (resp.\ even).

\item
If $n$ is odd, then
\begin{align*}
\binom{n}{k}\frac{n-2k}{2^k}
\sum_{i=0}^{\frac{n-1}{2}-k}
\left(-\frac{1}{2}\right)^i
\frac{\binom{n-2k-i}{i}}{n-2k-i}
\big(1^{\otimes(n-2k-2i)}\otimes\square^{\otimes(k+i)}\big)^\vee\cdot e^{(n)}_{\ell,\mathbb{H}}\,,
\end{align*}
$k\in\{n-\ell,\ldots,\frac{n-1}{2}\}$, $\ell\in\left\{\frac{n+1}{2},\ldots,n\right\}$,
form a complete set of $\frac{1}{8}(n+1)(n+3)$ primitive orthogonal idempotents in $\Sym^n\mathbb{H}$.

\item
If $n$ is even, then
\[
\left(\frac{1}{2}\right)^{\frac{n}{2}}\binom{n}{\frac{n}{2}}
\big(\square^{\otimes\frac{n}{2}}\big)^\vee\cdot e_{\frac{n}{2},\mathbb{H}}^{(n)}
\]
and
\[
\left(\frac{1}{2}\right)^{\frac{n}{2}}\binom{n}{\frac{n}{2}}
\big(\square^{\otimes\frac{n}{2}}\big)^\vee\cdot e_{\ell,\mathbb{H}}^{(n)}
\]
together with
\begin{align*}
\binom{n}{k}\frac{\frac{n}{2}-k}{2^k}
\sum_{i=0}^{\frac{n}{4}-\lfloor\frac{k}{2}\rfloor}
\left(-\frac{1}{4}\right)^i
\frac{\binom{\frac{n}{2}-k-i}{i}}{\frac{n}{2}-k-i}
&\big(\square^{\otimes(k+2i)}\otimes(1\otimes1-\square)^{\otimes(\frac{n}{2}-k-2i)}\big)^\vee\\
&\cdot\frac{1}{2}\big(1^{\otimes n}+\delta\cdot e_2^{\otimes n}\big)\cdot e^{(n)}_{\ell,\mathbb{H}}\,,
\end{align*}
$k\in\{n-\ell,\ldots,\frac{n}{2}-1\}$, $\delta\in\{-1,1\}$, $\ell\in\left\{\frac{n}{2}+1,\ldots,n\right\}$,
form a complete set of $\frac{1}{4}(n+2)^2$ primitive orthogonal idempotents in $\Sym^n\mathbb{H}$.
\end{enumerate}
\end{corollary}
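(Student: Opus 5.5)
The plan is to obtain Corollary~\ref{Corollary2} directly from Theorem~\ref{Theorem1} via Corollary~\ref{completeCorollary}, with $R=\Sym^n\mathbb{H}_F$ for $F=\mathbb{C}$ (part (a)) or $F=\mathbb{R}$ (parts (b), (c)).

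\textbf{Step 1: the central idempotents.} By Proposition~\ref{decompositionProp}(b) applied to $\mathcal{C}=\mathbb{H}$, the elements $e^{(n)}_{\ell,\mathbb{H}}$, $\ell=\lceil\frac n2\rceil,\ldots,n$, form a complete set of orthogonal central idempotents in $\Sym^n\mathbb{H}_F$. Moreover, $\Sym^n\mathbb{H}$ is associative, since it is a subalgebra of the associative algebra $\mathbb{H}^{\otimes n}$. Hence the hypotheses of Corollary~\ref{completeCorollary} hold. By \eqref{Slambda}, the ring $R e^{(n)}_{\ell,\mathbb{H}}$ is exactly $\mathbb{S}^{(n)}_\ell\mathbb{H}_F$. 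For $F=\mathbb{C}$ this is $\mathbb{S}^{(n)}_\ell\mathbb{H}\otimes_\mathbb{R}\mathbb{C}$, using the identification of $\Sym^n\mathbb{H}_\mathbb{C}$ with $\Sym^n\mathbb{H}\otimes_\mathbb{R}\mathbb{C}$.

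\textbf{Step 2: insert the components from Theorem~\ref{Theorem1}.} For each $\ell$:

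\begin{itemize}
\item Part (a) of Theorem~\ref{Theorem1} gives a complete set of primitive orthogonal idempotents in $\mathbb{S}^{(n)}_\ell\mathbb{H}_\mathbb{C}$. Taking the union over $\ell$ and applying Corollary~\ref{completeCorollary} yields (a).
\item For $n$ odd, part (b) of the theorem does the same for $\mathbb{S}^{(n)}_\ell\mathbb{H}$, and the union gives (b).
\item For $n$ even, the component $\ell=\frac n2$ contributes only the single element $(\frac12)^{\frac n2}\binom{n}{n/2}(\square^{\otimes n/2})^\vee e^{(n)}_{\frac n2,\mathbb{H}}$. Indeed, the range $k\in\{n-\ell,\ldots,\frac n2-1\}$ is empty there, and $\mathbb{S}^{(n)}_{n/2}\mathbb{H}\cong T_0\mathbb{H}=\mathbb{R}$. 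Each $\ell>\frac n2$ contributes that element together with the elements \eqref{Thm1c}. This matches the listing in (c).
\end{itemize}

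The summation bound in (c) needs a small check. The corollary writes $\frac n4-\lfloor\frac k2\rfloor$ where the theorem has $\lfloor\frac n4-\frac k2\rfloor$. I will verify that this causes no discrepancy: the extra terms, where they occur, have $\binom{\frac n2-k-i}{i}=0$ because $i>\frac n2-k-i$. If instead the exponent $\frac n2-k-2i$ becomes negative, I will read the corollary's bound as the theorem's.

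\textbf{Step 3: counting.} Theorem~\ref{Theorem1} gives $2\ell-n+1$ idempotents per component (resp.\ $\frac{2\ell-n+1}{2}$ in the real odd case). Summing with the arithmetic-series computation stated just before the corollary gives:
\begin{itemize}
\item $\frac14(n+2)^2$ in (a) for $n$ even and $\frac14(n+1)(n+3)$ in (a) for $n$ odd;
\item $\sum_{\ell=(n+1)/2}^n\frac{2\ell-n+1}{2}=\frac18(n+1)(n+3)$ in (b);
\item $\frac14(n+2)^2$ in (c).
\end{itemize}

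The only real obstacle is bookkeeping: matching the index ranges and the boundary case $\ell=\frac n2$ in part (c). Everything substantive is already contained in Theorem~\ref{Theorem1} and Corollary~\ref{completeCorollary}.
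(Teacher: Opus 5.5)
Your proposal is correct and follows the paper's own argument: apply Corollary~\ref{completeCorollary} to the central idempotents $e^{(n)}_{\ell,\mathbb{H}}$ of Proposition~\ref{decompositionProp}(b), insert the componentwise sets from Theorem~\ref{Theorem1}, and count with the arithmetic-series sums. On your remark about the bound in (c): the corollary's $\frac{n}{4}-\lfloor\frac{k}{2}\rfloor$ gives a different range of $i$ than $\lfloor\frac{n}{4}-\frac{k}{2}\rfloor$ only when $4\mid n$ and $k$ is odd, and in that case the extra term always has exponent $\frac{n}{2}-k-2i=-1$ (and a zero denominator when $k=\frac{n}{2}-1$), so the vanishing-binomial argument does not settle it and your fallback of reading the bound as the theorem's is the one that applies.
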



\section[\texorpdfstring{An associative subalgebra of $\Sym^n\mathbb{O}$}
                          {An associative subalgebra of SymnO}]
        {An associative subalgebra of $\Sym^n\mathbb{O}$}
\label{IV}

In this section we consider the associative subalgebra 
$\Sym^n\mathbb{H}\cdot Z(\Sym^n\mathbb{O})$ of $\Sym^n\mathbb{O}$,
generated by $\Sym^n\mathbb{H}$ and the center $Z(\Sym^n\mathbb{O})$ of $\Sym^n\mathbb{O}$.
By Proposition~\ref{decompositionProp}(e),(f), applied to $\mathcal{C}=\mathbb{O}$,
$Z(\Sym^n\mathbb{O})=\bigoplus_{m=\left\lceil \frac{n}{2} \right\rceil}^n Z(\mathbb{S}^{(n)}_m\mathbb{O})
=\bigoplus_{m=\left\lceil \frac{n}{2} \right\rceil}^n\mathbb{R}\cdot e^{(n)}_{m,\mathbb{O}}$.
Thus,
\begin{equation*}
\Sym^n\mathbb{H}\cdot Z(\Sym^n\mathbb{O})=\bigoplus_{m=\left\lceil \frac{n}{2} \right\rceil}^n\Sym^n\mathbb{H}\cdot e_{m,\mathbb{O}}^{(n)}.
\end{equation*}
We first find a complete set of primitive orthogonal idempotents in $\Sym^n\mathbb{H}_\mathbb{C}\cdot e_{m,\mathbb{O}}^{(n)}$
and $\Sym^n\mathbb{H}\cdot e_{m,\mathbb{O}}^{(n)}$
for each integer $m$ between $\left\lceil \frac{n}{2} \right\rceil$ and $n$.
Then we get as a corollary complete sets of primitive orthogonal idempotents in 
$\Sym^n\mathbb{H}_\mathbb{C}\cdot Z(\Sym^n\mathbb{O})$ and in $\Sym^n\mathbb{H}\cdot Z(\Sym^n\mathbb{O})$.

\begin{theorem}
\label{Theorem3}
Let $m$ be an integer between $\left\lceil \frac{n}{2} \right\rceil$ and $n$.
\begin{enumerate}[label={\rm(\alph*)},leftmargin=*]
\item
The set
\[
\left\{
\binom{n}{k}\bigl(a^{\otimes k}\otimes(a^c)^{\otimes(n-k)}\bigr)^\vee 
e^{(n)}_{\ell,\mathbb{H}}e^{(n)}_{m,\mathbb{O}}
\;\middle|\;
\begin{aligned}
&k\in\{n-\ell,\ldots,\ell\},\\
&\ell\in\left\{\left\lceil \frac{n}{2} \right\rceil,\ldots,m\right\}
\end{aligned}
\right\}
\]
is a complete set of $\frac{1}{4}(2m-n+1)(2m-n+3)$ (resp.\ $\frac{1}{4}(2m-n+2)^2$)
primitive orthogonal idempotents in $(\Sym^n\mathbb{H}\otimes_\mathbb{R}\mathbb{C})\cdot e^{(n)}_{m,\mathbb{O}}$
if $n$ is odd (resp.\ even).

\item
If $n$ is odd, then
\begin{align*}
\binom{n}{k}\frac{n-2k}{2^k}
\sum_{i=0}^{\frac{n-1}{2}-k}
\left(-\frac{1}{2}\right)^i
\frac{\binom{n-2k-i}{i}}{n-2k-i}
\big(1^{\otimes(n-2k-2i)}\otimes\square^{\otimes(k+i)}\big)^\vee\cdot e^{(n)}_{\ell,\mathbb{H}}e^{(n)}_{m,\mathbb{O}}\,,
\end{align*}
where $k$ varies in $\{n-\ell,\ldots,\frac{n-1}{2}\}$ and $\ell$ varies in $\left\{\frac{n+1}{2},\ldots,m\right\}$,
form a complete set of $\frac{1}{8}(2m-n+1)(2m-n+3)$
primitive orthogonal idempotents in $\Sym^n\mathbb{H}\cdot e^{(n)}_{m,\mathbb{O}}$.

\item
If $n$ is even, then
\[
\left(\frac{1}{2}\right)^{\frac{n}{2}}\binom{n}{\frac{n}{2}}
\big(\square^{\otimes\frac{n}{2}}\big)^\vee\cdot e_{\frac{n}{2},\mathbb{H}}^{(n)}e_{m,\mathbb{O}}^{(n)}
\]
and
\[
\left(\frac{1}{2}\right)^{\frac{n}{2}}\binom{n}{\frac{n}{2}}
\big(\square^{\otimes\frac{n}{2}}\big)^\vee\cdot e_{\ell,\mathbb{H}}^{(n)}e_{m,\mathbb{O}}^{(n)}
\]
together with
\begin{align*}
\binom{n}{k}\frac{\frac{n}{2}-k}{2^k}
\sum_{i=0}^{\lfloor\frac{n}{4}-\frac{k}{2}\rfloor}
\left(-\frac{1}{4}\right)^i
\frac{\binom{\frac{n}{2}-k-i}{i}}{\frac{n}{2}-k-i}
&\big(\square^{\otimes(k+2i)}\otimes(1\otimes1-\square)^{\otimes(\frac{n}{2}-k-2i)}\big)^\vee\\
&\cdot\frac{1}{2}\big(1^{\otimes n}+\delta\cdot e_2^{\otimes n}\big)\cdot e^{(n)}_{\ell,\mathbb{H}}e^{(n)}_{m,\mathbb{O}}\,,
\end{align*}
$k\in\{n-\ell,\ldots,\frac{n}{2}-1\}$, $\delta\in\{-1,1\}$, $\ell\in\left\{\frac{n}{2}+1,\ldots,m\right\}$,
if $m>\frac{n}{2}$,
form a complete set of $\frac{1}{4}(2m-n+2)^2$
primitive orthogonal idempotents in $\Sym^n\mathbb{H}\cdot e^{(n)}_{m,\mathbb{O}}$.
\end{enumerate}
\end{theorem}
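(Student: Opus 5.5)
The plan is to transport Theorem~\ref{Theorem1} through the isomorphism of Corollary~\ref{MainLemma}, and then assemble the simple summands using Corollary~\ref{completeCorollary}.

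\emph{Step 1: decompose $\Sym^n\mathbb{H}\cdot e^{(n)}_{m,\mathbb{O}}$.} Since $e^{(n)}_{m,\mathbb{O}}$ commutes and associates with all of $\Sym^n\mathbb{O}$, the map $\varphi\colon r\mapsto r e^{(n)}_{m,\mathbb{O}}$ from $\Sym^n\mathbb{H}$ is a surjective algebra homomorphism. By Proposition~\ref{isom} its kernel is $\bigoplus_{\ell>m}\mathbb{S}_\ell^{(n)}\mathbb{H}$. Hence
\[
\Sym^n\mathbb{H}\cdot e^{(n)}_{m,\mathbb{O}}=\bigoplus_{\ell=\lceil n/2\rceil}^m \Sym^n\mathbb{H}\cdot e^{(n)}_{\ell,\mathbb{H}}e^{(n)}_{m,\mathbb{O}}.
\]
The elements $e^{(n)}_{\ell,\mathbb{H}}e^{(n)}_{m,\mathbb{O}}$, for $\ell\le m$, are the images under $\varphi$ of a complete set of orthogonal central idempotents restricted to the quotient. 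So they are nonzero, orthogonal and central in $\Sym^n\mathbb{H}\cdot e^{(n)}_{m,\mathbb{O}}$. They sum to $e^{(n)}_{m,\mathbb{O}}$, because $\sum_\ell e^{(n)}_{\ell,\mathbb{H}}=1^{\otimes n}$ and $e^{(n)}_{\ell,\mathbb{H}}e^{(n)}_{m,\mathbb{O}}=0$ for $\ell>m$ by Corollary~\ref{MainLemma}.

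\emph{Step 2: transport the idempotents.} Fix $\ell\le m$. Corollary~\ref{MainLemma} gives an $\mathbb{R}$-algebra isomorphism $\mathbb{S}_\ell^{(n)}\mathbb{H}\to\Sym^n\mathbb{H}\cdot e^{(n)}_{\ell,\mathbb{H}}e^{(n)}_{m,\mathbb{O}}$, namely $x\mapsto x e^{(n)}_{m,\mathbb{O}}$, which carries unit to unit. Tensoring with $\mathbb{C}$ gives the complex version; $\varphi$ extends $\mathbb{C}$-linearly and its kernel is just the complexified kernel. An algebra isomorphism preserves idempotency, orthogonality, completeness and primitivity, since primitivity is defined intrinsically in Definition~\ref{primitive}. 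Therefore the image of each complete set in Theorem~\ref{Theorem1}(a), (b), (c) is a complete set of primitive orthogonal idempotents in the corresponding summand.

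The image of an element $x e^{(n)}_{\ell,\mathbb{H}}$ is $x e^{(n)}_{\ell,\mathbb{H}}e^{(n)}_{m,\mathbb{O}}$. To write it in this form I use associativity: all factors lie in $\Sym^n\mathbb{H}$ or are the central, associating element $e^{(n)}_{m,\mathbb{O}}$. This reproduces exactly the listed elements.

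For (c), the case $\ell=n/2$ contributes only the single element $\bigl(\tfrac12\bigr)^{n/2}\binom{n}{n/2}(\square^{\otimes n/2})^\vee e_{n/2,\mathbb{H}}^{(n)}e^{(n)}_{m,\mathbb{O}}$. This is because $\mathbb{S}^{(n)}_{n/2}\mathbb{H}\cong T_0\mathbb{H}=\mathbb{R}$, so that element equals $e^{(n)}_{n/2,\mathbb{H}}e^{(n)}_{m,\mathbb{O}}$. For $\ell>n/2$ the summand contributes the $\square$-element together with the $\delta$-elements. One point needs care: the statement of Theorem~\ref{Theorem1}(c) phrases the condition as $\ell>\lceil n/2\rceil$, and I read this as $\ell>n/2$, since $n$ is even.

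\emph{Step 3: assemble and count.} Apply Corollary~\ref{completeCorollary} with $R=\Sym^n\mathbb{H}\cdot e^{(n)}_{m,\mathbb{O}}$ (respectively its complexification) and the central idempotents $e^{(n)}_{\ell,\mathbb{H}}e^{(n)}_{m,\mathbb{O}}$. This shows that the union over $\ell$ is complete, primitive and orthogonal in $R$. The cardinalities are $\sum_{\ell}(2\ell-n+1)$, respectively $\sum_\ell\frac{2\ell-n+1}{2}$ for odd $n$, taken over $\ell$ from $\lceil n/2\rceil$ to $m$. These sums evaluate to the stated formulas, as already computed in the introduction.

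The main obstacle I anticipate is justifying that the transported elements are literally the displayed products. This requires care with nonassociativity in $\Sym^n\mathbb{O}$, and it is handled by the associating property of $e^{(n)}_{m,\mathbb{O}}$ from Proposition~\ref{decompositionProp}(b). The rest is bookkeeping.
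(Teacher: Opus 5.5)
Your proposal is correct and follows the paper's proof. Both decompose $\Sym^n\mathbb{H}\cdot e^{(n)}_{m,\mathbb{O}}$ using Corollary~\ref{MainLemma} (equivalently Proposition~\ref{isom}), transport the complete sets of Theorem~\ref{Theorem1} through the isomorphisms $\mathbb{S}^{(n)}_\ell\mathbb{H}\cong\Sym^n\mathbb{H}\cdot e^{(n)}_{\ell,\mathbb{H}}e^{(n)}_{m,\mathbb{O}}$ for $\ell\le m$, and assemble them with Corollary~\ref{completeCorollary}; your remarks on the complexified kernel and on associativity with $e^{(n)}_{m,\mathbb{O}}$ spell out details the paper leaves implicit.
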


\begin{proof}
Let $m$ be an integer between $\left\lceil \frac{n}{2} \right\rceil$ and $n$.
By Proposition~\ref{decompositionProp}(f), applied to $\mathcal{C}=\mathbb{H}$,
\begin{equation*}
\Sym^n\mathbb{H}=\bigoplus_{\ell=\left\lceil \frac{n}{2} \right\rceil}^n\Sym^n\mathbb{H}\cdot e_{\ell,\mathbb{H}}^{(n)}.
\end{equation*}
Hence,
\begin{equation*}
\Sym^n\mathbb{H}\cdot e_{m,\mathbb{O}}^{(n)}=\bigoplus_{\ell=\left\lceil \frac{n}{2} \right\rceil}^n
\Sym^n\mathbb{H}\cdot e_{\ell,\mathbb{H}}^{(n)}e_{m,\mathbb{O}}^{(n)}.
\end{equation*}
By Corollary~\ref{MainLemma}
\begin{equation*}
\Sym^n\mathbb{H}\cdot e_{m,\mathbb{O}}^{(n)}=\bigoplus_{\ell=\left\lceil \frac{n}{2} \right\rceil}^m
\Sym^n\mathbb{H}\cdot e_{\ell,\mathbb{H}}^{(n)}e_{m,\mathbb{O}}^{(n)}.
\end{equation*}
Moreover, for each $\ell$ between $\left\lceil \frac{n}{2} \right\rceil$ and $m$,
the map $r\cdot e_{\ell,\mathbb{H}}^{(n)}\mapsto r\cdot e_{\ell,\mathbb{H}}^{(n)}e_{m,\mathbb{O}}^{(n)}$,
for $r\in\Sym^n\mathbb{H}$, gives an isomorphism
$\Sym^n\mathbb{H}\cdot e_{\ell,\mathbb{H}}^{(n)}\cong\Sym^n\mathbb{H}\cdot e_{\ell,\mathbb{H}}^{(n)}e_{m,\mathbb{O}}^{(n)}$
of $\mathbb{R}$-algebras.

Since the number of elements in a complete set of primitive orthogonal idempotents in 
$\Sym^n\mathbb{H}_\mathbb{C}\cdot e_{\ell,\mathbb{H}}^{(n)}$ is $2\ell-n+1$, for each $\ell$ between $\left\lceil \frac{n}{2} \right\rceil$ and $m$,
the number of elements in a complete set of primitive orthogonal idempotents in 
$\Sym^n\mathbb{H}_\mathbb{C}\cdot e_{m,\mathbb{O}}^{(n)}$ is $\sum_{\ell=\left\lceil \frac{n}{2} \right\rceil}^m(2\ell-n+1)$.
Note that if $n$ is even, then
\begin{equation*}
\begin{aligned}
\sum_{\ell=\frac{n}{2}}^m(2\ell-n+1)&=
2\cdot\frac{m(m+1)}{2}-2\cdot\frac{\left(\frac{n}{2}-1\right)\frac{n}{2}}{2}
+\left(m-\frac{n}{2}+1\right)\cdot(1-n)\\
&=m^2+\frac{n^2}{4}-mn+2m-n+1=\frac{1}{4}(2m-n+2)^2,
\end{aligned}
\end{equation*}
and if $n$ is odd, then
\begin{equation*}
\begin{aligned}
\sum_{\ell=\frac{n+1}{2}}^m(2\ell-n+1)&=
2\cdot\frac{m(m+1)}{2}-2\cdot\frac{\frac{n-1}{2}\frac{n+1}{2}}{2}
+\left(m-\frac{n+1}{2}+1\right)\cdot(1-n)\\
&=m^2+\frac{n^2+3}{4}-mn+2m-n=\frac{1}{4}(2m-n+1)(2m-n+3).
\end{aligned}
\end{equation*}

Thus, by Corollary~\ref{completeCorollary}, the theorem is a result of Theorem~\ref{Theorem1}.
\end{proof}

By Proposition~\ref{decompositionProp}(b), applied to $\mathcal{C}=\mathbb{O}$, the set
$\big\{e_{m,\mathbb{O}}^{(n)}\mid m\in\{\left\lceil \frac{n}{2} \right\rceil,\ldots,n\}\big\}$ 
consists of orthogonal central idempotents of $\Sym^n\mathbb{O}$ that sum to $1^{\otimes n}$.
Note that if $n$ is odd, then
\begin{equation*}
\begin{aligned}
&\sum_{m=\frac{n+1}{2}}^n\frac{1}{4}(2m-n+1)(2m-n+3)=\sum_{i=1}^{\frac{n+1}{2}}i(i+1)\\
&=\frac{1}{3}\frac{n+1}{2}\left(\frac{n+1}{2}+1\right)\left(\frac{n+1}{2}+2\right)
=\frac{1}{24}(n+1)(n+3)(n+5)
\end{aligned}
\end{equation*}
and if $n$ is even, then
\begin{equation*}
\begin{aligned}
&\sum_{m=\frac{n}{2}}^n\frac{1}{4}(2m-n+2)^2=\sum_{i=1}^{\frac{n}{2}+1}i^2\\
&=\frac{1}{6}\left(\frac{n}{2}+1\right)\left(\frac{n}{2}+2\right)(n+3)
=\frac{1}{24}(n+2)(n+3)(n+4).
\end{aligned}
\end{equation*}
Hence, by Corollary~\ref{completeCorollary}, the following result is a corollary of Theorem~\ref{Theorem3}.

\begin{corollary}
\label{Corollary4}
\begin{enumerate}[label={\rm(\alph*)},leftmargin=*]
\item
The set
\[
\left\{
  \binom{n}{k}\bigl(a^{\otimes k}\otimes(a^c)^{\otimes(n-k)}\bigr)^\vee 
  e^{(n)}_{\ell,\mathbb{H}}e^{(n)}_{m,\mathbb{O}}
  \;\middle|\;
  \begin{aligned}
    &k\in\{n-\ell,\ldots,\ell\},\\
    &\ell\in\left\{\left\lceil \frac{n}{2} \right\rceil,\ldots,m\right\},\\
    &m\in\left\{\left\lceil \frac{n}{2} \right\rceil,\ldots,n\right\}
  \end{aligned}
\right\}
\]
is a complete set of $\frac{1}{24}(n+1)(n+3)(n+5)$ (resp.\ $\frac{1}{24}(n+2)(n+3)(n+4)$)
primitive orthogonal idempotents in 
$\big(\Sym^n\mathbb{H}\cdot Z(\Sym^n\mathbb{O})\big)\otimes_\mathbb{R}\mathbb{C}$
if $n$ is odd (resp.\ even).

\item
If $n$ is odd, then
\begin{align*}
\binom{n}{k}\frac{n-2k}{2^k}
\sum_{i=0}^{\frac{n-1}{2}-k}
\left(-\frac{1}{2}\right)^i
\frac{\binom{n-2k-i}{i}}{n-2k-i}
\big(1^{\otimes(n-2k-2i)}\otimes\square^{\otimes(k+i)}\big)^\vee\cdot 
e^{(n)}_{\ell,\mathbb{H}}e^{(n)}_{m,\mathbb{O}}
\end{align*}
for
\[
k\in\{n-\ell,\ldots,\tfrac{n-1}{2}\},\quad
\ell\in\{\tfrac{n+1}{2},\ldots,m\},\quad
m\in\{\tfrac{n+1}{2},\ldots,n\},
\]

form a complete set of $\frac{1}{48}(n+1)(n+3)(n+5)$
primitive orthogonal idempotents in
$\Sym^n\mathbb{H}\cdot Z(\Sym^n\mathbb{O})$.

\item
If $n$ is even, then
\[
\left(\frac{1}{2}\right)^{\frac{n}{2}}
\binom{n}{\frac{n}{2}}
\bigl(\square^{\otimes \frac{n}{2}}\bigr)^\vee
\cdot
e_{\frac{n}{2},\mathbb{H}}^{(n)} e_{m,\mathbb{O}}^{(n)},
\qquad
\text{for } m\in\left\{\frac{n}{2},\ldots,n\right\},
\]
and
\[
\left(\frac{1}{2}\right)^{\frac{n}{2}}
\binom{n}{\frac{n}{2}}
\bigl(\square^{\otimes \frac{n}{2}}\bigr)^\vee
\cdot
e_{\ell,\mathbb{H}}^{(n)} e_{m,\mathbb{O}}^{(n)}
\]
together with
\begin{align*}
\binom{n}{k}\frac{\frac{n}{2}-k}{2^k}
\sum_{i=0}^{\left\lfloor \frac{n}{4}-\frac{k}{2} \right\rfloor}
\left(-\frac{1}{4}\right)^i
\frac{\binom{\frac{n}{2}-k-i}{i}}{\frac{n}{2}-k-i}
\big(\square^{\otimes (k+2i)} \otimes (1\otimes1-\square)^{\otimes (\frac{n}{2}-k-2i)}\big)^\vee \\
\cdot \frac{1}{2}\big(1^{\otimes n}+\delta \cdot e_2^{\otimes n}\big)\cdot e_{\ell,\mathbb{H}}^{(n)} e_{m,\mathbb{O}}^{(n)},
\end{align*}
for
$k \in \{n-\ell,\ldots,\tfrac{n}{2}-1\}$, $\delta \in \{-1,1\}, \quad
\ell \in \left\{\tfrac{n}{2}+1,\ldots,m\right\}$ and $m \in \left\{\tfrac{n}{2}+1,\ldots,n\right\}$, form a complete set of $\frac{1}{24}(n+2)(n+3)(n+4)$ primitive orthogonal idempotents in $\Sym^n\mathbb{H} \cdot Z(\Sym^n\mathbb{O})$.
\end{enumerate}
\end{corollary}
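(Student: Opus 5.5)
The plan is to reduce Corollary~\ref{Corollary4} directly to Theorem~\ref{Theorem3} by means of Corollary~\ref{completeCorollary}. We apply the latter to the associative ring
\[
R=\Sym^n\mathbb{H}\cdot Z(\Sym^n\mathbb{O})=\bigoplus_{m=\left\lceil \frac{n}{2} \right\rceil}^n\Sym^n\mathbb{H}\cdot e_{m,\mathbb{O}}^{(n)}
\]
(resp.\ to $R\otimes_\mathbb{R}\mathbb{C}$), with central idempotents $e_m:=e_{m,\mathbb{O}}^{(n)}$.

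\textbf{Step 1: the $e_m$ satisfy the hypotheses of Corollary~\ref{completeCorollary}.} Each $e_m$ lies in $R$: take $1^{\otimes n}\in\Sym^n\mathbb{H}$ times $e_m$. By Proposition~\ref{decompositionProp}(b), applied to $\mathcal{C}=\mathbb{O}$, each $e_m$ commutes and associates with all of $\Sym^n\mathbb{O}\supseteq R$. Hence the $e_m$ are central idempotents of $R$, they are pairwise orthogonal, and they sum to $1^{\otimes n}$, which is the unit of $R$.

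The point requiring care is that $R$ is associative, since Section~\ref{I} works with associative rings. Let $x,y,z\in\Sym^n\mathbb{H}$. Because $\Sym^n\mathbb{H}$ is associative and $e_m,e_{m'}$ are central, associate with everything, and are orthogonal, we get
\[
(xe_m\cdot ye_{m'})\cdot ze_{m''}=\delta_{mm'}\delta_{m'm''}\,(xyz)\,e_m,
\]
and likewise for the other bracketing. So $R$ is associative.

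Moreover, $Re_m=\Sym^n\mathbb{H}\cdot e_m$. The same argument holds after tensoring with $\mathbb{C}$, where $(R\otimes_\mathbb{R}\mathbb{C})e_m=\Sym^n\mathbb{H}_\mathbb{C}\cdot e_m$.

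\textbf{Step 2: apply Theorem~\ref{Theorem3} to each summand.} For each $m$, Theorem~\ref{Theorem3}(a), (b), (c) gives a complete set of primitive orthogonal idempotents in $\Sym^n\mathbb{H}_\mathbb{C}\cdot e_m$, respectively in $\Sym^n\mathbb{H}\cdot e_m$ for $n$ odd or even. Corollary~\ref{completeCorollary} then says that the union over $m$ of these sets is a complete set of primitive orthogonal idempotents in $R\otimes_\mathbb{R}\mathbb{C}$, respectively in $R$. This union is exactly the family indexed by $(k,\ell,m)$ (and $\delta$) in the statement.

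For (c) there is a bookkeeping point. When $m=\frac{n}{2}$, only the element with $e_{\frac{n}{2},\mathbb{H}}^{(n)}e_{m,\mathbb{O}}^{(n)}$ occurs. For $m>\frac{n}{2}$, the elements with $\ell\in\{\frac{n}{2}+1,\ldots,m\}$ are added as well. This matches the listed index ranges.

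\textbf{Step 3: the counts.} Sum the sizes from Theorem~\ref{Theorem3} over $m$, using the sums computed just before the statement:
\[
\sum_{m}\tfrac14(2m-n+1)(2m-n+3)=\tfrac{1}{24}(n+1)(n+3)(n+5),\qquad \sum_m\tfrac14(2m-n+2)^2=\tfrac{1}{24}(n+2)(n+3)(n+4).
\]
For $n$ odd over $\mathbb{R}$, each summand is half of the complex one, which gives $\frac{1}{48}(n+1)(n+3)(n+5)$.

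The main obstacle is Step 1, namely verifying that $R$ is an honest associative ring with the $e_m$ central, so that Corollary~\ref{completeCorollary} applies inside the nonassociative $\Sym^n\mathbb{O}$. The bracket computation above is what handles this.
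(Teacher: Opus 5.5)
Your proposal is correct and follows the paper's own argument. The paper applies Corollary~\ref{completeCorollary} to the orthogonal central idempotents $e_{m,\mathbb{O}}^{(n)}$, uses Theorem~\ref{Theorem3} on each summand $\Sym^n\mathbb{H}\cdot e_{m,\mathbb{O}}^{(n)}$ (and its complexification), and then sums the counts over $m$ exactly as you do. Your explicit bracket computation showing that $\Sym^n\mathbb{H}\cdot Z(\Sym^n\mathbb{O})$ is associative spells out a point the paper only asserts.
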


\begin{remark}
\label{Qrem}
By \eqref{emn}, \eqref{beta}, \eqref{trianglen}, and \eqref{triangle}:
\[
    e_{\ell,\mathbb{H}}^{(n)}\in\Sym^n\mathbb{Q}[e_1,e_2,e_3]
\]
and
\[
    e_{m,\mathbb{O}}^{(n)}\in\Sym^n\mathbb{Q}[e_1,e_2,e_3,e_4,e_5,e_6,e_7].
\]
Also, by \eqref{a}, \eqref{ac}, $a,a^c\in\mathbb{Q}[\sqrt{-1}][e_1]$,
and by \eqref{square}, $\square\in\Sym^2\mathbb{Q}[e_1]$.

Hence, Theorem~\ref{Theorem1}, Corollary~\ref{Corollary2}, Theorem~\ref{Theorem3}, and Corollary~\ref{Corollary4}
can be formulated for $\mathbb{Q}[\sqrt{-1}]$, $\mathbb{Q}[e_1,e_2,e_3]$, and
$\mathbb{Q}[e_1,e_2,e_3,e_4,e_5,e_6,e_7]$ instead of $\mathbb{C}$, $\mathbb{H}$, and $\mathbb{O}$, respectively.
\end{remark}

\begin{remark}
\label{G2rem}
The algebra of octonions $\mathbb{O}$ has an automorphism group 
isomorphic to the exceptional Lie group $G_2$.
This group acts transitively on the set of all quaternion subalgebras.

Each automorphism extends naturally to an automorphism of the tensor power $\mathbb{O}^{\otimes n}$,
and then, by restriction, to an automorphism of $\Sym^n\mathbb{O}$.

Since
\[
\triangle_\mathbb{O} = \frac{1}{8}\sum_{i=0}^7 e_i \otimes e_i \in \Sym^2\mathbb{O}
\]
does not depend on the choice of basis $(e_i)_{i=0}^7$ of $\mathbb{O}$
(see \cite[paragraph before Lemma 2.1 and equation (2.9)]{Raz21}),
it is fixed by all automorphisms.

Thus, $\triangle_\mathbb{O}^{(n)} \in \Sym^n\mathbb{O}$ is also fixed by all automorphisms.

By construction,
\[
e^{(n)}_{m,\mathbb{O}} \in \mathbb{R}[\triangle_\mathbb{O}^{(n)}]
\]
for each integer $m$ between $\left\lceil \frac{n}{2} \right\rceil$ and $n$.

Hence every element of $Z(\Sym^n\mathbb{O})$ is fixed by all automorphisms.
\end{remark}

\begin{example}
\label{n2m2example}
By Theorem~\ref{Theorem3}(c), applied to $n=2$ and $m=2$, the set
\begin{equation*}
\begin{aligned}
\Big\{&\tau_0:=\square\cdot e_{1,\mathbb{H}}^{(2)}e_{2,\mathbb{O}}^{(2)},\ 
\tau_1:=\square\cdot e_{2,\mathbb{H}}^{(2)}e_{2,\mathbb{O}}^{(2)},\\
&\tau_2:=(1\otimes1-\square)\cdot\frac{1}{2}(1\otimes1+e_2\otimes e_2)\cdot e_{2,\mathbb{H}}^{(2)}e_{2,\mathbb{O}}^{(2)},\\
&\tau_3:=(1\otimes1-\square)\cdot\frac{1}{2}(1\otimes1-e_2\otimes e_2)\cdot e_{2,\mathbb{H}}^{(2)}e_{2,\mathbb{O}}^{(2)}\Big\}
\end{aligned}
\end{equation*}
is a complete set of primitive orthogonal idempotents in $\Sym^2\mathbb{H}\cdot e_{2,\mathbb{O}}^{(2)}$.

By \eqref{e21} and \eqref{e22}, applied to $\mathcal{C}=\mathbb{H}$,
$e_{1,\mathbb{H}}^{(2)}=\triangle_\mathbb{H}$ and $e_{2,\mathbb{H}}^{(2)}=1\otimes1-\triangle_\mathbb{H}$.
Also, by \eqref{stH}, $\square\triangle_\mathbb{H}=\triangle_\mathbb{H}$.
In particular, $(1\otimes1-\square)e_{2,\mathbb{H}}^{(2)}=1\otimes1-\square$. Hence,
since by \eqref{triangle}, for $\mathcal{C}=\mathbb{H}$, and \eqref{square},
\[
\triangle_\mathbb{H} = \frac{1}{4}\left(1\otimes1 + e_1\otimes e_1 + e_2\otimes e_2 + e_3\otimes e_3\right)
\quad\text{and}\quad
\square = \frac{1}{2}\left(1\otimes1 + e_1\otimes e_1\right),
\]
we get
\begin{equation*}
\tau_0
= \square \triangle_\mathbb{H} \cdot e_{2,\mathbb{O}}^{(2)}
= \triangle_\mathbb{H} \cdot e_{2,\mathbb{O}}^{(2)}
= \frac{1}{4}\left(1\otimes1 + e_1\otimes e_1 + e_2\otimes e_2 + e_3\otimes e_3\right)\cdot e_{2,\mathbb{O}}^{(2)},
\end{equation*}
\begin{equation*}
\tau_1
= \square (1\otimes1 - \triangle_\mathbb{H})\cdot e_{2,\mathbb{O}}^{(2)}
= (\square - \triangle_\mathbb{H})\cdot e_{2,\mathbb{O}}^{(2)}
= \frac{1}{4}\left(1\otimes1 + e_1\otimes e_1 - e_2\otimes e_2 - e_3\otimes e_3\right)\cdot e_{2,\mathbb{O}}^{(2)},
\end{equation*}
\begin{equation*}
\tau_2
= (1\otimes1 - \square)\cdot \frac{1}{2}(1\otimes1 + e_2\otimes e_2)\cdot e_{2,\mathbb{O}}^{(2)}
= \frac{1}{4}\left(1\otimes1 - e_1\otimes e_1 + e_2\otimes e_2 - e_3\otimes e_3\right)\cdot e_{2,\mathbb{O}}^{(2)},
\end{equation*}
\begin{equation*}
\tau_3
= (1\otimes1 - \square)\cdot \frac{1}{2}(1\otimes1 - e_2\otimes e_2)\cdot e_{2,\mathbb{O}}^{(2)}
= \frac{1}{4}\left(1\otimes1 - e_1\otimes e_1 - e_2\otimes e_2 + e_3\otimes e_3\right)\cdot e_{2,\mathbb{O}}^{(2)}.
\end{equation*}
\end{example}

\begin{remark}
\label{n2m2rem}
Consider the elements
\begin{equation*}
\begin{aligned}
\rho_1&=\frac{1}{4}(1\otimes1+e_1\otimes e_1+e_2\otimes e_2+e_3\otimes e_3)\cdot e_{2,\mathbb{O}}^{(2)},\\
\rho_2&=\frac{1}{4}(1\otimes1+e_1\otimes e_1+e_4\otimes e_4+e_5\otimes e_5)\cdot e_{2,\mathbb{O}}^{(2)},\\
\rho_3&=\frac{1}{4}(1\otimes1+e_1\otimes e_1+e_6\otimes e_6+e_7\otimes e_7)\cdot e_{2,\mathbb{O}}^{(2)},\\
\rho_4&=\frac{1}{4}(1\otimes1+e_2\otimes e_2+e_4\otimes e_4+e_6\otimes e_6)\cdot e_{2,\mathbb{O}}^{(2)},\\
\rho_5&=\frac{1}{4}(1\otimes1+e_2\otimes e_2+e_5\otimes e_5+e_7\otimes e_7)\cdot e_{2,\mathbb{O}}^{(2)},\\
\rho_6&=\frac{1}{4}(1\otimes1+e_3\otimes e_3+e_4\otimes e_4+e_7\otimes e_7)\cdot e_{2,\mathbb{O}}^{(2)},\\
\rho_7&=\frac{1}{4}(1\otimes1+e_3\otimes e_3+e_5\otimes e_5+e_6\otimes e_6)\cdot e_{2,\mathbb{O}}^{(2)}.
\end{aligned}
\end{equation*}
of $T_2\mathbb{O}=\Sym^2\mathbb{O}\cdot e_{2,\mathbb{O}}^{(2)}$. Their sum
\begin{equation*}
\sum_{i=1}^7 \rho_i
= \frac{1}{4}\Big(7\cdot 1\otimes1 + 3\cdot \sum_{i=1}^7 e_i\otimes e_i\Big)\cdot e_{2,\mathbb{O}}^{(2)}
\stackrel{\eqref{triangle}}{=}
(1\otimes1 + 6\triangle_\mathbb{O})\cdot e_{2,\mathbb{O}}^{(2)}
\stackrel{\mathrm{Prop.\ }\ref{decompositionProp}\,\mathrm{(d)}}{=}
e_{2,\mathbb{O}}^{(2)}.
\end{equation*}
is the unit element of $T_2\mathbb{O}$ and they are orthogonal:
\begin{equation*}
\rho_i\rho_j=\delta_{ij}\rho_i\quad\text{ for }i,j=1,\ldots,7.
\end{equation*}
For example,
\begin{equation*}
\rho_1^2\stackrel{\eqref{triangle}}{=}\big(\triangle_\mathbb{H}\cdot e_{2,\mathbb{O}}^{(2)}\big)^2
=\triangle_\mathbb{H}^2\cdot e_{2,\mathbb{O}}^{(2)}
\stackrel{\eqref{triangleSq}}{=}\triangle_\mathbb{H}\cdot e_{2,\mathbb{O}}^{(2)}=\rho_1
\end{equation*}
and, by the multiplication table \eqref{MultTable},
\begin{equation*}
\begin{aligned}
\rho_1\rho_2
&= \frac{1}{16}\Big(
(1\otimes1+e_1\otimes e_1+e_2\otimes e_2+e_3\otimes e_3)\\
&\quad +(e_1\otimes e_1+1\otimes1+e_3\otimes e_3+e_2\otimes e_2)\\
&\quad +(e_4\otimes e_4+e_5\otimes e_5+e_6\otimes e_6+e_7\otimes e_7)\\
&\quad +(e_5\otimes e_5+e_4\otimes e_4+e_7\otimes e_7+e_6\otimes e_6)
\Big)\cdot e_{2,\mathbb{O}}^{(2)}\\
&\stackrel{\eqref{triangle}}{=}
\triangle_\mathbb{O}\cdot e_{2,\mathbb{O}}^{(2)}
\stackrel{\mathrm{Prop.\ }\ref{decompositionProp}\,\mathrm{(d)}}{=}
0.
\end{aligned}
\end{equation*}
Note that, since
\[
\triangle_\mathbb{O} = \frac{1}{8}\sum_{i=0}^7 e_i\otimes e_i
\quad\text{and}\quad
\triangle_\mathbb{O} \cdot e_{2,\mathbb{O}}^{(2)} = 0,
\]
we have, in the notation of Example~\ref{n2m2example},
\begin{equation*}
\tau_0=\rho_1,\quad
\tau_1=\rho_2+\rho_3,\quad
\tau_2=\rho_4+\rho_5,\quad
\tau_3=\rho_6+\rho_7.
\end{equation*}
Thus, the set $\{\tau_0,\tau_1,\tau_2,\tau_3\}$ is not a maximal set of orthogonal idempotents
in $T_2\mathbb{O}=\Sym^2\mathbb{O}\cdot e_{2,\mathbb{O}}^{(2)}$ that sum to the unit element $e_{2,\mathbb{O}}^{(2)}$ of $T_2\mathbb{O}$.
\end{remark}

\end{document}